\newtheorem{thm}{Theorem}[section]
\newtheorem{cor}[thm]{Corollary}
\newtheorem{prop}[thm]{Proposition}
\newtheorem{lem}[thm]{Lemma}
\newtheorem{conj}[thm]{Conjecture}
\newtheorem{quest}[thm]{Question}
\theoremstyle{definition}
\newtheorem{defn}[thm]{Definition}
\newtheorem{con}[thm]{Construction}
\newtheorem{exmp}[thm]{Example}
\theoremstyle{remark}
\newtheorem{rem}[thm]{Remark}
\newtheorem{warn}[thm]{Warning}
\newcommand{\Z}{\mathbb{Z}}
\newcommand{\ZZ}{\mathbb{Z}/2}
\newcommand{\RR}{\mathbb{R}}
\newcommand{\FF}{\mathbb{F}}
\newcommand{\cag}{\mathcal{G}}
\newcommand{\caf}{\mathcal{F}}
\DeclareMathOperator{\idd}{id}
\DeclareMathOperator{\Cone}{Cone}
\DeclareMathOperator{\CKhI}{CKhI}
\DeclareMathOperator{\KhI}{KhI}
\DeclareMathOperator{\Kh}{Kh}
\DeclareMathOperator{\BN}{BN}
\DeclareMathOperator{\CBN}{CBN}
\DeclareMathOperator{\CBNI}{CBNI}
\DeclareMathOperator{\CKh}{CKh}
\DeclareMathOperator{\HFI}{\widehat{HFI}}
\DeclareMathOperator{\HF}{\widehat{HF}}
\DeclareMathOperator{\Br}{Br}
\title{The flip map and involutions on Khovanov homology}
\author{Daren Chen}
\email{darenc@caltech.edu}
\address{Department of Mathematics, Caltech, Pasadena, CA 91125}
\author{Hongjian Yang}
\date{}
\email{yhj@stanford.edu}
\address{Department of Mathematics, Stanford University, Stanford, CA 94305}
\begin{document}
	
\begin{abstract}
The flip symmetry on knot diagrams induces an involution on Khovanov homology. We prove that this involution is determined by its behavior on unlinks; in particular, it is the identity map when working over $\mathbb{F}_2$. This confirms a folklore conjecture on the triviality of the Viro flip map. As a corollary, we prove that the symmetries on the transvergent and intravergent diagrams of a strongly invertible knot induce the same involution on Khovanov homology. We also apply similar techniques to study the half sweep-around map.
\end{abstract}

\maketitle

\setcounter{tocdepth}{1}
\tableofcontents

\setlength{\parskip}{0.25\baselineskip}

\section{Introduction}
\subsection{The flip symmetry}

The first step of defining Khovanov homology \cite{khovanov2000categorification} is to choose a diagram of the link, which can be compared to the choice of Heegaard diagram when defining Heegaard Floer homology \cite{ozsvath2004holomorphic}. The choices involved here lead to additional symmetries. In the setting of Heegaard Floer homology, Hendricks and Manolescu \cite{hendricks2017involutive} utilized the \textit{conjugation symmetry} on Heegaard diagrams to  define involutive Heegaard Floer homology.  

The present paper explores the \textit{flip symmetry}, an analog of the conjugation symmetry, in the setting of Khovanov homology. Recall that a \textit{diagram} of a link $K$ consists of a representative in the isotopy class of $K$ and a choice of a plane $P$ such that the only singularities of the projection are transverse double points. This plane is supposed to be \textit{oriented} to encode the information of over and under strands. It is clear that if $P$ admits a diagram for $K$, then so does $\overline{P}$, the same plane with the orientation reversed, and one has an obvious way to identify these two planes. This motivates the following definition.

\begin{defn}
Let $D$ be a diagram of $K$. The \textit{flip diagram} $D^*$ is the diagram of $K$ obtained by reflecting $D$ across an axis in the plane, and then reversing all crossings (i.e., switching the undercrossings with overcrossings). Equivalently, $D^*$ is the diagram of $K$ obtained by rotating $K$ in $\RR^3$ around an axis in the plane by $\pi$ and projecting it onto the same plane.
\end{defn}

To our knowledge, Viro first suggested the idea of the flip symmetry, which Lipshitz and Sarkar explored to prove that their Khovanov stable homotopy type is independent of the choice of ladybug matchings \cite[Proposition 6.5]{lipshitz2014khovanov}. 

The conjugation symmetry is a key ingredient in constructing the $\iota$ map on Heegaard Floer chain complexes \cite{hendricks2017involutive}. In our setting, the flip symmetry leads to a similar construction on Khovanov chain complex, as follows. There is a canonical identification between resolutions of $D$ and $D^*$ by simply flipping all the circles in the resolution. This gives a canonical identification between the Khovanov chain complexes of $D$ and $D^*$:\[\eta\colon \CKh(D)\to\CKh(D^*).\]On the other hand, there is a preferred isotopy between $D$ and $D^*$, given by rotating $K$ by $\pi t\,(t\in[0,1])$ around an axis on the plane counterclockwise. After fixing a version of Khovanov homology with full functoriality (e.g. by \cites{jacobsson2004invariant,khovanov2002functor,bar2005khovanov} when working over $\FF=\FF_2$, or by \cite{blanchet2010oriented,sano2021fixing} when working over $\Z$), there is a chain homotopy equivalence \[R\colon \CKh(D^*)\to\CKh(D)\]induced by a sequence of Reidemeister moves that presents the link cobordism given by the trace of the preferred isotopy, well-defined up to chain homotopy. While the flip map can be defined over $\Z$, we work with $\FF=\FF_2$ coefficients throughout this paper, unless otherwise specified. 

\begin{defn}
	Let $D$ be a diagram of $K$. The \textit{flip map} is defined as the composition of the two maps discussed above: \[\kappa\colon\CKh(D)\xrightarrow{\eta}\CKh(D^*)\xrightarrow{R}\CKh(D).\]
\end{defn}

Figure \ref{fig trefoil kappa} provides an example of the flip map on the right-handed trefoil. An explicit sequence of Reidemeister moves that realizes the trace of the rotation for the unknot is given in Figure \ref{fig unknot kappa}.

    \begin{figure}[ht]
	\[{
		\fontsize{8pt}{10pt}\selectfont
		\def\svgscale{1}
		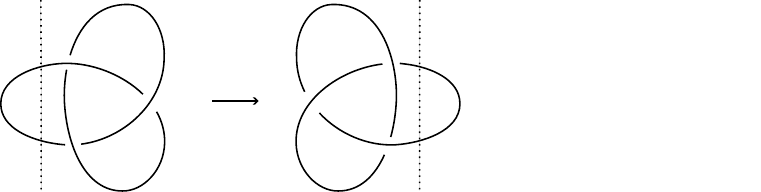
	}\]
	\caption{The flip map on the right-handed trefoil. The dashed line is the rotational axis. The labels $1^*,2^*,3^*$ are the crossings corresponding to $1,2,3$ respectively under the map $\eta$.}
    \label{fig trefoil kappa}
\end{figure}

\begin{warn}
    At first glance, the first and the second diagrams in Figure \ref{fig trefoil kappa} seem to be the same (up to planar isotopy), reflecting the fact that they are transvergent diagrams of a strongly invertible knot; see Section \ref{sec: si knots} for further discussion. However, one cannot simply use a planar isotopy as the required Reidemeister moves in the definition of $R$, since it represents a different link cobordism (cf. \cite{jacobsson2004invariant}). 
\end{warn}

Additional information from the $\iota$ map in Heegaard Floer homology has led to remarkable topological applications, for example,  \cite{dai2023infinite,dai20242}. On the other hand, a folklore conjecture asserts that the flip map on Khovanov homology (at least with $\FF_2$ coefficients) is trivial. Our main theorem confirms this conjecture.

\begin{thm}\label{thm kappa=id}
	The flip map $\kappa\colon\CKh(D)\to\CKh(D)$ is chain homotopic to the identity map on $\CKh(D)$.
\end{thm}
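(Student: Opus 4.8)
The plan is to exhibit the family of flip maps $\{\kappa_D\}$ as a natural endomorphism of the Khovanov functor which is the identity on unlink diagrams, and then to bootstrap from unlinks to arbitrary diagrams by induction on the number of crossings. One convenient first reduction: over $\FF_2$ each $\CKh(D)$ is a complex of finite-dimensional vector spaces and $\kappa$ is grading-preserving, so $\kappa_D$ is chain homotopic to $\idd$ as soon as it acts as the identity on $\Kh(D;\FF_2)$; hence it suffices to control $\kappa$ on homology, although the Bar--Natan cobordism model lets one keep track of explicit homotopies where useful.

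The key structural input is naturality. I would show that $\kappa$ is compatible, up to chain homotopy, both with resolving a crossing and with every link-cobordism map. For the combinatorial identification $\eta$ this is essentially a matter of definition: flipping all the circles in a resolution is compatible with merges, splits, dottings, and with passing to a subcube of resolutions. For $R$ it follows from functoriality of Khovanov homology: the $\pi$-rotation of $\RR^3$ is an ambient isotopy, so conjugating a cobordism (or a partial resolution, or a Reidemeister movie) by the rotation gives an isotopic cobordism, hence an equal map up to homotopy. Consequently the collection $\{\kappa_D\}$ commutes with the three maps in every unoriented skein exact triangle and, more generally, with all cobordism maps.

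For the base case, let $D$ be a diagram of an $m$-component unlink, drawn so that the rotation axis misses all the circles and is in generic position; then $\eta$ introduces no permutation and is the identity after identifying each circle with its mirror image, so $\kappa_D$ is a composite of ``turn a round circle over'' annular cobordism maps, one per component. Each of these is a $q$-graded, homotopy-invertible endomorphism of $\Kh(U;\FF_2)\cong\FF_2[X]/(X^2)$ whose two generators lie in distinct $q$-degrees, hence it is the identity; the explicit Reidemeister movie of Figure \ref{fig unknot kappa} moreover exhibits the chain-level null-homotopy. For the inductive step, given $D$ with $n+1$ crossings I would pick a crossing $c$ and write $\CKh(D)\cong\Cone\bigl(\CKh(D_0)\xrightarrow{s}\CKh(D_1)\bigr)$, where $D_0,D_1$ are the resolutions of $c$ (with $n$ crossings each) and $s$ is the saddle map. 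By naturality $\kappa$ commutes up to homotopy with $s$ and with the structural maps of the cone, and by induction $\kappa_{D_0}\simeq\idd$ and $\kappa_{D_1}\simeq\idd$; feeding this into the long exact sequence of the cone should force $\kappa_* = \idd$ on $\Kh(D;\FF_2)$, and then $\kappa_D\simeq\idd$ by the reduction above.

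The hard part is precisely this last inductive step. Knowing that $\kappa$ is the identity on the two resolutions and commutes with the skein triangle is not quite enough: the long exact sequence leaves a potential off-diagonal (extension-class) ambiguity, which even using that $\kappa_*$ is an involution — so that $\kappa_*+\idd$ is square-zero and dies after one application of a structural map — does not immediately kill. Closing this gap honestly requires upgrading $\kappa_D$ to a strict map of mapping cones built out of $\kappa_{D_0}$ and $\kappa_{D_1}$ together with a coherent choice of the secondary homotopy realizing $s\kappa_{D_0}\simeq\kappa_{D_1}s$ — equivalently, showing that the relevant cross-block of the rotation map on $\Cone(s)$ is null-homotopic — so that the inductive null-homotopies can be glued. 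This coherent bookkeeping, which ultimately rests on the structure of the rotation cobordism map, is where the real work lies; everything else is either formal or the explicit unknot computation of Figure \ref{fig unknot kappa}.
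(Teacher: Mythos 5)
Your overall philosophy (reduce the computation of $\kappa$ to its behavior on unlinks via the cube of resolutions and naturality) matches the paper's, and your base case agrees with Example \ref{exmp unlink kappa}. But your mechanism for getting from unlinks to general diagrams --- induction one crossing at a time through the mapping cone/skein triangle --- has a genuine gap, and it is exactly the one you name in your last paragraph without closing. Knowing $\kappa_{D_0}\simeq\idd$, $\kappa_{D_1}\simeq\idd$, and $s\kappa_{D_0}\simeq\kappa_{D_1}s$ only determines $\kappa_D$ on $\Cone(s)$ up to an off-diagonal chain map $\CKh(D_0)\to\CKh(D_1)$ (suitably shifted), i.e.\ up to the choice of the secondary homotopy, and neither the long exact sequence, the quantum grading, nor the involution property $\kappa^2\simeq\idd$ forces that block to be null-homotopic. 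Your reduction to homology over $\FF_2$ is correct but does not help here, since the ambiguity lives precisely in the extension data that homology of the two resolutions does not see. So as written the inductive step does not go through; the ``coherent bookkeeping'' you defer is the entire content of the theorem.

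The paper closes this gap by a different, global mechanism rather than a crossing-by-crossing induction. It decomposes $D$ as a plat closure into elementary tangles, realizes the flip cobordism by inserting canceling half twists $\Delta_{2k}\circ\Delta_{2k}^{-1}$ between \emph{all} pieces simultaneously (Construction \ref{con: flip map}), and then proves the key Lemma \ref{lem algebraic flipping of an elementary tangle}: using the rigidity of invertible tangle complexes (Lemma \ref{lem:rigidity of tangle invariants}) together with homological perturbation along the strong deformation retracts coming from the crossing-reducing Reidemeister moves, the flip map is chain homotopic to a map $\kappa'$ that is \emph{filtered} with respect to the full $\{0,1\}^n$ cubical grading, with diagonal blocks equal to flip maps of crossingless resolutions. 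The decisive step --- which has no counterpart in your proposal --- is that $\kappa$ preserves the homological grading, and the homological grading is the sum of the cube coordinates up to a shift; hence the filtered representative $\kappa'$ must strictly preserve the cubical grading, so every off-diagonal block (exactly the ambiguity your induction cannot control) vanishes, and $\kappa'$ is the identity by the unlink computation. This filtration-plus-degree argument requires controlling all $n$ crossings at once via the tangle-level rigidity statement; it cannot be assembled from homotopy-commutation with one saddle at a time, which is why your induction stalls where it does.
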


A corollary of Theorem \ref{thm kappa=id} is that involutive Khovanov homology, defined as an analog of involutive Heegaard Floer homology in Section \ref{sec:KhI}, does not provide new information than the ordinary Khovanov homology; see Corollary \ref{coro: KhI trivial}.

\subsection{Algebraic and topological identifications of Khovanov chain complexes}

The study of the flip map essentially boils down to the question of comparing the maps $\eta$ and $R^{-1}$ from $\CKh(D)$ to $\CKh(D^*)$. In general, when we have two diagrams $D$ and $D'$ of the same link $K$ that are related by certain planar constructions, such as flipping the diagram, or Constructions \ref{cons sweep around} and \ref{cons braid rotation} below, there are typically two approaches to identity the chain complexes $\CKh(D)$ and $\CKh(D')$:

\begin{itemize}
    \item The \textit{algebraic} identification: depending on the construction of $D'$, there would be an obvious one-to-one correspondence between crossings in $D$ and $D'$, and circles in the corresponding resolutions. This induces a map between $\CKh(D)$ and $\CKh(D')$ by simply identifying generators with the ones in the corresponding resolutions.

    \item The \textit{topological} identification: depending on the construction of $D'$, there would be a preferred cobordism between $D$ and $D'$. This cobordism induces a map between $\CKh(D)$ and $\CKh(D')$ by presenting the cobordism as a movie presentation between $D$ and $D'$. It is only well-defined up to chain homotopy.
\end{itemize}

For the flip construction, $\eta$ is the algebraic identification, and $R^{-1}$ is the topological identification.

\begin{con}\label{cons sweep around}
    Let $T$ be the diagram of a $1$-$1$ tangle, presented vertically. We have two choices to close it up on the plane: from the left and from the right. These two closures are related by a preferred cobordism formed by the trace of rotating the strand from one side to the other\footnote{Technically speaking, there are two choices: sweep from the top or from the bottom; we just choose (and fix thereafter) one of them. A posteriori, \cite{morrison2022invariants}*{Theorem~1.1} asserts that these two cobordism maps are chain homotopic. Theorem \ref{thm alg=top 11 tangle} does not rely on their theorem.}; see the left part of Figure \ref{fig:braid flip}. The topological identification in this case is called the \textit{half sweep-around map}, which was used by Morrison, Walker, and Wedrich to prove the functoriality of Khovanov homology for links in $S^3$ \cite{morrison2022invariants}.
\end{con}

\begin{con}\label{cons braid rotation}
    Let $\Br_n$ be the braid group on $n$ strands with standard generators $\sigma_i$ ($1\le i\le n-1$). There is an involution on $\Br_n$ given by sending $\sigma_i^{\pm 1}$ to $\sigma_{n-i}^{\pm 1}$ ($1\le i\le n-1$). For $\beta\in \Br_n$, we define its \textit{flip braid} $\beta^*$ as the image of $\beta$ under this involution. The closures $\widehat{\beta}$ and $\widehat{\beta^*}$ give two diagrams of the same link. They are related by a ``rolling the braid'' cobordism; see the right part of Figure \ref{fig:braid flip}. 
\end{con}

\begin{figure}[h!]
	\[{
		\fontsize{8pt}{10pt}\selectfont
		\def\svgscale{1.2}
		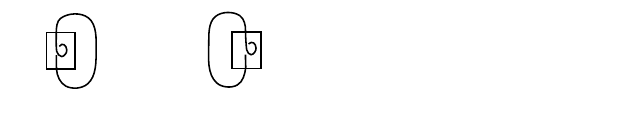
	}\]
	\caption{Left: maps between two closures of a $1$-$1$ tangle (left). Right: maps from $\widehat{\beta}$ to $\widehat{\beta^*}$.}
	\label{fig:braid flip}
\end{figure}

The algebraic identification is generally much easier to compute than the topological identification since the latter involves an unspecified sequence of Reidemeister moves, which is often difficult to describe explicitly. On the other hand, the topological identification may have better available functoriality statements. Heuristically, these two identifications should coincide for suitable constructions. The following theorems make this heuristic rigorous. The first one reformulates Theorem \ref{thm kappa=id}. 

\setcounter{section}{1}
\setcounter{thm}{3}
\begin{thm}
      The algebraic and topological identifications between $\CKh(D)$ and $\CKh(D^*)$ are chain homotopic. 
\end{thm}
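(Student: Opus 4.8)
\emph{Reformulation and overall strategy.} The statement is a rephrasing of Theorem~\ref{thm kappa=id}: since $\kappa=R\circ\eta$, asserting that $\eta$ and $R^{-1}$ are chain homotopic is the same as asserting $\kappa\simeq\mathrm{id}_{\CKh(D)}$, i.e. $R\simeq\eta^{-1}$. I would prove this in two stages. First, a reduction: the comparison $R\simeq\eta^{-1}$ for an arbitrary diagram should follow, by a naturality-plus-induction argument, from the same comparison for crossingless diagrams. Second, dispose of the crossingless case directly over $\FF_2$. The first stage is where essentially all the work lies, and it is also the precise meaning of the slogan that $\kappa$ is ``determined by its behaviour on unlinks.''

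\emph{The crossingless case.} If $D_0$ is a disjoint union of round circles (an unlink diagram), then the rotation-by-$\pi$ cobordism defining $R$ sits in $\RR^3\times[0,1]$ as the image of $D_0\times[0,1]$ under the family of ambient rotations, hence is a trivial cylinder: a disjoint union of annuli, each running from a circle of $D_0$ to its mirror image across the axis. Over $\FF_2$ the cobordism map of such a product is the tautological identification of the two ends induced by the isotopy, which is exactly $\eta^{-1}$; therefore $\kappa\simeq\mathrm{id}$ on every unlink diagram. (For a one-circle diagram this is even forced: $\CKh$ has zero differential, is supported in a single homological degree, and $\kappa$ preserves the quantum grading, so $\kappa=\mathrm{id}$ for coefficient reasons. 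Over $\Z$ one would still have to check that the Reidemeister moves of Figure~\ref{fig unknot kappa} contribute no sign, which is the reason the $\FF_2$ hypothesis is convenient.)

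\emph{The reduction.} Present $K$ as the closure $\widehat T$ of a $1$-$1$ tangle $T$, with the closure arc and the flip axis arranged so that the flip of $D=\widehat T$ is, up to planar isotopy, assembled from (a) an internal flip of $T$ and (b) sliding the closure arc past the diagram, i.e.\ the half sweep-around of Construction~\ref{cons sweep around}. One then inducts on the crossing number of $T$. The inputs for the inductive step are: that both $\eta$ and $R$ are natural up to chain homotopy under introducing a crossing and under the Reidemeister moves that move it into standard position (each checked on a finite list of local models, using the invariance of the functorial Khovanov theories cited in the introduction); and the comparison of algebraic and topological identifications for $1$-$1$-tangle closures, Theorem~\ref{thm alg=top 11 tangle}. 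Composing through the induction rewrites $R$ as a composite of algebraic identifications, and a direct bookkeeping check — tracking how the circles of every resolution get relabeled at each step — identifies that composite with $\eta^{-1}$. The base case, $T$ crossingless, is the unlink computation above.

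\emph{Main obstacle.} The heart of the matter is organizing the reduction so that the induction closes: one must choose a movie presentation of the $\pi$-rotation disciplined enough to be decomposed into finitely many Reidemeister moves together with one sweep-around, and then verify compatibility of the combinatorial identification $\eta$ with every move in that string. This is precisely the content packaged in Theorem~\ref{thm alg=top 11 tangle}, whose own proof has the same ``peel off one crossing and reduce to an unlink'' shape. Once that $1$-$1$-tangle comparison is in hand, the flip statement is a formal consequence of it and the crossingless case.
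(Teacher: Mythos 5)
Your reformulation and your crossingless case are fine (they match the paper's Example \ref{exmp unlink kappa} in substance), but the reduction step --- which you yourself identify as where all the work lies --- has a genuine gap, in two ways. First, it is circular relative to the logical structure of the paper: you invoke Theorem \ref{thm alg=top 11 tangle} (the half sweep-around comparison) as the engine of the induction, but in the paper that theorem is \emph{deduced from} Theorem \ref{thm kappa=id}, via the decomposition $\kappa_{l(T)}\simeq\kappa^{sw}_{l(T)}\circ\kappa^{roll}_{l(T)}$ together with the triviality of the rolling map; you do not supply an independent proof of the sweep-around statement, only the remark that it ``has the same shape,'' so nothing in your outline actually discharges the hard step. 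Second, and more fundamentally, the proposed ``naturality-plus-induction on the crossing number'' does not engage with the real obstruction: the topological identification $R$ is induced by an unspecified sequence of Reidemeister moves, including Reidemeister III moves that move the crossings of $D$ around, so there is no a priori reason it respects the cube-of-resolutions structure; and removing or resolving a crossing changes the link type, so there is no comparison map between the two flip maps for ``naturality under introducing a crossing'' to even be asserted about. Making the induction close is exactly the problem of homotoping $R$ to a map that is filtered with respect to the $\{0,1\}^n$-cubical grading, with filtered pieces equal to the flip maps of the resolutions.

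The paper's solution to that problem is its key technical input, Lemma \ref{lem algebraic flipping of an elementary tangle}, and it is entirely absent from your proposal: one decomposes $D$ as a plat closure into single-crossing tangles, inserts canceling half twists $\Delta_{2k}\circ\Delta_{2k}^{-1}$ between them so that the flip of each crossing is realized by a local cobordism $\Delta_{2k}^{-1}\circ T^*\circ\Delta_{2k}\to T$, and then uses the rigidity of invertible (braid-like) tangle complexes (Lemma \ref{lem:rigidity of tangle invariants}) together with homological perturbation along very strong deformation retracts to replace the Reidemeister-move-induced map by a filtered one whose associated graded pieces are the flip maps of the crossingless resolutions. Only after that does the degree argument (homological grading equals the sum of cubical gradings up to a shift, so a filtered degree-zero map preserves the cubical grading) reduce everything to the unlink computation. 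Without this mechanism --- or an independent proof of the sweep-around and rolling comparisons by some other means --- your outline reduces the theorem to a statement that is at least as hard as the theorem itself.
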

  \setcounter{section}{1}
\setcounter{thm}{6}

For Constructions \ref{cons sweep around} and \ref{cons braid rotation}, we have similar results.

\begin{thm}\label{thm alg=top 11 tangle}
    The algebraic and topological identifications between the Khovanov chain complexes of the left and right closures of a $1$-$1$ tangle diagram are chain homotopic. 
\end{thm}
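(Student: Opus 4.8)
The plan is to show that both $\eta$ and the half sweep-around map---call it $\Phi\colon \CKh(l(T))\to\CKh(r(T))$---are the image of the identity under a single ``closure'' functor applied to the tangle complex of $T$, so that they are chain homotopic (indeed equal, up to the homotopies inherent in the functorial setup). Work with $\FF_2$ coefficients and a tangle-local version of the invariant (Bar-Natan's dotted cobordism category, or the refinement of \cite{blanchet2010oriented,sano2021fixing}). Put $T$ inside a disk $B\subset S^2$, so that $\CKh(T)$ is a complex over the Bar-Natan category $\mathsf{BN}(\partial B)$ attached to the two boundary points of $B$; after delooping the closed circles appearing in resolutions, $\CKh(T)$ is a complex of shifted copies of the unique crossingless tangle $\mathbf 1$ (the straight arc), with differential assembled from dotted identity cobordisms. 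Closing $T$ up along an arc in the complementary disk $D=S^2\setminus\operatorname{int}(B)$ is the application of the functor $F$ sending $\mathbf 1$ to the Frobenius algebra $A=\FF_2[X]/(X^2)$ and a $k$-dotted identity cobordism to $X^k$; the first point is that this functor does not depend on which side of $B$ the closure arc runs along, so it identifies $\CKh(l(T))$ with $\CKh(r(T))$, and under this identification $\eta$ is the identity.

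Next I would use locality. The key geometric observation is that on $S^2$ the left and right closures differ by an isotopy of the closure arc within the disk $D$ rel $\partial B$, keeping $T$ pointwise fixed (arcs in a disk joining two fixed boundary points form a single isotopy class); this isotopy, swept over $S^2$, traces out precisely the half sweep-around cobordism. By tangle-local functoriality, the induced map $\Phi$ is then $F(s)$, where $s$ is the automorphism of $\mathbf 1$ (equivalently, of $\CKh$ of the trivial $1$-$1$ tangle) induced by sweeping the closure arc from one side of $B$ to the other. So it suffices to prove the base case $s=\idd_{\mathbf 1}$.

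For the base case, the trace of the arc's sweep isotopy is a properly embedded disk in the $3$-ball $D\times[0,1]$; after the canonical identification of the two boundary arcs (both equal to $\mathbf 1$), its boundary agrees with that of the identity cobordism $\mathbf 1\times[0,1]$. Since any two properly embedded disks in a $3$-ball with a common boundary are isotopic rel boundary, the sweep cobordism equals the identity cobordism in $\mathsf{BN}$, so $s=\idd_{\mathbf 1}$. (For the weaker homology-level statement one may instead note that $\Phi$ is induced by an annular cobordism, hence is a degree-preserving $A$-module automorphism of $\Kh$ of the unknot, which over $\FF_2$ forces the identity; chain-level formality over $\FF_2$ then upgrades this.) Applying $F$ gives $\Phi=F(s)=\idd$, which agrees with $\eta$, completing the proof.

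The step I expect to be the main obstacle is making the locality of the second paragraph precise: one must verify that the half sweep-around map, defined via a movie of Reidemeister moves of the entire closed diagram, genuinely coincides with $F(s)$---i.e.\ that the moves pushing the closure arc past the crossings and circles of $T$ contribute only the canonical homotopy equivalences and never interact with the $\CKh(T)$ factor. A secondary nuisance is ensuring that delooping is performed identically on the two sides, so that $F$ is literally side-independent rather than merely side-independent up to a further homotopy. Once these are arranged the argument is formal; alternatively, paralleling the proof of Theorem~\ref{thm kappa=id}, one can induct on the number of crossings of $T$ via the mapping cone decomposition at a crossing $c$, which both $\eta$ and $\Phi$ respect because the sweep isotopy can be chosen to avoid $c$, with the trivial-arc computation above serving as the base case.
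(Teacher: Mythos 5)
There is a genuine gap, and it is exactly at the step you yourself flag as ``the main obstacle.'' Your argument hinges on the claim that the half sweep-around map $\Phi$ is local, i.e.\ that it equals $F(s)$ for an automorphism $s$ of the trivial arc, because the left and right closure arcs are isotopic rel endpoints inside the complementary disk $D\subset S^2$ while $T$ stays pointwise fixed. But that isotopy only exists on the sphere: in the plane, the two closure arcs are \emph{not} isotopic in the complement of the tangle disk, and the map $R^{sw}$ is by definition induced by a movie of planar Reidemeister moves in which the swept strand passes over every strand and crossing of $T$ (R2 and R3 moves involving the crossings of $T$). The functoriality actually available (Khovanov's tangle-cobordism functoriality, Lemma \ref{lem:functoriality of tangle invariants}, which is for cobordisms in $\mathbb{R}^2\times[0,1]\times[0,1]$) therefore does not let you conclude that $\Phi$ acts as the identity on the $\CKh(T)$ factor, nor even that it respects the cube/cone filtration at the crossings of $T$; asserting locality ``on $S^2$'' is essentially assuming sphere-level (equivalently $S^3$-level) functoriality, which is what Corollary \ref{cor: functoriality in S3} is supposed to \emph{follow} from this theorem, so the appeal is circular unless you import \cite{morrison2022invariants} wholesale. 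For the same reason your fallback induction is not available as stated: the sweep isotopy cannot be chosen to avoid a crossing $c$ of $T$ in the projection, so it is not automatic that $\Phi$ respects the mapping-cone decomposition at $c$. Establishing precisely this filtered behavior, up to homotopy, is the hard technical content, and it is what the paper supplies via Lemma \ref{lem algebraic flipping of an elementary tangle} (insertion of canceling half twists, rigidity of invertible tangle complexes, and homological perturbation), together with the homological-degree argument that upgrades ``filtered'' to ``cubical-grading preserving.''

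The paper's actual route is different from yours and avoids the issue: it writes the flip map as the composition of the half sweep-around map and a ``rolling'' map (Lemma \ref{lem: flip=sw+roll}), proves the rolling map is homotopic to the identity by the same half-twist/rigidity/filtration argument as Theorem \ref{thm kappa=id} (Proposition \ref{prop:rolling is trivial}), and then quotes Theorem \ref{thm kappa=id} for the flip map. Your delooping description of $\eta$ and your base-case computation for the trivial arc are fine (the base case can in any event be done by a direct Reidemeister computation as in Example \ref{exmp unknot kappa}), but without a substitute for the filtration/rigidity step the reduction to that base case does not go through.
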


\begin{thm}\label{thm alg=top braid}
     The algebraic and topological identifications between the Khovanov chain complexes of the closures of a braid and its flip braid are chain homotopic. 
\end{thm}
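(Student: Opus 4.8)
The plan is to reduce Theorem~\ref{thm alg=top braid} to Theorem~\ref{thm kappa=id} and Theorem~\ref{thm alg=top 11 tangle} by decomposing the ``rolling the braid'' isotopy. Draw $\widehat{\beta}$ in the standard way, with the braid word $\beta$ inside a rectangular box and the $n$ closure strands running around one side of the box, and let $\ell$ be the vertical axis bisecting the box. I claim that the rolling isotopy of Construction~\ref{cons braid rotation}, viewed as a link cobordism, is the concatenation of two pieces: first the $\pi$-rotation of the \emph{entire} diagram about $\ell$ — this is the preferred flip isotopy of the Introduction (rotation by $\pi t$, $t\in[0,1]$) and it turns $\widehat{\beta}$ into $(\widehat\beta)^*$ — and then an isotopy carrying the bundle of $n$ closure strands from one side of the box back to the other. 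Reading off $(\widehat\beta)^*$: reflection across $\ell$ sends the $j$-th letter $\sigma_i^{\pm 1}$ of $\beta$ to $\sigma_{n-i}^{\mp 1}$, and the subsequent reversal of all crossings restores the exponents, so the box now contains $\beta^*$, while the (crossingless) closure strands have merely been routed to the opposite side. Thus $(\widehat\beta)^*$ is a diagram of $\widehat{\beta^*}$ differing from the standard one only in the side on which the closure runs, and the second piece of the concatenation is exactly a sequence of half sweep-around moves.

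Granting this decomposition, the two identifications factor accordingly. For the first piece, Theorem~\ref{thm kappa=id} — in its equivalent formulation that the algebraic identification $\eta$ and the topological identification $R^{-1}$ between $\CKh(D)$ and $\CKh(D^*)$ are chain homotopic — applies with $D=\widehat\beta$; and under the identification $D^* = (\widehat\beta)^*$ above, the map $\eta$ (flip every circle of every resolution across $\ell$) is precisely the algebraic crossing-and-circle correspondence of Construction~\ref{cons braid rotation}. For the second piece, move the closure strands one at a time: once all but one have been carried over, the remainder of the diagram is a $1$-$1$ tangle and the move is its half sweep-around, so Theorem~\ref{thm alg=top 11 tangle} identifies the topological identification of that move with its algebraic identification up to chain homotopy. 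Splitting the topological identification $\CKh(\widehat\beta)\to\CKh(\widehat{\beta^*})$ of Construction~\ref{cons braid rotation} along the concatenation (by functoriality of the fixed Khovanov homology theory) and splicing the homotopies (by naturality), one concludes that it is chain homotopic to the composite of the corresponding algebraic identifications, which is the algebraic identification of Construction~\ref{cons braid rotation}.

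The main obstacle is the geometric claim of the first paragraph: one must verify, from the explicit isotopies, that ``rolling the braid'' really is the flip isotopy followed by the sweep isotopy \emph{as link cobordisms} — that is, that their traces are ambient isotopic rel boundary, so the induced maps on $\CKh$ agree up to chain homotopy — and in particular that the bookkeeping introduces no stray twist or extra wrapping of the closure strands. The remaining points are routine: the sign-and-position check identifying $\eta$ with the algebraic correspondence of Construction~\ref{cons braid rotation}, the verification that the composite of all the algebraic identifications in the decomposition is again an algebraic identification in the sense of Construction~\ref{cons braid rotation}, and the naturality lemmas needed to concatenate chain homotopies along a composite of cobordisms and Reidemeister moves.

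Finally, I would record a shortcut that bypasses the sweep-around moves: if one first isotopes $\widehat\beta$ so that the $n$ closure strands are routed symmetrically about $\ell$ (running over the top of the box), then the $\pi$-rotation about $\ell$ carries this diagram directly to the corresponding symmetric diagram of $\widehat{\beta^*}$, so that the rolling cobordism \emph{is} the flip cobordism and Theorem~\ref{thm kappa=id} applies verbatim. This is cleaner but trades the decomposition for a check that such a symmetric closure is available and that the rotation of it reproduces exactly the cobordism of Construction~\ref{cons braid rotation}.
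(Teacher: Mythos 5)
Your proposal is correct and is essentially the paper's own argument: the paper proves Theorem \ref{thm alg=top braid} in one line by viewing the braid rolling map as the composition of the flip map and $k$ half sweep-around maps (one per strand) and invoking Theorems \ref{thm kappa=id} and \ref{thm alg=top 11 tangle}. Your write-up simply makes explicit the geometric decomposition and the naturality/splicing bookkeeping that the paper leaves to the reader.
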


A corollary of Theorem \ref{thm alg=top 11 tangle} is the functoriality of Khovanov homology (with $\FF_2$ coefficients) for links in $S^3$, originally established in \cite{morrison2022invariants} in much more generality; see Corollary \ref{cor: functoriality in S3}.

\subsection{A corollary for strongly invertible knots}

The triviality results in Theorems \ref{thm kappa=id}, \ref{thm alg=top 11 tangle}, and \ref{thm alg=top braid} are particularly powerful when one hopes to combine the advantages of both algebraic and topological identifications: the former is more computable whereas the latter usually has better functoriality. We illustrate this by an interesting implication for strongly invertible knots. 

Recall that a \textit{strongly invertible knot} is a knot $K$ with an orientation-preserving involution $\tau$ of $S^3$ that preserves $K$ (setwise). The fixed point set of $\tau$, or the \textit{axis}, is necessarily an unknotted $S^1$ by the resolution of the Smale Conjecture. The axis is required to intersect $K$ at two points. After removing one point from the axis, one can view $\tau$ as rotating by $\pi$ around the axis in $\RR^3$. A \textit{symmetric diagram} is a choice of the projection plane that is fixed by $\tau$. There are two types of symmetric diagrams for a strongly invertible knot: the \textit{intravergent diagram}, where the axis is perpendicular to the projection plane, and the \textit{transvergent diagram}, where the axis is on the projection plane. See Figure \ref{fig two diagrams for si knots} for illustration.

    \begin{figure}[ht]
	\[{
		\fontsize{8pt}{10pt}\selectfont
		\def\svgscale{1}
		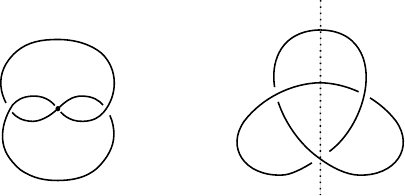
	}\]
	\caption{Intravergent (left) and transvergent (right) diagrams of the right-handed trefoil. The axis is indicated by a dot in the intravergent diagram and by a dashed line in the transvergent diagram.}
    \label{fig two diagrams for si knots}
\end{figure}

 Given a symmetric diagram of $K$, the strong inversion $\tau$ induces an involution on the corresponding Khovanov chain complex by applying $\tau$ to the cube of resolutions. Two types of symmetric diagrams induce two involutions on the Khovanov homology of $K$. It has remained an open problem whether these involutions coincide. Our next result provides an affirmative answer; see Theorem \ref{thm si knots explicit ver} for the explicit statement.

\begin{thm}\label{thm strg invertible same}
	Let $K$ be a strongly invertible knot. Then the involutions induced from the intravergent diagram and the transvergent diagram of $K$ on $\Kh(K)$ coincide.
\end{thm}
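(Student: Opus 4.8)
The plan is to produce a single chain homotopy equivalence between the Khovanov complexes of a transvergent and an intravergent diagram of $K$ that conjugates the transvergent algebraic involution into the intravergent one, using Theorem~\ref{thm kappa=id} as the engine.

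\emph{A geometric model.} Realize the strong inversion as the $\pi$-rotation $\tau$ of $\RR^3$ about the $z$-axis, with $K$ a $\tau$-invariant embedding meeting the axis in its two fixed points; after an equivariant isotopy we may assume $K$ misses the $x$-axis. For $\theta\in\RR$ let $K_\theta=R_x(\theta)K$ denote $K$ rotated by $\theta$ about the $x$-axis, and let $D_\theta$ be its projection to the $xz$-plane, so that $D_0$ is transvergent and $D_{\pi/2}$ is intravergent. The salient features are: $\tau$ preserves the $xz$-plane and acts there as the flip operation $\ast$ across the line $\{x=0\}$ (reflection together with a global crossing change); $\tau(K_\theta)=K_{-\theta}$, hence $\tau(D_\theta)=D_{-\theta}$, for every $\theta$; and the $x$-axis is disjoint from every $K_\theta$. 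In particular $D_0^{\ast}=D_0$ and $D_{\pi/2}^{\ast}=D_{-\pi/2}$.

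\emph{The two involutions, as compositions of algebraic flip identifications.} Because $\tau$ acts on $D_0$ precisely as $\ast$, the algebraic involution coming from the transvergent diagram is the algebraic flip identification $\eta_{D_0}\colon\CKh(D_0)\to\CKh(D_0^{\ast})=\CKh(D_0)$, which is an involution since $\eta_{D_0}^{-1}=\eta_{D_0}$. For the intravergent diagram the relevant symmetry is $R_x(\tfrac\pi2)\,\tau\,R_x(-\tfrac\pi2)$, the $\pi$-rotation about the $y$-axis, which acts on $D_{\pi/2}$ as the $\pi$-rotation $\rho$ of the plane about the origin with crossings preserved, so that involution is $\rho_*$. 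Writing $\ast'$ for the flip across the perpendicular line $\{z=0\}$, one has $\ast'\!\circ\ast=\rho$ as planar operations; since $D_{\pi/2}$ is $\rho$-symmetric we get $D_{\pi/2}^{\ast}=D_{\pi/2}^{\ast'}=D_{-\pi/2}$, and comparing cubes of resolutions yields, on the chain level, the identity $\rho_*=\eta'_{D_{-\pi/2}}\circ\eta_{D_{\pi/2}}$, where $\eta_{D_{\pi/2}}\colon\CKh(D_{\pi/2})\to\CKh(D_{-\pi/2})$ is the $\ast$-flip identification and $\eta'_{D_{-\pi/2}}\colon\CKh(D_{-\pi/2})\to\CKh(D_{\pi/2})$ is the $\ast'$-flip identification.

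\emph{The comparison.} Let $R\colon\CKh(D_0)\to\CKh(D_{\pi/2})$ be induced by any movie realizing the isotopy $\theta\colon 0\to\tfrac\pi2$; by functoriality $R$ is the canonical identification of $\Kh(K)$ as computed from the two diagrams, so it suffices to show $R\circ\eta_{D_0}\circ R^{-1}\simeq\rho_*$. Applying the diffeomorphism $\tau$ to this movie produces, since $\tau(K_\theta)=K_{-\theta}$, a movie realizing $\theta\colon 0\to-\tfrac\pi2$; and since $\tau$ acts on each frame as the flip $\ast$, naturality of the flip on cobordism maps gives $R_{0\to-\pi/2}=\eta_{D_{\pi/2}}\circ R\circ\eta_{D_0}$. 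Concatenating the reverse of this movie with the first one, the resulting map $R_{-\pi/2\to\pi/2}:=R\circ R_{0\to-\pi/2}^{-1}$ realizing $\theta\colon -\tfrac\pi2\to\tfrac\pi2$ equals $R\circ\eta_{D_0}\circ R^{-1}\circ\eta_{D_{\pi/2}}^{-1}$ (using $\eta_{D_0}^{-1}=\eta_{D_0}$). On the other hand, this movie is a $\pi$-rotation of $K_{-\pi/2}$ about the $x$-axis, which lies in the projection plane, and it carries $D_{-\pi/2}$ to $D_{-\pi/2}^{\ast'}=D_{\pi/2}$; up to the choice of rotation direction --- immaterial by functoriality for links in $S^3$ (the corollary of Theorem~\ref{thm alg=top 11 tangle}), since the two directions differ by a full $2\pi$-rotation about an axis disjoint from the knot --- it realizes the preferred isotopy underlying the flip map of $D_{-\pi/2}$ with respect to $\{z=0\}$. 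Hence Theorem~\ref{thm kappa=id} applied to $D_{-\pi/2}$ gives $R_{-\pi/2\to\pi/2}\simeq\eta'_{D_{-\pi/2}}$. Combining, $R\circ\eta_{D_0}\circ R^{-1}=R_{-\pi/2\to\pi/2}\circ\eta_{D_{\pi/2}}\simeq\eta'_{D_{-\pi/2}}\circ\eta_{D_{\pi/2}}=\rho_*$; since $R$ is the canonical identification, this is exactly the statement that the transvergent and intravergent involutions agree on $\Kh(K)$.

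\emph{The main obstacle.} The crux is identifying the concatenated movie $R_{-\pi/2\to\pi/2}$ with the topological flip identification of $D_{-\pi/2}$: this requires careful bookkeeping of the rotation axis and direction and genuinely relies on the triviality on Khovanov homology of a $2\pi$-rotation about an axis disjoint from the knot, which is where functoriality in $S^3$ is needed. A second, more routine ingredient is the naturality of the algebraic flip identification $\eta$ with respect to movies --- that $\eta$ intertwines the chain maps of flipped Reidemeister and Morse moves --- but this is already part of the construction underlying Theorem~\ref{thm kappa=id}.
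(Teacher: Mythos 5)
Your core computation is sound, and it runs on the same engine as the paper: identify the transvergent algebraic involution with the flip identification $\eta$, and use Theorem \ref{thm kappa=id} to trade algebraic flip identifications for rotation-cobordism maps. The bookkeeping, however, is genuinely different. The paper's proof is shorter: by Theorem \ref{thm kappa=id}, $\tau_t=\eta_{D_t}$ is chain homotopic to the map induced by Reidemeister moves realizing the trace of the $\pi$-rotation about the axis; $\tau_i$ is literally such a map (a planar isotopy); and then the rotation cobordism is commuted past the comparison cobordism $W$ using functoriality, which gives the intertwining for the given $\varphi$ directly. You instead fix the interpolating family $D_\theta$, factor the intravergent involution as $\eta'_{D_{-\pi/2}}\circ\eta_{D_{\pi/2}}$, push $\eta_{D_0}$ through a chosen quarter-rotation movie via the identity $\varphi^*\circ\eta=\eta\circ\varphi$ (the same identity the paper uses in the proof of Theorem \ref{prop:well-definedness of KhI}), and apply Theorem \ref{thm kappa=id} once more at $D_{-\pi/2}$ with respect to the perpendicular in-plane axis. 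Those steps check out; for the direction ambiguity you do not need functoriality in $S^3$ at all, since Lemma \ref{lem: rotating by 2pi} (rotation by $2\pi$ about an arbitrary axis acts trivially) already handles it.

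The genuine gap is one of scope, concentrated in the sentences ``by functoriality $R$ is the canonical identification'' and ``since $R$ is the canonical identification, this is exactly the statement.'' There is no canonical identification of $\Kh$ computed from two diagrams: different sequences of Reidemeister moves between the same diagrams induce maps that need not be chain homotopic, and the paper's precise formulation (Theorem \ref{thm si knots explicit ver}) asserts $\varphi\circ\tau_t\simeq\tau_i\circ\varphi$ for an \emph{arbitrary} fixed Reidemeister-induced $\varphi\colon\CKh(D_t)\to\CKh(D_i)$, for arbitrary transvergent and intravergent diagrams of $(K,\tau)$. You establish the intertwining only for the single map $R$ induced by your chosen quarter-rotation movie, between the particular pair $D_0$, $D_{\pi/2}$; deducing the general statement from this would require that any other movie-induced identification differ from $R$ by a self-equivalence commuting with $\tau_i$ up to homotopy, which is not automatic and is exactly what the paper's cobordism-level argument is designed to supply uniformly in $W$. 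Note also that for given $D_t,D_i$ you may arrange $D_0=D_t$, but $D_{\pi/2}$ is then only \emph{some} intravergent diagram rather than the given $D_i$, so reducing to your model again needs the arbitrary-$\varphi$ version. To close the gap you would either have to run your argument with an arbitrary comparison movie in place of the quarter-rotation one, or, as the paper does, replace both involutions by cobordism maps for the same rotation trace and commute that cobordism past an arbitrary $W$.
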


Theorem \ref{thm strg invertible same} provides further evidence to Witten's proposal of a gauge-theoretic approach to Khovanov homology \cite{witten2011fivebranes,witten2012khovanov}. In the conjectural picture, the involutions defined using two different types of diagrams should come from the same strong inversion $\tau$, so one expects they coincide\footnote{Nevertheless, as pointed out to us by Weifeng Sun, the complex symmetry breaking for the Kapustin--Witten and Haydys--Witten equations, introduced to achieve transversality, may depend on the choice of projection planes, indicating certain analytical subtleties remain in translating this prediction into rigor.}, just as in other Floer-theoretic invariants with external symmetries (see, for example, \cite{alfieri2020connected,dai2022corks,alfieri2023involutions}). However, a Floer-theoretic package from the Kapustin--Witten and Haydys--Witten equations is far from being fully developed.

\subsection{Strategy}

Proofs of Theorems \ref{thm kappa=id}, \ref{thm alg=top 11 tangle}, and \ref{thm alg=top braid} are largely inspired by works on involutive bordered Floer homology \cite{hendricks2019involutive,alishahi2023khovanov}. As an involutive refinement of bordered Floer homology \cite{lipshitz2018bordered}, involutive bordered Floer homology enables computing involutive Heegaard Floer homology by cutting-and-gluing techniques. In particular, it was proved by Alishahi--Truong--Zhang \cite[Theorem 4.1]{alishahi2023khovanov} using bordered Floer homology techniques that the $\iota$ map for the Heegaard Floer chain complex of the branched double cover of a knot $K$ can be perturbed to a filtered map with respect to the cubical grading from the cube of resolutions from a diagram of $K$.

The appropriate analog of bordered Floer homology in Khovanov homology is the tangle invariants developed independently by Khovanov  \cite{khovanov2002functor} and Bar-Natan \cite{bar2005khovanov}. Both of them are robust enough to support the arguments in this paper, and we mostly follow Khovanov's formalism. The main topological ingredient in the proof of Theorem \ref{thm kappa=id} is that the flip map $\kappa$ can be computed by introducing pairs of half twists and their inverses between crossings; see Construction \ref{con: flip map}. Using a rigidity argument, we prove our key technical result, Lemma \ref{lem algebraic flipping of an elementary tangle}, that $\kappa$ is chain homotopic to a filtered map $\kappa'$ with respect to the cubical grading. On the other hand, $\kappa$ has homological degree zero, so $\kappa'$ must \textit{preserve} the cubical grading. By explicitly computing $\kappa$ for unlinks (Example \ref{exmp unlink kappa}), we prove that it is chain homotopic to the identity map, and hence prove Theorem \ref{thm kappa=id}. The proof of Theorem \ref{thm alg=top 11 tangle} follows from a similar argument for $1$-$1$ tangles, and Theorem \ref{thm alg=top braid} is a simple combination of the results in Theorems \ref{thm kappa=id} and \ref{thm alg=top 11 tangle}.

\subsection*{Outline of the paper}

Background on Khovanov homology is provided in Section \ref{sec:bg}. In Section \ref{sec:KhI}, we use the flip map to define involutive Khovanov homology and establish some basic properties. We then prove our main theorems, Theorem \ref{thm kappa=id} in Section \ref{sec:flip map} and Theorems \ref{thm alg=top 11 tangle} and \ref{thm alg=top braid} in Section \ref{sec: half sweep around}. In Section \ref{sec: si knots}, we discuss consequences of our main theorem for strongly invertible knots and prove Theorem \ref{thm strg invertible same}. In Section \ref{sec: discussions}, we compare our invariant to involutive Heegaard Floer homology and discuss some further directions. 

\subsection*{Acknowledgments}
This project was initiated by a conversation with Qiuyu Ren, and we thank him for sharing many enlightening ideas. We have also benefited from discussions with Gary Guth, Robert Lipshitz, Yi Ni, Taketo Sano, and Sucharit Sarkar. We are particularly grateful to Robert Lipshitz for valuable comments on an earlier draft of this paper, and Taketo Sano for pointing out Proposition \ref{prop:BNI trivial} and its proof. We are deeply indebted to our advisor Ciprian Manolescu for his continued guidance and encouragement. This work was partially supported by the Simons Collaboration Grant on New Structures in Low-Dimensional Topology.

\section{Background}\label{sec:bg}

In this section, we briefly review the construction of Khovanov homology for links and tangles to set up notation for the rest of the paper. The main references are \cite{khovanov2000categorification,khovanov2002functor,khovanov2006invariant}; see also \cite{bar2002khovanov,bar2005khovanov}.

\subsection{Conventions for Khovanov homology}
We follow the conventions of Khovanov homology in \cite{bar2002khovanov}, which we briefly summarize here. Throughout the paper, we work over $\mathbb{F} = \mathbb{F}_2$, the field of two elements, unless otherwise specified. 

Let $K$ be a link in $S^3$, and $D$ be a link diagram of $K$ with $n$ crossings, of which $n_+$ are positive crossings, and $n_-$ are negative crossings. Let $V$ be the Frobenius algebra $V = \mathbb{F}[X]/(X^2)$ with multiplication $m$ induced from the polynomial algebra and comultiplication \[\Delta(1) = 1\otimes X + X\otimes 1,\, \Delta(X) = X\otimes X.\]There is a quantum grading $\deg_q$ on $V$ given by
\[\deg_q(1)=1,\, \deg_q(X)=-1.\]Fixing an ordering of the crossings, we form the standard cube \[\underline{1}^n\coloneqq(0\rightarrow1)^n\] of resolutions $D_v$ of $D$ for each vertex $v\in \underline{1}^n,$ with $0$-and $1$-smoothings defined as in Figure \ref{fig:smoothing}.
\begin{figure}[ht]
    	\[
    	{
    		\fontsize{7pt}{9pt}\selectfont
    		\def\svgscale{0.3}
\begingroup%
  \makeatletter%
  \providecommand\color[2][]{%
    \errmessage{(Inkscape) Color is used for the text in Inkscape, but the package 'color.sty' is not loaded}%
    \renewcommand\color[2][]{}%
  }%
  \providecommand\transparent[1]{%
    \errmessage{(Inkscape) Transparency is used (non-zero) for the text in Inkscape, but the package 'transparent.sty' is not loaded}%
    \renewcommand\transparent[1]{}%
  }%
  \providecommand\rotatebox[2]{#2}%
  \newcommand*\fsize{\dimexpr\f@size pt\relax}%
  \newcommand*\lineheight[1]{\fontsize{\fsize}{#1\fsize}\selectfont}%
  \ifx\svgwidth\undefined%
    \setlength{\unitlength}{589.01869859bp}%
    \ifx\svgscale\undefined%
      \relax%
    \else%
      \setlength{\unitlength}{\unitlength * \real{\svgscale}}%
    \fi%
  \else%
    \setlength{\unitlength}{\svgwidth}%
  \fi%
  \global\let\svgwidth\undefined%
  \global\let\svgscale\undefined%
  \makeatother%
  \begin{picture}(1,0.266363)%
    \lineheight{1}%
    \setlength\tabcolsep{0pt}%
    \put(0,0){\includegraphics[width=\unitlength,page=1]{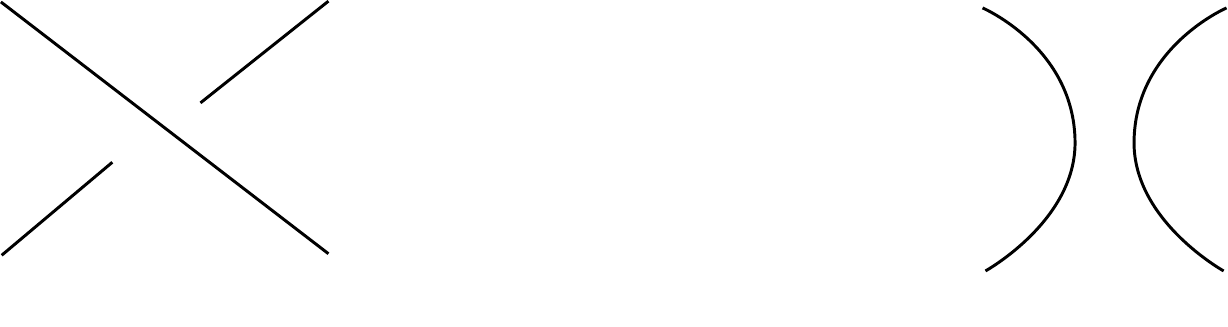}}%
    \put(0.40,-0.01519316){\color[rgb]{0,0,0}\makebox(0,0)[lt]{\lineheight{1.25}\smash{\begin{tabular}[t]{l}$0$-Smoothing\end{tabular}}}}%
    \put(0.8,-0.01526631){\color[rgb]{0,0,0}\makebox(0,0)[lt]{\lineheight{1.25}\smash{\begin{tabular}[t]{l}$1$-Smoothing\end{tabular}}}}%
    \put(0,0){\includegraphics[width=\unitlength,page=2]{smoothing.pdf}}%
  \end{picture}%
\endgroup%

    	}
    	\]
    	\caption{$0$- and $1$- smoothings}
    	\label{fig:smoothing}
    \end{figure}
    
    For each resolution $D_v$, define the graded vector space $\CKh(D_v)$ to be \[\CKh(D_v)\coloneqq V^{\otimes |D_v|}\left\{|v|\right\},\] where $|D_v|$ is the number of components in the smoothing $D_v$, and $\left\{|v|\right\}$ indicates the quantum grading shift, where \[|v| = \sum_{i=1}^nv_i.\]The \textit{Khovanov chain complex} $\CKh(D)$ is then defined to be 
    \[\CKh(D) \coloneqq \bigoplus_{v\in \underline{2}^n} CKh(D_v)[-n_-]\left\{n_+-2n_-\right\}, \]
    where $[-n_-]$ indicates the homological grading shift, and the differential is given as a sum over the edge maps in the cube, which is a tensor product of the identity map with either $m:V\otimes V \to V$ if it merges two circles into one, or $\Delta:V\to V\otimes V$ if it splits one circle into two. We denote the two gradings on $\CKh(D)$ by $(h,q)$, where $h$ is the homological grading and $q$ is the quantum grading. \textit{Khovanov homology} of $K$, denoted by $\Kh(K)$, is then defined as the homology of $\CKh(D)$. It is independent of the choice of link diagrams, and hence an invariant of the link.
   
   A cobordism $\Sigma$ between links $K_0$ and $K_1$ induces a chain map \[\CKh(\Sigma)\colon \CKh(K_0)\to \CKh(K_1)\]of bigrading $(0,\chi(\Sigma))$, defined by presenting the cobordism as a \textit{movie}, i.e., a sequence of planar isotopies, Reidemeister moves and Morse moves, and assigning a map for each elementary move. It is well-defined up to chain homotopy and is an invariant for cobordisms up to isotopy rel boundary. See for example \cite{bar2005khovanov} for the functoriality for cobordisms (up to a global sign) in $\mathbb{R}^3\times I$, and \cite{morrison2022invariants} for the functoriality in $S^3\times I$. 

   Most discussion above also applies to Bar-Natan homology \cite{bar2005khovanov}, where the Frobenius algebra $V$ is replaced by $\FF[H][X]/(X^2=HX)$, where $H$ is a formal variable of quantum degree $-2$. The output Bar-Natan chain complex $\CBN(D)$ is a finitely generated module over $\FF[H]$ whose homology is the \textit{Bar-Natan homology} $\BN(K)$.
    
\subsection{Tangle invariants}
\label{sec:tangle invariants background}The appropriate framework to compute Khovanov homology locally is the theory of tangle invariants developed in \cite{khovanov2002functor,bar2005khovanov}. We mostly use the formalism from \cite{khovanov2002functor}, although most of (if not all) the proofs also work under Bar-Natan's formalism with mild modifications.

An \textit{$(m,n)$-tangle diagram} $T$ is a vertical tangle diagram with $2m$ points on the top and $2n$ points on the bottom. Two tangle diagrams are equivalent if they are isotopic rel boundary. Compositions of tangles are performed vertically. More explicitly, if $a$ is an $(m,n)$-tangle and $b$ is a $(k,m)$-tangle, then $ba$ is the $(k,n)$-tangle obtained by identifying the bottom $2m$ endpoints of $b$ with the top $2m$ endpoints of $a$.

Let $\mathfrak{C}_n$ be the set of crossingless matchings between $2n$ points on a line, which can also be viewed as crossingless $(n,0)$-tangles. For $a\in \mathfrak{C}_n$, denote by $\bar{a}$ the reflection of $a$ about the line, which is a $(0,n)$-tangle. In \cite{khovanov2002functor}, Khovanov constructs a finite-dimensional graded ring 
\[H^n \coloneqq\bigoplus_{a,b\in \mathfrak{C}_n} {}_a (H^n)_b,\] with idempotents labeled by elements in $\mathfrak{C}_n$. Here, 
\[{}_a (H^n)_b \coloneqq V^{\otimes k_{a,b}}\left\{n \right\}\]
if the concatenation $\bar{b}a$ consists of $k_{a,b}$ many components, and $V$ is the same vector space assigned to the unknot in Khovanov homology. The nontrivial multiplication takes the form
\[{}_a (H^n)_b \otimes {}_b (H^n)_c \to {}_a (H^n)_c,\]
which is given by a sequence of saddle maps from $b\bar{b}$ to the identity tangle on $2n$ strands. For $n=0$, set $H^0$ to be the base field $\FF=\FF_2$.

For each $(m,n)$-tangle diagram $T$, Khovanov associates a chain complex of $(H^m,H^n)$-bimodules 
\[\mathcal{F}(T) \coloneqq \bigoplus_{a\in \mathfrak{C}_m} \bigoplus_{b\in \mathfrak{C}_n}{}_a (\mathcal{F}(T))_{b},\]
where ${}_a (\mathcal{F}(T))_{b} $ is the usual Khovanov chain complex associated to the concatenation $\bar{a}Tb$. In particular, $\mathcal{F}(K)$ coincides with the usual Khovanov chain complex for a knot $K$, viewed as a $(0,0)$-tangle. The assignment is independent of the choice of tangle diagrams up to chain homotopy equivalence.

We review some useful properties of the tangle invariants $\mathcal{F}(T)$ in the following lemmas. The first is a gluing result which states that the vertical concatenation of an $(m,n)$-tangle and a $(k,m)$-tangle corresponds to the tensor product of chain complexes of bimodules over the ring $H^m$. In particular,  this means that we can compute the Khovanov homology of links by cutting them into tangles, performing the local computation for each tangle, and then gluing the tangles back. 

\begin{lem} [\cite{khovanov2002functor}*{Theorem~1}]\label{lem:concatenation of tangles}
    For an $(m,n)$-tangle $a$ and a $(k,m)$-tangle $b$, there is a canonical isomorphism of $(H^k,H^n)$-bimodules
    \[\mathcal{F}(ba) \cong \mathcal{F}(b)\otimes_{H^m}\mathcal{F}(a).\]
\end{lem}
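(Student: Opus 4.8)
The plan is to follow the standard route for gluing theorems of this kind: construct an explicit comparison map out of the topological quantum field theory that underlies both $\mathcal{F}$ and the arc rings $H^m$, reduce to crossingless tangles, and there verify that the map is an isomorphism. To build the map, decompose over idempotents: for $c\in\mathfrak{C}_k$ and $e\in\mathfrak{C}_n$ one has ${}_c\mathcal{F}(ba)_e=\mathcal{F}(\bar c\,b\,a\,e)$, while ${}_c\big(\mathcal{F}(b)\otimes_{H^m}\mathcal{F}(a)\big)_e$ is a quotient of $\bigoplus_{d\in\mathfrak{C}_m}\mathcal{F}(\bar c\,b\,d)\otimes_{\FF}\mathcal{F}(\bar d\,a\,e)$ by the balancing relations for the right $H^m$-action on $\mathcal{F}(b)$ and the left $H^m$-action on $\mathcal{F}(a)$. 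For fixed $d$, placing $\bar c\,b\,d$ above $\bar d\,a\,e$ gives the closed diagram $\bar c\,b\,(d\bar d)\,a\,e$, which is the disjoint union $(\bar c\,b\,d)\sqcup(\bar d\,a\,e)$; resolving the middle cap--cup pattern $d\bar d$ to the identity $m$-strand tangle by a sequence of saddles induces a map to $\mathcal{F}(\bar c\,b\,a\,e)$, and these assemble into $\Psi\colon\mathcal{F}(b)\otimes_{H^m}\mathcal{F}(a)\to\mathcal{F}(ba)$. That $\Psi$ factors through $\otimes_{H^m}$ is precisely the associativity of these saddle maps --- the same fact that makes the multiplication on $H^m$ associative --- and $\Psi$ intertwines the outer $(H^k,H^n)$-actions, which touch only the boundary pieces $\bar c$ and $e$ that $\Psi$ leaves alone; so $\Psi$ is a morphism of complexes of bimodules.

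Next I would reduce the claim that $\Psi$ is an isomorphism to the case of crossingless $a$ and $b$. Fixing diagrams and ordering the crossings of $ba$ as those of $b$ followed by those of $a$, the cube of resolutions of $ba$ is the product of the cubes of $b$ and $a$: the $(v,w)$-vertex is the crossingless tangle $b_v a_w$, and the differential is a sum of edge maps acting on the $b$-resolution and edge maps acting on the $a$-resolution. Since $-\otimes_{H^m}-$ is additive, $\mathcal{F}(b)\otimes_{H^m}\mathcal{F}(a)$ is, as a bigraded bimodule, $\bigoplus_{v,w}\mathcal{F}(b_v)\otimes_{H^m}\mathcal{F}(a_w)$ with the total differential, the grading shifts match because $n_\pm(ba)=n_\pm(b)+n_\pm(a)$ and $|(v,w)|=|v|+|w|$, and $\Psi$ is the assembly of the comparison maps for the crossingless vertices (compatibility with the edge maps being routine). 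The unit case $\mathcal{F}(\mathbf{1}_m)\cong H^m$ is immediate, so it remains to treat crossingless $a$ and $b$.

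The crossingless case is where I expect the real difficulty, since it requires a precise understanding of the bimodule structures over the arc rings. I would reduce it once more: any crossingless $(m,n)$-tangle is a vertical composite of elementary pieces --- the identity tangle, the ``turnback'' tangles that cap off or create an adjacent pair of strands, and disjoint circles --- so by associativity of $\otimes$ it suffices to verify $\mathcal{F}(cc')\cong\mathcal{F}(c)\otimes_{H^m}\mathcal{F}(c')$ when $c$ and $c'$ are such elementary tangles. For these, $\mathcal{F}(c)$ and $\mathcal{F}(c')$ are the standard ``turnback'' bimodules over the arc rings, and the verification is a finite computation governed by the Frobenius structure of $V$ and the combinatorics of crossingless matchings: when the matchings at the interface are compatible, the relative tensor product over $H^m$ merges the interface circles, each contributing exactly one copy of $V$ because $V$ is a unital commutative Frobenius algebra; when they are not, the off-diagonal idempotent components of $H^m$ reduce the contribution so that the two sides have the same dimension in each bidegree. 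Tracking the shifts $\{|v|\}$, $\{n\}$, and $[-n_-]$ through these identifications is then bookkeeping. Once every elementary case is established, reassembling over the composite and then over the product cube shows that $\Psi$ is an isomorphism; its canonicity --- independence of the chosen crossing orderings, and, up to the usual homotopy equivalences, of the tangle diagrams --- follows from the naturality of all maps used, with projectivity of $\mathcal{F}$ of a crossingless tangle as an $(H^k,H^m)$-bimodule (also due to Khovanov) ensuring that $\otimes_{H^m}$ respects those homotopy equivalences.
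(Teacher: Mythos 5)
First, note that the paper does not prove this lemma at all: it is quoted verbatim as Theorem~1 of Khovanov's \emph{A functor-valued invariant of tangles} (the reference \cite{khovanov2002functor}), so there is no in-paper argument to compare against; your proposal has to be judged against the original source's proof. Your overall scaffolding is the right one and matches the standard route: define $\Psi$ on each idempotent block by inserting $d\bar d$ and applying the saddle cobordisms that also define the multiplication on $H^m$, check it is balanced and bimodule-linear, observe that the cube of resolutions of $ba$ is the product of the two cubes with additive shifts, and thereby reduce the isomorphism statement to flat (crossingless) tangles.

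The genuine gap is at that flat-tangle crux, in two ways. First, the reduction ``it suffices to verify $\mathcal{F}(cc')\cong\mathcal{F}(c)\otimes_{H^m}\mathcal{F}(c')$ for $c,c'$ elementary'' is circular as stated: to pass from a long vertical composite $b=c_1\cdots c_r$ to an iterated tensor product $\mathcal{F}(c_1)\otimes\cdots\otimes\mathcal{F}(c_r)$ you are already invoking the gluing theorem for flat tangles. A correct induction on the number of elementary pieces needs as its base case the statement for a single cap/cup against an \emph{arbitrary} flat tangle (or some equivalent device), which is a different and strictly harder verification than elementary-versus-elementary, and your outline never addresses it. Second, even in the cases you do discuss, the actual content --- that the balanced tensor product over the non-semisimple ring $H^m$, i.e.\ the quotient of $\bigoplus_d\mathcal{F}(\bar c b d)\otimes_{\FF}\mathcal{F}(\bar d a e)$ by the relations through ${}_d(H^m)_{d'}$, has exactly the size of $\mathcal{F}(\bar c b a e)$ and that $\Psi$ identifies them --- is asserted via ``merging circles contributes one copy of $V$'' and ``the off-diagonal idempotents reduce the contribution so the dimensions match.'' That is precisely what must be proved, not a bookkeeping step: one has to show $\Psi$ is surjective and that its kernel is generated by the balancing relations (Khovanov does this by a direct analysis of the flat case using the structure of the rings $H^m$ and the Frobenius algebra $V$). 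So while your construction of $\Psi$ and the reduction to flat tangles are sound, the heart of the theorem is left unestablished in your proposal.
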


The next one concerns the exactness of the tensor product, allowing us to replace the tangle invariants and maps between them with homotopic ones in the computation. 
\begin{lem} [\cite{khovanov2002functor}*{Proposition~3}]\label{lem:sweet bimodules}
    For an $(m,n)$-tangle $a$, the tensor product functor:
    \[-\otimes_{H^m} \mathcal{F}(a) \colon M \to M\otimes_{H^m} \mathcal{F}(a)  \] is an exact functor from the category of right $H^m$-modules to the category of right $H^n$-modules, and a similar statement holds for $\mathcal{F}(a) \otimes_{H^n}-$.
\end{lem}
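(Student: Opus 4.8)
The plan is to prove the stronger statement that $\mathcal{F}(a)$ is a bounded complex of $(H^m,H^n)$-bimodules each of which is projective as a left $H^m$-module and projective as a right $H^n$-module (Khovanov's \emph{sweet bimodules}), from which both exactness claims follow formally: tensoring a short exact sequence of right $H^m$-modules degreewise with projective (hence flat) bimodules preserves exactness, and forming the total complex commutes with this.

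First I would reduce to flat tangles. Resolving the crossings of $a$ in both ways presents $\mathcal{F}(a)$, as a complex of bimodules, with each chain group a finite direct sum of grading-shifted copies of $\mathcal{F}(b)$, where $b$ ranges over the crossingless (``flat'') $(m,n)$-tangle diagrams obtained from $a$. One-sided projectivity is preserved by finite direct sums and grading shifts, so it suffices to show that $\mathcal{F}(b)$ is sweet for each flat $b$.

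Second I would analyze a flat tangle bimodule directly. By definition ${}_c\mathcal{F}(b)_d$ is the (one-term) Khovanov complex of the closed crossingless diagram $\bar c\, b\, d$, namely $V^{\otimes k}$ up to a quantum shift, where $k$ is its number of circles. The key geometric input is that for each fixed $d\in\mathfrak{C}_n$ the composite $bd$ is a crossingless $(m,0)$-tangle, hence planar-isotopic to $\overline{c(d)}$ for a unique $c(d)\in\mathfrak{C}_m$ disjoint from some number $\ell_d\ge 0$ of free circles. This is exactly the combinatorial fact underlying the definition of the multiplication on $H^m$ in \cite{khovanov2002functor}, and it identifies the left $H^m$-module $\bigoplus_{c\in\mathfrak{C}_m}{}_c\mathcal{F}(b)_d$, up to grading shift, with $(H^m e_{c(d)})\otimes_{\FF}V^{\otimes\ell_d}$ — a direct sum of shifted copies of the standard projective $H^m e_{c(d)}$. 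Taking the direct sum over $d\in\mathfrak{C}_n$ shows $\mathcal{F}(b)$ is left-projective, and the mirror-image argument (composing $\bar c$ on the top of $b$ and running the analogous identification over $\mathfrak{C}_m$) shows it is right-projective. Combined with the first step this yields the lemma, and the statement for $\mathcal{F}(a)\otimes_{H^n}-$ is symmetric.

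The step I expect to be the main obstacle is making the identification $bd\simeq\overline{c(d)}\sqcup(\ell_d\text{ circles})$ compatible with the ring structure: one must check that the saddle maps defining multiplication in $H^m$ go over to the evident module action on $(H^m e_{c(d)})\otimes V^{\otimes\ell_d}$, and keep careful track of the quantum-grading shifts $\{n\}$ built into ${}_a(H^n)_b$ and into $\mathcal{F}$. By contrast, the reduction to flat tangles and the passage from degreewise exactness to exactness of the total-complex functor are routine homological bookkeeping.
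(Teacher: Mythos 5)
Your argument is correct and reconstructs essentially the proof of the result the paper simply cites: Lemma \ref{lem:sweet bimodules} is taken from Khovanov's Proposition~3, whose proof is exactly the sweet-bimodule argument you outline (reduce to the flat resolutions of $a$, identify the left module $\bigoplus_{c}{}_c\mathcal{F}(b)_d$ with a grading-shifted $H^m e_{c(d)}\otimes_{\FF} V^{\otimes \ell_d}$ via the isotopy removing closed circles, conclude one-sided projectivity hence flatness, and deduce exactness degreewise), and the compatibility with the saddle multiplication that you flag is indeed the point Khovanov checks. The only cosmetic slip is that the crossingless $(m,0)$-tangle $bd$ is planar-isotopic to $c(d)\in\mathfrak{C}_m$ itself together with $\ell_d$ closed circles, not to the reflection $\overline{c(d)}$ (which has the wrong boundary type); this does not affect the argument.
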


Khovanov uses the terminology \textit{sweet bimodule} \cite{khovanov2002functor}*{Definition~1} for $(H^m,H^n)$-bimodules that are finitely-generated and projective as both left $H^m$-modules and right $H^n$-modules and proves that $\mathcal{F}(a)$ is a sweet bimodule for any $(m,n)$-tangle $a$.

The next lemma states the functoriality of the tangle invariants under tangle cobordisms.

\begin{lem} [\cite{khovanov2006invariant}*{Theorem~1}] \label{lem:functoriality of tangle invariants}
    For each relative isotopy class of tangle cobordism $\Sigma:T_1 \to T_2$ (in $\mathbb{R}^2 \times [0,1]\times [0,1]$), there is a cobordism map
    \[\mathcal{F}_{\Sigma} \colon \mathcal{F}(T_1) \to \mathcal{F}(T_2),\]well-defined up to chain homotopy. 
\end{lem}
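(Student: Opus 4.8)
The plan is to follow the standard movie-presentation approach to functoriality, now adapted to Khovanov's bimodule-valued invariant $\mathcal{F}$, and using crucially that we work over $\mathbb{F}_2$ so that no sign ambiguities arise. First I would fix a generic movie presentation of $\Sigma$: a finite sequence of tangle diagrams $T_1 = D_0, D_1, \ldots, D_N = T_2$ in which each consecutive pair differs by either a Reidemeister move or one of the three elementary Morse moves (birth of a small circle, death of a small circle, or a saddle merging or splitting two arcs), each taking place in a small disk in the interior of the strip, disjoint from the $2m+2n$ boundary points. Such a presentation exists after putting $\Sigma$ in general position with respect to the height function.

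Next, to each elementary step I would assign a chain map of complexes of $(H^m,H^n)$-bimodules. For a Reidemeister step, this is the homotopy equivalence $\rho_i\colon\mathcal{F}(D_i)\to\mathcal{F}(D_{i+1})$ already produced in the proof that $\mathcal{F}$ is a tangle invariant (it is an isomorphism on homology). For a Morse step, the modification is supported in a disk avoiding the boundary, so on each summand ${}_a(\mathcal{F}(D_i))_b$ it is literally the Khovanov cobordism map for the closed diagram $\overline{a}D_ib$: a birth tensors with the unit $1\in V$, a death applies the counit $V\to\mathbb{F}_2$, and a saddle applies $m$ or $\Delta$ to the relevant tensor factors; because the disk is disjoint from the boundary endpoints, these are maps of $(H^m,H^n)$-bimodules, not merely of graded vector spaces. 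Composing all the $\rho_i$ with all the Morse maps gives a candidate chain map $\mathcal{F}_\Sigma = \rho_{N-1}\circ\cdots\circ(\text{first map})$ of bimodule complexes, of the expected bidegree.

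The main content is then the independence of $\mathcal{F}_\Sigma$ from the choice of movie, up to chain homotopy. By the Carter--Saito movie-move theorem, any two movie presentations of relatively isotopic tangle cobordisms are related by a finite sequence of the elementary movie moves (and insertion/deletion of trivial segments), each of which is again local. So it suffices to check that the two composite maps attached to the two sides of each movie move are chain homotopic as maps of bimodule complexes. I would organize this into three groups: (i) moves involving only Reidemeister steps, which follow from the coherence already built into the invariance proof of $\mathcal{F}$; (ii) moves that slide a birth, death, or saddle past a Reidemeister move, handled by the naturality of the Reidemeister homotopy equivalences with respect to the local TQFT maps; and (iii) moves involving only Morse steps and the various ``film-loop'' relations (sphere, tube, saddle reordering), which reduce to Frobenius-algebra identities in $V=\mathbb{F}_2[X]/(X^2)$: the counit kills $1$, $m\circ\Delta$ is multiplication by $X$, coassociativity, and the Frobenius relation $(m\otimes\mathrm{id})(\mathrm{id}\otimes\Delta)=\Delta\circ m=(\mathrm{id}\otimes m)(\Delta\otimes\mathrm{id})$. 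Since each move is supported away from the tangle boundary, these are exactly the computations carried out in the closed (link) case by Jacobsson, Bar-Natan, and Khovanov, together with the observation that the homotopies produced are bimodule homotopies; working mod $2$ removes even the sign bookkeeping that leaves the link invariant defined only up to $\pm1$.

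The hard part will be group (iii): the number of movie moves and the need to exhibit honest chain homotopies (not just homology-level equalities) that are moreover compatible with the $(H^m,H^n)$-bimodule structure. This is substantially eased by (a) reducing most moves to the known link case, (b) Bar-Natan's local cobordism-category reformulation, in which many movie moves become direct consequences of the Frobenius relation or of delooping, and (c) the $\mathbb{F}_2$ hypothesis. A clean alternative I would also pursue is a partial-closure reduction: reconstruct $\mathcal{F}(T)$ and its cobordism maps from the ordinary Khovanov complexes of the closed diagrams $\overline{a}Tb$ over all crossingless matchings $a,b$, together with the arc-ring actions, and thereby deduce the movie-move coherence for $\Sigma$ from that for the closed cobordisms $\overline{a}\,\Sigma\, b$, which is covered by the established functoriality for links.
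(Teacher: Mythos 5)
This lemma is not proved in the paper at all: it is quoted verbatim from Khovanov's tangle-cobordism paper (\cite{khovanov2006invariant}, Theorem 1), so the only comparison available is with that cited proof. Your outline is essentially the same strategy as the established one: present $\Sigma$ by a movie of Reidemeister and Morse moves, observe that each elementary map is a map of $(H^m,H^n)$-bimodules because the modification is supported away from the tangle endpoints, and verify invariance under the Carter--Saito movie moves; your remark that $\mathbb{F}_2$ coefficients kill the residual sign ambiguity matches the paper's own comment that Khovanov's $2$-functor is only defined up to an overall sign over $\mathbb{Z}$. One difference worth noting: Khovanov does not carry out your group (iii) as a long list of Frobenius-algebra verifications in the style of Jacobsson/Bar-Natan; instead he exploits the tangle formalism itself, using duality (cup/cap adjunctions) and the rigidity/invertibility of braid-like tangle invariants (the statement quoted as Lemma 2.4 in this paper) to reduce most movie moves to a small number of checks, which is considerably more economical than the case-by-case route you sketch. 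Finally, be cautious with your proposed ``partial-closure'' alternative: functoriality for the closed diagrams $\overline{a}\,\Sigma\,b$ only produces maps on the summands ${}_a(\mathcal{F}(T_1))_b$ well-defined up to homotopies chosen independently for each pair $(a,b)$, and there is no a priori reason these choices assemble into a single chain map, let alone one compatible with the $H^m$- and $H^n$-actions up to bimodule homotopy; that compatibility is precisely the content one must prove, so this reduction does not come for free.
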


In fact, we get a $2$-functor $\mathcal{F}: \mathcal{T} \to \hat{\mathbb{K}}.$  Here, $\mathcal{T}$ is the $2$-category with objects consisting of even-length strings of signs, $1$-morphisms of (oriented) tangles between endpoints with compatible orientations, and $2$-morphisms of (oriented) tangle cobordisms. $\hat{\mathbb{K}}$ is the $2$-category with objects of non-negative integers, $1$-morphisms of bounded chain complexes of $(H^m,H^n)$-bimodules, and $2$-morphisms of chain maps up to homotopy. This is an overall sign ambiguity in Khovanov's formalism of the $2$-functor, but it will not be an issue when working over $\FF_2$. 

One of the key steps in establishing functoriality, and of independent importance to us, is a rigidity property for a certain class of tangles, which we now recall. See also  \cite[Definition 8.5]{bar2005khovanov} and the discussion around it.

\begin{lem}[\cite{khovanov2006invariant}*{Corollary~1,2}] \label{lem:rigidity of tangle invariants}
    Let $a$ be an $(n,n)$-tangle such that there exist $(n,n)$-tangles $b,c$ that the concatenations $ba$ and $ac$ are both the identity $(n,n)$-tangle. Then, every degree $0$ homotopy equivalence $f\colon \mathcal{F}(a) \to \mathcal{F}(a)$ is chain homotopic to the identity map $id_{\mathcal{F}(a)}$. Similarly, if $a$ and $a'$ are two tangles satisfying the above conditions, then all homotopy equivalences between $\mathcal{F}(a)$ and $\mathcal{F}(a')$ are chain homotopic. 
\end{lem}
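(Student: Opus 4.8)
The plan is to recognize $\mathcal{F}(a)$ as an invertible object in the monoidal homotopy category of bounded complexes of $(H^n,H^n)$-bimodules, and then to reduce the statement to the fact that the degree-zero part of the endomorphism ring of the monoidal unit $H^n$ is trivial. (Here ``degree zero'' always means preserving both the homological and the quantum grading.) First I would record that $\mathcal{F}$ sends the identity $(n,n)$-tangle to the bimodule $H^n$ itself, viewed as a complex concentrated in homological degree zero, which is the unit for $\otimes_{H^n}$. Feeding the hypotheses $ba=\mathrm{id}_n$ and $ac=\mathrm{id}_n$ into the gluing theorem (Lemma~\ref{lem:concatenation of tangles}) then gives chain homotopy equivalences
\[\mathcal{F}(b)\otimes_{H^n}\mathcal{F}(a)\simeq H^n\simeq \mathcal{F}(a)\otimes_{H^n}\mathcal{F}(c).\]
Since each $\mathcal{F}(\cdot)$ is a complex of sweet bimodules, $\otimes_{H^n}$ is exact and preserves homotopy equivalences (Lemma~\ref{lem:sweet bimodules}) and so descends to a monoidal product on the homotopy category; the standard manipulation $\mathcal{F}(b)\simeq \mathcal{F}(b)\otimes_{H^n}\bigl(\mathcal{F}(a)\otimes_{H^n}\mathcal{F}(c)\bigr)\simeq\mathcal{F}(c)$ then exhibits $\mathcal{F}(a)$ as invertible, with two-sided inverse $P:=\mathcal{F}(b)\simeq\mathcal{F}(c)$.

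Next I would use invertibility to transport endomorphisms. Conjugation by $P$ — sending a self-map $f$ of $\mathcal{F}(a)$ to $f\otimes_{H^n}\mathrm{id}_P$ and then using the identification $\mathcal{F}(a)\otimes_{H^n}P\simeq H^n$ — is a bigrading-preserving ring isomorphism from the endomorphism ring of $\mathcal{F}(a)$ in the homotopy category onto that of $H^n$. As $H^n$ is concentrated in homological degree zero, there is no room for nonzero null-homotopies, so the degree-zero part of its homotopy-category endomorphism ring is just the ring of grading-preserving $(H^n,H^n)$-bimodule endomorphisms of $H^n$, that is, multiplication by a central element of $H^n$ sitting in the same graded summand as the unit. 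This is where the structural input on $H^n$ enters: that summand is exactly $\bigoplus_{a\in\mathfrak{C}_n}\FF\,e_a$ for the mutually orthogonal idempotents $e_a$, and since ${}_a(H^n)_b\neq 0$ for every pair $a,b$, centrality of $\sum_a\lambda_a e_a$ forces all the $\lambda_a$ to coincide; hence the ring is $\FF\cdot 1$. Over $\FF=\FF_2$ it has exactly two elements, namely $0$ and $1$.

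Consequently the degree-zero part of the endomorphism ring of $\mathcal{F}(a)$ in the homotopy category consists of precisely the two classes $[0]$ and $[\mathrm{id}_{\mathcal{F}(a)}]$. The complex $\mathcal{F}(a)$ is not contractible — otherwise $H^n\simeq\mathcal{F}(a)\otimes_{H^n}P$ would be contractible — so these two classes are distinct and $[0]$ is not a unit. A degree-zero homotopy equivalence $f$ is a unit in this ring, hence $[f]=[\mathrm{id}_{\mathcal{F}(a)}]$, i.e.\ $f\simeq\mathrm{id}_{\mathcal{F}(a)}$. For the final assertion, given homotopy equivalences $f,g\colon\mathcal{F}(a)\to\mathcal{F}(a')$, the composite $g^{-1}f$ (with $g^{-1}$ a homotopy inverse of $g$) is a degree-zero self-homotopy-equivalence of $\mathcal{F}(a)$, hence homotopic to $\mathrm{id}_{\mathcal{F}(a)}$ by the previous sentence, and therefore $f\simeq g$.

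The step I expect to be the main obstacle is not the monoidal-categorical bookkeeping but making the structural ingredients precise: that $\mathcal{F}$ of the identity tangle is genuinely the unit bimodule, that the gluing isomorphisms are coherent enough (associative and unital up to coherent homotopy) to license both the invertibility argument and the conjugation ring isomorphism, and, most importantly, Khovanov's computation identifying the graded summand of $H^n$ containing the unit with the span of the idempotents. With those in place the rest is essentially formal.
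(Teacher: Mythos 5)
Your proposal is correct, and since the paper does not prove this lemma itself but cites Khovanov's \emph{An invariant of tangle cobordisms} (Corollaries 1--2), the relevant comparison is with that source, whose argument is essentially yours: invertibility of $\mathcal{F}(a)$ (via the gluing theorem and exactness for sweet bimodules) makes tensoring with the inverse an equivalence on the homotopy category, identifying degree-zero endomorphisms of $\mathcal{F}(a)$ with grading-preserving central elements of $H^n$, which reduce to scalar multiples of the unit because the graded piece containing $1=\sum_a e_a$ is $\bigoplus_{a\in\mathfrak{C}_n}\FF\,e_a$ and ${}_a(H^n)_b\neq 0$ for all $a,b$. So this is essentially the same approach as the cited proof, carried out correctly.
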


The conditions on the concatenations of the tangle $a$ imply that the tangle invariant $\mathcal{F}(a)$ is \textit{invertible} in Khovanov's language in \cite{khovanov2006invariant}, and \textit{Kh-simple} in Bar-Natan's language in \cite{bar2005khovanov}. In particular, this holds if $a$ is a $2n$-strand braid. Again, there is no sign ambiguity when working over $\FF_2$. See also \cite{lipshitz2018bordered}*{Section~4} for the corresponding statements in bordered Floer homology.

\section{Involutive Khovanov homology}\label{sec:KhI}

In this section, we use the flip map to define the involutive Khovanov homology for links in $S^3$, prove that it is a link invariant, and compute it for unlinks. 

We first recall some definitions from the introduction.

\begin{defn}
	Let $K\subset S^3$ be a link and $D$ be a diagram for $K$.  The \textit{flip diagram} for $D$ is the diagram $D^*$ obtained by taking the mirror diagram of $D$ across a line and then reversing all crossings (i.e., switching the undercrossings with overcrossings). Equivalently, it is obtained by rotating $K$ about an axis in the plane and then projecting it onto the same plane.
\end{defn}

It is clear from the definition that $D^*$ represents the same link $K$, and it is independent of the choice of the axis up to planar isotopy.

There is a canonical identification between resolutions of $D$ and $D^*$ by  flipping all circles in the resolutions, which gives a canonical identification between the Khovanov chain complexes \[\eta_D\colon \CKh(D)\to\CKh(D^*).\]The diagrams $D$ and $D^*$ are related by a preferred isotopy, namely rotating $D$ about the axis in $S^3$ by $\pi$. We call the trace cobordism of this isotopy the \textit{flip cobordism}. This gives another chain homotopy equivalence \[R_{D^*}\colon \CKh(D^*)\to\CKh(D),\]induced by a sequence of Reidemeister moves that realizes the flip cobordism, which is well-defined up to chain homotopy by functoriality.

\begin{rem}
    There are \textit{a priori} two choices of the flip cobordism: the trace of rotating $D$ by $\pi t\,(t\in[0,1])$, or the trace of rotating $D$ by $-\pi t \,(t\in[0,1])$. The difference between the two choices is the cobordism map of the trace of a $2\pi$ rotation of $K$, which is proved in Lemma \ref{lem: rotating by 2pi} to be the identity map. Therefore, two choices yield the same chain map up to homotopy.
\end{rem}

\begin{defn}
	Let $D$ be a diagram of $K$. The \textit{flip map} $\kappa_D$ is defined as the composition $R_{D^*}\circ\eta_D$: \[\kappa_D\colon\CKh(D)\xrightarrow{\eta_D}\CKh(D^*)\xrightarrow{R_{D^*}}\CKh(D).\]
\end{defn}

We often suppress the subscript in $\kappa_D$ and simply denote the flip map by $\kappa$ when the diagram $D$ we consider is clear from the context. To prove $\kappa$ is a homotopy involution, we need the following lemma, which seems to be well-known to experts.

\begin{lem}\label{lem: rotating by 2pi}
    Let $D$ be a diagram for a knot $K\subset S^3$. Let $\Sigma$ be the trace of the isotopy that rotates $K$ about an arbitrary axis by $2\pi$. Then $\CKh(\Sigma)\colon \CKh(D)\to\CKh(D)$ is chain homotopic to the identity map.
\end{lem}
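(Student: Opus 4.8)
The plan is to reduce the claim to the rigidity property of tangle invariants (Lemma \ref{lem:rigidity of tangle invariants}) by realizing the $2\pi$-rotation cobordism as a cobordism supported away from a small disk, and then observing that the relevant portion is a braid-like tangle on which every degree-zero homotopy equivalence is the identity. Concretely, I would first note that it suffices to prove the statement for an axis that we may place conveniently: any two axes are related by an ambient isotopy of $S^3$ carrying $D$ to another diagram $D'$, and the naturality of cobordism maps under such isotopies (together with functoriality, Lemma \ref{lem:functoriality of tangle invariants} in the tangle setting, or the link version stated in Section \ref{sec:bg}) shows that the answer is independent of the axis up to the identifications involved. So fix the axis to be a vertical line in the plane that misses $D$ entirely except that it passes ``around'' the diagram in $S^3$; equivalently, think of $D$ as living in a ball and the $2\pi$ rotation as dragging the whole diagram once around the axis and back.

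Next I would cut out a small trivial $(1,1)$-tangle (a single unknotted, uncrossed strand) $e$ from $D$ along an arc that the rotation sweeps through, writing $D = N(e)$ for a ``numerator-type'' closure $N(-)$, so that $\CKh(D) \cong \mathcal{F}(N) \otimes_{H^m} \mathcal{F}(e)$ by the gluing theorem, Lemma \ref{lem:concatenation of tangles}. The point is that the $2\pi$-rotation cobordism $\Sigma$ can be arranged to be the identity cobordism on the $N$-part and to be the trace of a full $2\pi$ rotation on the trivial strand $e$; thus $\CKh(\Sigma) = \idd_{\mathcal{F}(N)} \otimes_{H^m} \mathcal{F}_{\Sigma_e}$, where $\Sigma_e\colon e \to e$ is the $2\pi$-rotation cobordism of the trivial $(1,1)$-tangle. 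Since $e$ is (after a planar isotopy) a $2n$-strand braid — in particular an invertible/$Kh$-simple tangle — Lemma \ref{lem:rigidity of tangle invariants} applies: $\mathcal{F}_{\Sigma_e}$ is a degree-zero homotopy equivalence of $\mathcal{F}(e)$, hence chain homotopic to $\idd_{\mathcal{F}(e)}$. Tensoring with the identity on $\mathcal{F}(N)$ and using exactness of the tensor product (Lemma \ref{lem:sweet bimodules}) to see that chain homotopies are preserved, we conclude $\CKh(\Sigma) \simeq \idd_{\CKh(D)}$.

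The main obstacle I anticipate is the geometric bookkeeping in the previous paragraph: making precise the assertion that the $2\pi$-rotation cobordism \emph{decomposes} as the identity on the complement of a small ball and the $2\pi$-rotation of a trivial strand inside it. One has to choose the ball so that the rotation isotopy, at every time $t\in[0,1]$, moves only the part of the link lying inside (a slightly fattened copy of) that ball — which is possible because the rotation by angle $\pi t$ is ambient-isotopic, rel the far part of the diagram, to an isotopy supported in a tubular neighborhood of the axis, and we can isotope $D$ so that only a trivial strand lies near the axis. Once this localization is set up carefully, the algebra is immediate from the gluing and rigidity lemmas. An alternative, purely diagrammatic route that avoids some of this: realize the $2\pi$ rotation explicitly as a movie — pulling a strand around through a sequence of Reidemeister II and III moves that returns to the starting diagram — and show directly that the composite chain map is a degree-zero self-homotopy-equivalence of $\CKh(D)$; but this still needs an input (rigidity, or the known triviality of the ``loop'' in the mapping class group of a pointed disk acting on the tangle invariant) to conclude it is homotopic to the identity rather than merely invertible.
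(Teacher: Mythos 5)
The heart of your argument is the claim that the trace cobordism $\Sigma$ of the global $2\pi$-rotation can be isotoped rel boundary to a cobordism that is the product outside a small ball meeting $K$ in a trivial tangle, with all of the motion concentrated inside that ball. That claim is exactly the nontrivial content, and the justification you sketch --- that ``the rotation by angle $\pi t$ is ambient-isotopic, rel the far part of the diagram, to an isotopy supported in a tubular neighborhood of the axis'' --- is not correct as stated: a rotation about an axis moves every point off the axis, so it is not supported near the axis, and it cannot be made relative to the rest of the diagram without already invoking an isotopy rel boundary of the trace surface that is precisely what needs to be proved. Making the localization rigorous amounts to showing that the loop $t\mapsto R_{2\pi t}(K)$ in the space of embeddings is homotopic rel basepoint to a loop supported in a ball meeting $K$ in a trivial tangle (or at least that the trace annulus is isotopic rel boundary to such a localized cobordism); for a nontrivial knot this rotation loop can represent a nontrivial element of $\pi_1$ of the space of knots (for instance the circle-action loops for torus knots), so no soft general-position argument is available, and you would be proving something at least as delicate as the lemma itself. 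Note also that, once ``localized,'' the trace of a $2\pi$ rotation of a trivial strand is not a tangle cobordism rel endpoints unless the axis is the strand itself, in which case the local piece is the product and you are back to asserting that $\Sigma$ is isotopic rel boundary to the identity cobordism --- again the unproven point. The algebraic half of your plan (gluing, rigidity for braid-like tangles over $\FF_2$, and the fact that tensoring preserves chain homotopies) is fine and involves no circularity, but it only engages after the missing geometric step; a minor further point is that in the paper's conventions the local piece should be a trivial $(k,k)$-tangle (an identity braid, with an even number of endpoints on top and bottom) rather than a single-strand $1$-$1$ tangle, so that the $H^n$ gluing and rigidity lemmas literally apply.

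For comparison, the paper's proof avoids localization entirely: it first observes that if the projection plane is orthogonal to the rotation axis, the whole $2\pi$-rotation is a movie consisting only of planar isotopies (the diagram spins in the plane and returns to itself), so the induced map is the identity on the nose; for a general diagram $D$ one fixes a sequence of Reidemeister moves inducing $\varphi\colon\CKh(D)\to\CKh(D')$ with $D'$ drawn in a plane orthogonal to the axis, notes that the $2\pi$-rotation cobordism commutes with Reidemeister cobordisms up to isotopy rel boundary, and concludes $\CKh(\Sigma)\simeq\varphi^{-1}\circ\CKh(\Sigma')\circ\varphi\simeq\idd$ by naturality. If you want to keep the spirit of your outline, the fix is to replace the ball-localization step by this choice of projection plane, after which no gluing or rigidity input is needed at all.
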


\begin{proof}
   If the projection plane for $D$ is orthogonal to the rotational axis, then $\Sigma$ can be realized as a planar isotopy, and hence the assertion trivially holds. In general, let $D'$ be a diagram for a knot isotopic to $K$ such that the corresponding projection plane is orthogonal to the rotational axis, and $\Sigma'$ be the corresponding trace of rotation for $D'$. Let $\varphi\colon\CKh(D)\to\CKh(D')$ be a chain homotopy equivalence induced by any fixed sequence of Reidemeister moves from $D$ to $D'$. The ``rotation by $2\pi$'' cobordism commutes with any cobordism represented by Reidemeister moves up to isotopy relative to the boundary, and hence, the following diagram commutes up to chain homotopy, by the naturality of Khovanov homology:
    \begin{equation*}
    \begin{tikzcd}
     \CKh(D)\ar[r,"\CKh(\Sigma)"] \ar[d,"\varphi"]& \CKh(D)\ar[d,"\varphi"] \\
     \CKh(D') \ar[r,"\CKh(\Sigma')"]& \CKh(D')
\end{tikzcd}.
\end{equation*}Here, the bottom line is the identity map. Therefore we have \[\CKh(\Sigma)\simeq\varphi^{-1}\circ\CKh(\Sigma')\circ\varphi\simeq\idd.\]
\end{proof}

\begin{lem}\label{lem: kappa hmtp involution}
    The flip map $\kappa$ is a homotopy involution, \textit{i.e.,} $\kappa^2 \simeq \idd_{\CKh(D)}$.
\end{lem}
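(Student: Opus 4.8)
The plan is to reduce the statement to three elementary facts and then finish with a one-line computation. Fix the reflection axis once and for all, so that $(D^*)^*=D$ on the nose; exchanging the roles of $D$ and $D^*$ in the definitions then gives honest maps $\eta_{D^*}\colon\CKh(D^*)\to\CKh(D)$ and $R_D\colon\CKh(D)\to\CKh(D^*)$, and $\kappa_{D^*}=R_D\circ\eta_{D^*}$. Since $\kappa=R_{D^*}\circ\eta_D$, we have $\kappa^2=R_{D^*}\circ\eta_D\circ R_{D^*}\circ\eta_D$, so it is enough to prove: (i) $\eta_{D^*}\circ\eta_D=\idd_{\CKh(D)}$; (ii) $R_{D^*}\circ R_D\simeq\idd_{\CKh(D)}$; and (iii) $\eta_D\circ R_{D^*}\simeq R_D\circ\eta_{D^*}$, i.e.\ $\eta_D\circ R_{D^*}\simeq\kappa_{D^*}$. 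Granting all three,
\[
\kappa^2=R_{D^*}\circ(\eta_D\circ R_{D^*})\circ\eta_D\simeq R_{D^*}\circ R_D\circ\eta_{D^*}\circ\eta_D\simeq\idd_{\CKh(D)}
\]
by (iii), then (ii) and (i).

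Fact (i) is immediate: $\eta_D$ reflects every circle of every resolution across the axis, and doing this twice is the identity on the cube of resolutions. For (ii), concatenating the flip cobordism from $D$ to $D^*$ with the flip cobordism from $D^*$ to $(D^*)^*=D$ --- both realized by rotating in the same sense --- produces precisely the trace of a full $2\pi$ rotation of $K$, whose induced map is chain homotopic to the identity by Lemma~\ref{lem: rotating by 2pi}. (We are free to pick this representative of $R_{D^*}$, since $R_{D^*}$ is only well defined up to homotopy, and we then use it consistently.)

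The real content is (iii), which is a naturality statement. The key point is that, although $\eta_D$ is defined combinatorially, it is in fact induced by the honest homeomorphism $\varphi$ of $\RR^3$ given by rotation by $\pi$ about the axis, which carries $D$ to $D^*$: on the projection plane $\varphi$ restricts to a reflection across the axis and it reverses the over/under data, so it identifies the cube of resolutions of $D$ with that of $D^*$ compatibly with the Frobenius structure (there are no signs to track over $\FF$). By the naturality of the cobordism maps under ambient homeomorphisms --- the same principle used in the proof of Lemma~\ref{lem: rotating by 2pi} --- we get $\eta_D\circ R_{D^*}\simeq\CKh(\varphi(\Sigma_{D^*}))\circ\eta_{D^*}$, where $\Sigma_{D^*}$ denotes the flip cobordism for $D^*$. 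Finally, writing $\varphi_t$ for rotation by $\pi t$ about the axis (so $\varphi=\varphi_1$, $\varphi_2=\idd_{\RR^3}$, and rotations about this axis compose additively), the cobordism $\Sigma_{D^*}$ is the trace of $\{\varphi_t\varphi(K)\}_{t\in[0,1]}=\{\varphi_{t+1}(K)\}_{t\in[0,1]}$, hence $\varphi(\Sigma_{D^*})$ is the trace of $\{\varphi_{t+2}(K)\}_{t\in[0,1]}=\{\varphi_t(K)\}_{t\in[0,1]}$, which is exactly $\Sigma_D$; therefore $\CKh(\varphi(\Sigma_{D^*}))\simeq R_D$, proving (iii). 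I expect the only genuine difficulty to be this last bookkeeping: one must pin down which homeomorphism realizes $\eta$ and check that it carries one flip cobordism onto the other, exploiting that $\varphi$ rotates about the same axis and that rotation by $2\pi$ is the identity.
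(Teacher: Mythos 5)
Your proof is correct and follows essentially the same route as the paper: your facts (i)--(iii) together amount to the paper's key homotopy $\eta_D\circ R_{D^*}\circ\eta_D\simeq R_D$, after which both arguments identify $R_{D^*}\circ R_D$ with the map of the full $2\pi$-rotation cobordism and invoke Lemma \ref{lem: rotating by 2pi}. The intertwining you use to justify (iii) is precisely the on-the-nose relation $\varphi^*\circ\eta_D=\eta_{\mathfrak{D}}\circ\varphi$ between a movie and its flipped movie that the paper itself uses in the proof of Theorem \ref{prop:well-definedness of KhI}, so your extra bookkeeping is sound rather than a new method.
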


\begin{proof}
  Note that 
    \[\eta_D \circ R_{D^*} \circ \eta_D \simeq R_{D},\]
    where $R_D: \CKh(D) \to \CKh(D^*)$ is induced by a sequence of Reidemeister moves rotating the diagram $D$ by $\pi t\, (t\in [0,1])$ about the axis to the diagram $D^*$.
 Then,
    \[\kappa_D^2 = \eta_D \circ R_{D^*} \circ \eta_D \circ R_{D^*} \simeq R_D\circ R_{D^*} \simeq id_{\CKh(D)}.\] Here, $R_D\circ R_{D^*}$ is the trace cobordism of rotating $D$ about the axis by $2\pi$. The claim then follows from Lemma \ref{lem: rotating by 2pi}.
\end{proof}

As an analog of involutive Heegaard Floer homology, we define the involutive Khovanov homology as follows.

\begin{defn}
    The \textit{involutive Khovanov chain complex}, denoted by $\CKhI(D)$, is a bigraded $\FF[Q]/Q^2$-module defined by \[\CKhI(D)\coloneqq \operatorname{Cone}(\CKh(D)\xrightarrow{Q\cdot(\idd-\kappa_D)}Q\cdot \CKh(D)[1]),\]where $D$ is a diagram of $K$, and $Q$ is a formal variable of bigrading $(1,0)$.
    The \textit{involutive Khovanov homology} of $K$, denoted by $\KhI(K)$, is the (co)homology of $\CKhI(D)$.
\end{defn}
\begin{warn}
    The reader should not confuse our version of involutive Khovanov homology, defined utilizing the \textit{intrinsic} symmetry, with the involutive Khovanov homology for strongly invertible knots defined by Sano \cite{sano2024involutive}, which is defined using strong inversions as the \textit{external symmetry}.
\end{warn}

The invariance of involutive Khovanov homology is a formal adaptation of the approach in \cite{hendricks2017involutive}. The only difference is that while a Heegaard diagram is associated to a $3$-manifold, a knot diagram is associated to an \textit{isotopy class of knots}, so we need to be slightly more careful when proving the invariance.

\begin{thm} \label{prop:well-definedness of KhI}
 Let $K\subset S^3$ be a link. The quasi-isomorphism class of the involutive Khovanov chain complex $\CKhI(D)$ is independent of the choice of link diagrams, and hence, it is an invariant of $K$.
\end{thm}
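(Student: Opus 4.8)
The plan is to mimic the invariance proof for involutive Heegaard Floer homology in \cite{hendricks2017involutive}, but upgraded to handle the fact that a knot diagram records an isotopy class of links rather than a $3$-manifold. Two diagrams $D_0$ and $D_1$ of the same link $K$ are related by a finite sequence of Reidemeister moves and planar isotopies; let $\varphi\colon\CKh(D_0)\to\CKh(D_1)$ be the induced chain homotopy equivalence. To conclude that $\CKhI(D_0)\simeq\CKhI(D_1)$ it suffices, by the usual mapping-cone algebra (e.g. the argument that a homotopy-commuting square of quasi-isomorphisms induces a quasi-isomorphism on cones), to produce a homotopy-commutative square
\begin{equation*}
\begin{tikzcd}
\CKh(D_0)\ar[r,"Q(\idd-\kappa_{D_0})"]\ar[d,"\varphi"] & Q\cdot\CKh(D_0)[1]\ar[d,"\varphi"]\\
\CKh(D_1)\ar[r,"Q(\idd-\kappa_{D_1})"] & Q\cdot\CKh(D_1)[1]
\end{tikzcd}
\end{equation*}
that is, the relation $\varphi\circ\kappa_{D_0}\simeq\kappa_{D_1}\circ\varphi$.

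First I would unwind $\kappa_{D_i}=R_{D_i^*}\circ\eta_{D_i}$ and reduce the square to two separate naturality statements: $(\mathrm{i})$ the algebraic identification $\eta$ is natural, i.e. for the induced map $\varphi^*\colon\CKh(D_0^*)\to\CKh(D_1^*)$ on flip diagrams one has $\varphi^*\circ\eta_{D_0}=\eta_{D_1}\circ\varphi$ on the nose (this is immediate because $\eta$ just relabels circles by reflection, and a Reidemeister move on $D$ reflects to the corresponding move on $D^*$, so the two algebraic identifications commute strictly); and $(\mathrm{ii})$ the topological identification $R$ is natural up to homotopy, i.e. $\varphi\circ R_{D_0^*}\simeq R_{D_1^*}\circ\varphi^*$. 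Statement $(\mathrm{ii})$ follows from the functoriality of Khovanov homology under link cobordisms in $S^3\times I$: both composites realize a cobordism from $D_0^*$ to $D_1$ obtained by concatenating the flip cobordism with the trace of the chosen Reidemeister sequence, and these two cobordisms are isotopic rel boundary because the "rotation by $\pi$" trace commutes with any Reidemeister-move trace up to boundary-fixing isotopy — exactly the commutation already used in the proof of Lemma \ref{lem: rotating by 2pi}. Combining $(\mathrm{i})$ and $(\mathrm{ii})$ gives $\varphi\circ\kappa_{D_0}\simeq\kappa_{D_1}\circ\varphi$, hence the square above homotopy-commutes.

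To finish, I would invoke the standard homological-algebra lemma (the same one underlying the well-definedness of $\CKhI$ up to quasi-isomorphism in \cite{hendricks2017involutive}): given a square of chain complexes that commutes up to a chosen homotopy, with vertical arrows chain homotopy equivalences, the induced map on mapping cones is a chain homotopy equivalence; here one also tracks the $\FF[Q]/Q^2$-module structure, which is preserved since $\varphi$ is $Q$-linear (it acts diagonally). Thus $\CKhI(D_0)\simeq\CKhI(D_1)$ as bigraded $\FF[Q]/Q^2$-modules, and since every pair of diagrams of $K$ is connected by such moves, the quasi-isomorphism type of $\CKhI(D)$, and in particular $\KhI(K)$, depends only on $K$.

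\textbf{Main obstacle.} The only genuine subtlety — and the reason the paper flags that "a knot diagram is associated to an isotopy class of knots" — is step $(\mathrm{ii})$: one must be careful that the cobordism map $R_{D^*}$ depends only on the isotopy class of the flip cobordism rel boundary and is compatible with Reidemeister-move traces, rather than on some ad hoc choice of move sequence. This requires the full $S^3\times I$ functoriality of \cite{morrison2022invariants} (not merely the $\RR^3\times I$ version, which has a sign ambiguity irrelevant over $\FF_2$ but also does not see the sweep-around relations), together with the observation that a rotation trace and a Reidemeister trace can be performed in either order. Everything else is formal mapping-cone bookkeeping.
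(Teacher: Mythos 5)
Your proposal is correct and takes essentially the same route as the paper's proof: strict (on-the-nose) naturality of $\eta$ under the flipped Reidemeister sequence, homotopy naturality of $R$ via functoriality applied to two rel-boundary isotopic cobordisms, and then the standard mapping-cone argument, which the paper records explicitly as the quasi-isomorphism $\begin{pmatrix}\varphi & 0\\ \gamma & \varphi\end{pmatrix}$ with $\gamma$ the chosen homotopy. The one inaccuracy is in your closing remark: the full $S^3\times I$ functoriality of Morrison--Walker--Wedrich is not needed here, since the flip cobordism and the Reidemeister traces all live in $\RR^3\times I$, so the fixed $\FF_2$-functorial theory the paper works with already suffices (and indeed the paper deduces $S^3$-functoriality later as a corollary of its main theorems).
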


\begin{proof}
    This follows from the same argument for the well-definedness of the involutive Heegaard Floer homology as in \cite{hendricks2017involutive}*{Proposition~2.7}. Suppose $D_1$ and $D_2$ are two diagrams of the same link $K$. Let \[\varphi\colon \CKh(D_1) \to \CKh(D_2)\] be the map induced by a sequence of Reidemeister moves from $D_1$ to $D_2$, and 
    \[\varphi^*\colon \CKh(D_1^*) \to \CKh(D_2^*)\]be the map induced by the sequence of Reidemeister moves obtained from the previous one by flipping each step. Then, from the definition of $\eta$, we have 
    \[\varphi^* \circ \eta_{D_1} = \eta_{D_2} \circ \varphi.\]

    From the definition of $\varphi^*$, it is clear that $R_{D_2^*}\circ \varphi^*$ and $\varphi \circ R_{D_1^*}$ are two maps induced by two sequences of Reidemeister moves from $D_1^*$ to $D_2$, and two cobordisms are isotopic relative to the boundary. These two maps are homotopic by functoriality. Therefore, we have 
    \[ \varphi \circ \kappa_{D_1}= \varphi \circ R_{D_1^*}\circ \eta_{D_1}\simeq R_{D_2^*} \circ \varphi^* \circ \eta_{D_1} = R_{D_2^*} \circ \eta_{D_2} \circ \varphi = \kappa_{D_2} \circ \varphi, \]
    and \[ \varphi \circ Q(\idd_{\CKh(D_1)} - \kappa_{D_1}) \simeq Q(\idd_{\CKh(D_2)}-\kappa_{D_2})\circ \varphi.\]Denote the homotopy by $\gamma\colon\CKh(D_1) \to Q\cdot\CKh(D_2)[1]$. Then the map
    \[\begin{pmatrix}
        \varphi & 0\\
        \gamma & \varphi
    \end{pmatrix}: \CKhI(D_1) \to \CKhI(D_2)\]
    yields the desired quasi-isomorphism by standard homological algebra.
\end{proof}

We now explicitly compute the flip map for the unknot and unlinks, which will be useful in the proof of Theorem \ref{thm kappa=id}. It turns out that the flip map is the identity map \textit{on the nose} in these cases.  As we will see in Section \ref{sec: discussions}, this only holds for the ordinary Khovanov homology with $\FF= \FF_2$ coefficients.

\begin{exmp}[the flip map for the unknot]
\label{exmp unknot kappa}

Let $D$ be the diagram of the unknot $U$ with no crossings. Then $D^*$ is the same as $D$ up to planar isotopy, and $\eta_D$ is the identity map $\idd_V$ on $\CKh(D^*) = \CKh(D) = V$.

    \begin{figure}[h!]
	\[{
		\fontsize{7pt}{9pt}\selectfont
		\def\svgscale{0.8}
\begingroup%
  \makeatletter%
  \providecommand\color[2][]{%
    \errmessage{(Inkscape) Color is used for the text in Inkscape, but the package 'color.sty' is not loaded}%
    \renewcommand\color[2][]{}%
  }%
  \providecommand\transparent[1]{%
    \errmessage{(Inkscape) Transparency is used (non-zero) for the text in Inkscape, but the package 'transparent.sty' is not loaded}%
    \renewcommand\transparent[1]{}%
  }%
  \providecommand\rotatebox[2]{#2}%
  \newcommand*\fsize{\dimexpr\f@size pt\relax}%
  \newcommand*\lineheight[1]{\fontsize{\fsize}{#1\fsize}\selectfont}%
  \ifx\svgwidth\undefined%
    \setlength{\unitlength}{412.73733035bp}%
    \ifx\svgscale\undefined%
      \relax%
    \else%
      \setlength{\unitlength}{\unitlength * \real{\svgscale}}%
    \fi%
  \else%
    \setlength{\unitlength}{\svgwidth}%
  \fi%
  \global\let\svgwidth\undefined%
  \global\let\svgscale\undefined%
  \makeatother%
  \begin{picture}(1,0.21266044)%
     \lineheight{1}%
 \setlength\tabcolsep{0pt}%
 \put(0,0){\includegraphics[width=\unitlength,page=1]{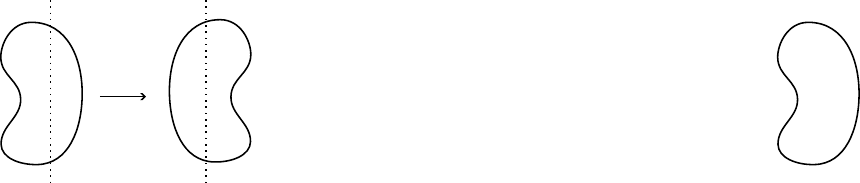}}%
 \put(0.13536988,0.11197517){\color[rgb]{0,0,0}\makebox(0,0)[lt]{\lineheight{1.25}\smash{\begin{tabular}[t]{l}$\eta$\end{tabular}}}}%
 \put(0,0){\includegraphics[width=\unitlength,page=2]{flip_on_unknot.pdf}}%
 \put(0.31,0.11197517){\color[rgb]{0,0,0}\makebox(0,0)[lt]{\lineheight{1.25}\smash{\begin{tabular}[t]{l}An $R$-II move\end{tabular}}}}%
 \put(0,0){\includegraphics[width=\unitlength,page=3]{flip_on_unknot.pdf}}%
 \put(0.52,0.11197517){\color[rgb]{0,0,0}\makebox(0,0)[lt]{\lineheight{1.25}\smash{\begin{tabular}[t]{l}An $R$-I move\end{tabular}}}}%
 \put(0,0){\includegraphics[width=\unitlength,page=4]{flip_on_unknot.pdf}}%
 \put(0.73,0.11197517){\color[rgb]{0,0,0}\makebox(0,0)[lt]{\lineheight{1.25}\smash{\begin{tabular}[t]{l}Another $R$-I move\end{tabular}}}}%
 \put(0.73,0.07526124){\color[rgb]{0,0,0}\makebox(0,0)[lt]{\lineheight{1.25}\smash{\begin{tabular}[t]{l} and planar isotopy\end{tabular}}}}%
  \end{picture}%
\endgroup%

	}\]
	\caption{The flip map for the unknot $U$}
    \label{fig unknot kappa}
\end{figure}
    
    One way to realize the trace cobordism map $R_{D^*}\colon\CKh(D^*) \to \CKh(D)$ is depicted in Figure \ref{fig unknot kappa}, where we first perform a Reidemeister II move and then perform two Reidemeister I moves. A direct computation shows that 
    \[R_{D^*}\colon \CKh(D^*) \to \CKh(D) \]
   sends $1$ to $1$ and $X$ to $X$, so $\kappa_D\colon \CKh(D) \to \CKh(D) $ is the identity map, and 
    \[\KhI(U) \cong \Kh(U)\otimes_\FF\FF[Q]/Q^2.\]
    
    Note that there is a unique bigrading-preserving isomorphism from $\CKh(D^*)$ to $\CKh(D)$ with $\mathbb{F}$ coefficients, so the calculation is in fact automatic. We draw the diagram to indicate how we will realize the trace cobordism map $R_{D^*}$ in the general case when $D$ is a plat closure of a braid.
 
\end{exmp}

\begin{exmp}[unlinks with crossingless diagrams]\label{exmp unlink kappa}

    We first consider a simple case, where the unlink is displayed as in Figure \ref{fig:flip on unlink}. Let $D$ be this diagram of the $n$-component unlink $U_n$ with no crossings. In this position, different components of $D$ do not interact during the flipping process. Similar to the argument for the unknot, 
    \[\kappa_D\colon\CKh(D)\to \CKh(D)\]
    is the identity map, and 
    \[\KhI(U_n) \cong \Kh(U_n)\otimes_\FF\FF[Q]/Q^2.\]

    \begin{figure}[ht]
    	\[{
    		\fontsize{7pt}{9pt}\selectfont
    		\def\svgscale{0.8}
    		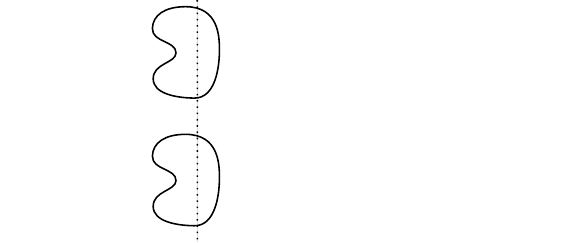
    	}\]
    	\caption{The flip map for the unlink $U_n$ with a particular crossingless diagram $D$}
    	\label{fig:flip on unlink}
    \end{figure}

    For later purposes, we also need to compute the flip map $\kappa_{\mathfrak{D}}$ for a diagram $\mathfrak{D}$ with no crossings, but with an arbitrary relative position with the rotational axis. The trace cobordism map $R_{\mathfrak{D}}\colon\CKh(\mathfrak{D}^*) \to \CKh(\mathfrak{D})$ could \textit{a priori} be complicated. However, the flip map $\kappa_{\mathfrak{D}}$ turns out to be always equal to the identity map by the following argument. Consider the following commutative diagram, constructed using the naturality of Khovanov homology as in the proof of Theorem \ref{prop:well-definedness of KhI}:
    \[
    \begin{tikzcd}[row sep = 10mm, column sep =  10mm]
        \CKh(D) \ar[r,"\kappa_D"] \ar[d,"\varphi"] \ar[dr, dashed, "H"]& \CKh(D) \ar[d,"\varphi"]\\
        \CKh(\mathfrak{D}) \ar[r,"\kappa_{\mathfrak{D}}"] & \CKh(\mathfrak{D})
    \end{tikzcd}.
    \]
    Here $\varphi$ is induced by a fixed sequence of Reidemeister moves from $D$ to $\mathfrak{D}$, and $H$ is a homotopy between $\varphi \circ \kappa_D$ and $\kappa_{\mathfrak{D}} \circ \varphi$. However, since $D$ and $\mathfrak{D}$ are both crossingless, each of the chain complex $\CKh(D)$ and $\CKh(\mathfrak{D})$ is concentrated in homological grading $0$. Therefore, we have 
   $H=0$, and the diagram above strictly commutes. Similarly, $\varphi\colon\CKh(D) \to \CKh(\mathfrak{D})$ is \textit{a priori} only a homotopy equivalence, but as both $D$ and $\mathfrak{D}$ are crossingless, $\varphi$ is an isomorphism of Khovanov chain complexes. Denote its inverse by $\varphi^{-1}$. Then we have
    \[\kappa_{\mathfrak{D}} = \varphi \circ \kappa_{D} \circ \varphi^{-1} = \varphi \circ \idd_{\CKh(D)} \circ\varphi^{-1} = \idd_{\CKh(\mathfrak{D})}.\]
    
\end{exmp}

\section{The flip map}
\label{sec:flip map}
In this section, we prove Theorem \ref{thm kappa=id} that the flip map $\kappa_D$ is chain homotopic to the identity map. The main difficulty is to compute the map $R_{D^*}\colon\CKh(D^*)\to \CKh(D)$ induced by Reidemeister moves that realizes the cobordism map. 

We begin by identifying a sequence of Reidemeister moves that is particularly easy to understand locally. In fact, as we claimed in Theorem \ref{thm kappa=id}, $R^{-1}_{D^*}$ turns out to be chain homotopic to the algebraic identification $\eta_{D}$. The strategy here is similar to the proof of \cite[Theorem 4.1]{alishahi2023khovanov}; we will discuss the relation with that work in more detail in Section \ref{sec: discussions}.

We first introduce some elementary topological facts regarding the flip cobordism on tangles before explicitly computing the flip map. Recall from the convention in Section \ref{sec:tangle invariants background} that an $(m,n)$-tangle has $2m$ endpoints on the top and $2n$ endpoints on the bottom. Denote the positive half twist on $2k$ strands by $\Delta_{2k}$, presented as the braid word
\[\Delta_{2k}=(\sigma_{2k-1}\sigma_{2k-2}\cdots\sigma_1)(\sigma_{2k-1}\cdots\sigma_{2})\cdots(\sigma_{2k-1}\sigma_{2k-2})\sigma_{2k-1}.\]

\begin{lem}\label{lem flip tangles}
	Let $T$ be a $(k,k)$-tangle represented by a single braid word $\sigma_i$ or $\sigma^{-1}_i$. Then $\Delta_{2k}^{-1}\circ T\circ \Delta_{2k}$ is isotopic to $T^{*}= \sigma_{2k-i}$ or $\sigma_{2k-i}^{-1}$ respectively. Moreover, they are related by a cobordism between tangles that rotates the tangle about the $y$-axis by $\pi$. See the left of Figure \ref{fig:flip_one_crossing} for illustration.
\end{lem}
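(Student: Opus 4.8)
The proof naturally divides into a purely algebraic braid identity and a geometric realization of the isotopy it encodes; the latter is the part we will actually use when computing the flip map.

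For the first assertion, the plan is to reduce to the classical fact that the half-twist $\Delta_{2k}$ --- the Garside element of $\Br_{2k}$ --- implements the flip automorphism by conjugation: $\Delta_{2k}\,\sigma_i^{\pm 1}\,\Delta_{2k}^{-1}=\sigma_{2k-i}^{\pm 1}$ for $1\le i\le 2k-1$. Since $\Delta_{2k}^2$ is central in $\Br_{2k}$, the same identity holds with $\Delta_{2k}$ and $\Delta_{2k}^{-1}$ interchanged, so $\Delta_{2k}^{-1}\circ T\circ\Delta_{2k}=T^{*}$ in $\Br_{2k}$, and in particular these two tangles are isotopic rel boundary. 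One may either cite this from a standard reference on braid groups or argue geometrically: $\Delta_{2k}$ is obtained by giving the band of $2k$ strands a half-turn, and pushing the single crossing $\sigma_i^{\pm1}$ through this half-turn carries it to the crossing between strands $2k-i$ and $2k-i+1$, while the left--right reflection and the over/under interchange induced by the half-turn each reverse the crossing sign, so their composite restores it, yielding $\sigma_{2k-i}^{\pm 1}=T^{*}$.

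For the ``moreover'' clause, I would make this isotopy explicit. Place the diagram of $T$ in a vertical strip of the plane with the $2k$ top and $2k$ bottom endpoints arranged symmetrically about the vertical $y$-axis, the $z$-direction recording over/under, and let $\Phi_t$ be the ambient isotopy of $\RR^3$ rotating about the $y$-axis by $\pi t$. Applying $\Phi_t$ to a braid representative of $T$ and projecting, $\Phi_0(T)=T$ and $\Phi_1(T)$ is $T$ reflected left--right with every crossing switched, i.e. $T^{*}$; meanwhile the $2k$ top endpoints and the $2k$ bottom endpoints sweep out two families of semicircular arcs which are precisely the half-twist braids $\Delta_{2k}^{-1}$ and $\Delta_{2k}$ (with signs matched to the first part). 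Restricting the rotation to the sub-box containing the crossing of $T$ and letting these two swept bands unwind gives an isotopy rel boundary from $\Delta_{2k}^{-1}\circ T\circ\Delta_{2k}$ to $T^{*}$, whose trace is exactly the cobordism ``rotate $T$ about the $y$-axis by $\pi$'' asserted in the lemma and drawn in Figure \ref{fig:flip_one_crossing}; this concrete form is what later permits replacing the rotation by an explicit sequence of Reidemeister moves.

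The main obstacle is the bookkeeping in the second step: one has to check that the two bands of semicircular arcs produced by the $\pi$-rotation really are $\Delta_{2k}$ and $\Delta_{2k}^{-1}$, rather than, say, two copies of $\Delta_{2k}$ or half-twists of the opposite handedness --- equivalently, that the geometric unwinding is consistent with the conjugation identity of the first step. This amounts to tracking the signs of the $z$-coordinates of the swept arcs during the rotation against a fixed crossing-sign convention, which is routine once the conventions are fixed; everything else is standard braid theory plus the picture.
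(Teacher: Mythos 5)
Your proposal is correct and follows essentially the same route as the paper, whose entire proof is ``perform the obvious Reidemeister moves'' together with the picture in Figure \ref{fig:flip_one_crossing}: the content of that picture is exactly your two steps, namely the Garside-element identity $\Delta_{2k}\,\sigma_i^{\pm1}\,\Delta_{2k}^{-1}=\sigma_{2k-i}^{\pm1}$ (equivalently, pushing the crossing through the half twist) and the realization of the conjugating isotopy as the trace of the $\pi$-rotation about the $y$-axis with the half-twist bands unwinding rel boundary. Your explicit handling of the handedness bookkeeping (which of $\Delta_{2k}$, $\Delta_{2k}^{-1}$ sits on top versus bottom, resolved by centrality of $\Delta_{2k}^2$ and a fixed rotation direction) is a reasonable fleshing-out of what the paper leaves to the figure, not a gap.
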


\begin{lem}\label{lem flip k cups}
	Let $T$ be the $(0,k)$-tangle with $k$ caps, connecting the first and second strands, ..., the $(2k-1)$-th and the $(2k)$-th strands. Then $T\circ\Delta_{2k}$ is isotopic to $T^*(=T)$. Similarly, if $T$ is the $(k,0)$-tangle that consists of $k$ cups, then $\Delta_{2k}^{-1}\circ T$ is isotopic to $T^*(=T)$. Moreover, they are related by a cobordism between tangles that rotates the tangle about the $y$-axis  by $\pi$. See the right of Figure \ref{fig:flip_one_crossing} for illustration.
\end{lem}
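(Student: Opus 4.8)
The statement is essentially a picture-level observation, so the plan is to make the isotopy explicit and then identify it with a rotation cobordism. First I would recall the half-twist tangle $\Delta_{2k}$: it is the positive braid on $2k$ strands that reverses their order, i.e.\ it takes the $j$-th point on top to the $(2k+1-j)$-th point on the bottom, realized as a rigid $\pi$-rotation of a rectangular neighborhood of the $2k$ strands about the vertical ($y$-)axis. The key elementary fact is that $\Delta_{2k}$ can be factored so that the bottom half of the rotation carries the top boundary of $T$ rigidly while the isotopy is supported in a neighborhood of the braid; concatenating $T$ (the $k$ caps connecting $(2i-1,2i)$) on top of $\Delta_{2k}$ then produces, after sliding the caps down through the half-twist, the mirror configuration of caps. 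But since $T$ consists of caps pairing \emph{adjacent} strands $2i-1,2i$, and the permutation $j\mapsto 2k+1-j$ sends the pair $\{2i-1,2i\}$ to the pair $\{2k+1-2i,\,2k+2-2i\}=\{2(k-i+1)-1,\,2(k-i+1)\}$, which is again an adjacent pair, the resulting tangle is again $k$ caps pairing adjacent strands — that is, $T$ itself (up to planar isotopy permuting the caps), which is also $T^*$ since $T$ is manifestly flip-invariant. The same argument with cups and $\Delta_{2k}^{-1}$ is the vertical mirror.

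Concretely, the steps I would carry out are: (1) fix coordinates so the $2k$ endpoints lie on the $x$-axis and the caps/cups live in a half-plane, and describe $\Delta_{2k}$ as the trace of the rotation $\rho_t$, $t\in[0,1]$, by angle $\pi t$ about the $y$-axis of a tubular neighborhood of the strands; (2) observe that $T\circ\Delta_{2k}$ is the diagram obtained from $T$ by applying $\rho_1$ to the top endpoints and recording the trace — but a cap connecting two adjacent points, when its endpoints are swept by $\rho_t$, traces out exactly the cobordism that rotates that cap about the $y$-axis by $\pi$, landing on the flipped cap $T^*$; (3) note that $\rho_1$ permutes the set of caps among themselves (as computed above), so the endpoint configuration returns to a diagram planar-isotopic to $T$, and since $T^*=T$ for this particular $T$ the isotopy claim follows; (4) the "moreover" clause is then immediate: the concatenated tangle cobordism built from the trace of $\rho_t$ is, by construction, the rotation of $T$ about the $y$-axis by $\pi$. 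For the cup case, reflect everything in a horizontal line and replace $\Delta_{2k}$ by $\Delta_{2k}^{-1}$ (the negative half-twist), which is the inverse rotation; the argument is identical.

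I expect the only genuine subtlety — and the part worth writing carefully — to be step (3), bookkeeping that the half-twist permutation really does send the distinguished adjacent pairings $\{2i-1,2i\}$ to adjacent pairings, so that no strand of $T$ is forced to cross another during the isotopy and the result is literally $T$ rather than some other crossingless matching. Everything else is a routine "read the picture" verification, so I would keep it brief and lean on Figure \ref{fig:flip_one_crossing} for the cup/cap case just as Lemma \ref{lem flip tangles} leans on it for the single-crossing case; in fact, both lemmas can be seen as special cases of the general principle that $\Delta_{2k}^{-1}\circ(-)\circ\Delta_{2k}$ implements the flip $(-)^*$ on tangles and is realized topologically by a $\pi$-rotation about the $y$-axis, and I would phrase the proof so as to make that uniform statement transparent.
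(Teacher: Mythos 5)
Your proposal is correct and matches the paper's approach: the paper disposes of both Lemma \ref{lem flip tangles} and Lemma \ref{lem flip k cups} with the single remark that one performs the obvious Reidemeister moves as in Figure \ref{fig:flip_one_crossing}, i.e.\ exactly the ``$\Delta_{2k}$ is the trace of a $\pi$-rotation, slide the caps/cups through, and observe that adjacent pairings go to adjacent pairings'' argument you spell out. Your extra bookkeeping in step (3) is a fine (and correct) elaboration, but nothing beyond the paper's intended picture-level verification.
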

\begin{figure}[ht]
	\[{
		\fontsize{7pt}{9pt}\selectfont
		\def\svgscale{1}
		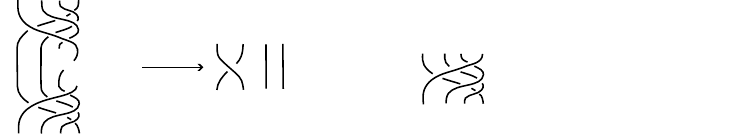
	}\]
	\caption{Diagrammatic illustrations of Lemmas \ref{lem flip tangles} and \ref{lem flip k cups}. Left: apply a pair of half twists to flip a simple braid. Right: apply a half twist to flip the caps.}
	\label{fig:flip_one_crossing}
\end{figure}
The proofs are by performing the obvious Reidemeister moves; see Figure \ref{fig:flip_one_crossing}.

 We can now explicitly write down the flip map. The idea of the construction is that the half twist can serve as the ``universal object'' that realizes the flip cobordism. 

\begin{con} \label{con: flip map}Let $K\subset S^3$ be a link, and $D$ be a diagram for $K$ which is the plat closure of a braid with $2k$ strands and $n$ crossings, presented vertically. We construct a sequence of Reidemeister moves from $D^*$ to $D$, as follows.

    \begin{enumerate}[i.]
    \item Decompose $D$ as a composition of $n+2$ pieces: \[D=T_0\circ T_1\circ\cdots\circ T_n\circ T_{n+1},\] where $T_0$ and $T_{n+1}$ contain $k$ caps and cups respectively, and each $T_i$ is a tangle with a single crossing for $i=1,2,\dots,n$. Then $D^*=T^*_0\circ T^*_1\circ\cdots\circ T^*_n\circ T^*_{n+1}$, where $T^*_i$ is the flip of $T_i$ across the $y$-axis.
    
    \item Introduce a pair of canceling half twists $\Delta_{2k} \circ \Delta_{2k}^{-1}$ between $T^*_i$ and $T^*_{i+1}$ ($0\le i\le n$) via a sequence of Reidemeister II moves. 
    
    \item Isotope $\Delta^{-1}_{2k} \circ T^*_i \circ \Delta_{2k}$ to $T_i$ for $1\le i\le n$, using Reidemeister moves described in Lemma \ref{lem flip tangles}.
    
    \item Isotope $T_0^*\circ\Delta_{2k}$ to $T_0$  and $\Delta_{2k}^{-1}\circ T^*_{n+1} $ to $T_{n+1}$, using Reidemeister I and II moves described in Lemma \ref{lem flip k cups}.
\end{enumerate}
\end{con}

See Figure \ref{fig:main contruction} below for an example.

 \begin{figure}[ht]
	\[{
		\fontsize{7pt}{9pt}\selectfont
		\def\svgscale{1}
		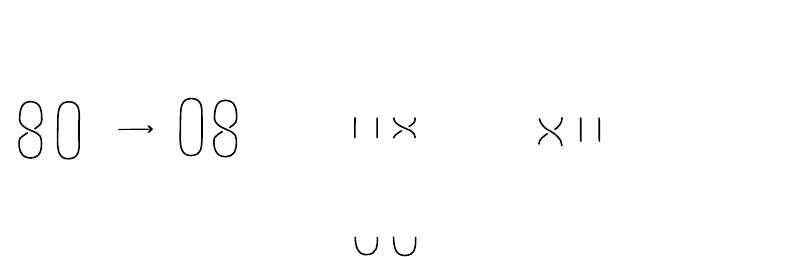
	}\]
	\caption{An example of the flip map for $D = T_0\circ T_1\circ T_2$, where we apply $\mathcal{F}$ to each of the diagram and apply the corresponding maps in Equation (\ref{eqn decompose kappa}).}
	\label{fig:main contruction}
\end{figure}

By Lemma \ref{lem:concatenation of tangles}, we have canonical identifications of chain complexes
\begin{align}
    \label{eqn:cutting into tangles}
    \begin{split}
         \CKh(D)&\cong \caf(T_0)\otimes\caf(T_1)\otimes\cdots\otimes \caf(T_n)\otimes\caf(T_{n+1}),\\
     \CKh(D^*)&\cong \caf(T_0^*)\otimes\caf(T_1^*)\otimes\cdots\otimes \caf(T_n^*)\otimes\caf(T_{n+1}^*),
    \end{split}
\end{align}
where the tensor products are taken over the algebra $H^k$.

To compute the flip map $\kappa$, we note that Construction \ref{con: flip map} induces a sequence of maps on tangle invariants:
\begin{align}\label{eqn decompose kappa}
	\begin{split}
		\CKh(D)&\xrightarrow{\eta} \CKh(D^*)\\
		&\cong\caf(T_0^{*})\otimes\caf(T^*_1)\otimes\cdots\otimes \caf(T^*_n)\otimes\caf(T^*_{n+1})\\
		&=\caf(T^*_0)\otimes I_k\otimes \caf(T^*_1)\otimes\cdots\otimes I_k\otimes\caf(T^*_{n+1})\\
		&\xrightarrow{\Omega}\caf(T^*_0)\otimes\left(\caf(\Delta_{2k})\otimes\caf(\Delta_{2k}^{-1})\right)\otimes \caf(T^*_1)\otimes\cdots\otimes\caf(T^*_{n+1})\\
		&=\left(\caf(T^*_0)\otimes\caf(\Delta_{2k})\right)\otimes\left(\caf(\Delta_{2k}^{-1})\otimes\caf(T^*_1)\otimes\caf(\Delta_{2k})\right)\otimes\dots\otimes\left(\caf(\Delta_{2k}^{-1})\otimes\caf(T^*_{n+1})\right)\\
		&\xrightarrow{\rho}\left(\caf(T^*_0)\otimes\caf(\Delta_{2k})\right)\otimes\caf(T_1)\otimes\cdots\otimes\left(\caf(\Delta_{2k}^{-1})\otimes\caf(T^*_{n+1})\right)\\
		&\xrightarrow{\rho_0}\caf(T_0)\otimes\caf(T_1)\otimes\cdots\otimes\caf(T_{n+1})\\
		&\cong\CKh(D).
	\end{split}
\end{align}
We explain the notations appeared in (\ref{eqn decompose kappa}).
\begin{itemize}
    \item $\mathcal{F}(T)$ is Khovanov's tangle invariant \cite{khovanov2002functor}, reviewed in Section \ref{sec:tangle invariants background}.

    \item All tensor products are taken over the algebra $H^{k}$.

    \item $I_k$ is the identity $(H^k,H^k)$-bimodule associated to the trivial tangle with $2k$ strands.

    \item $\Omega$, $\rho$, and $\rho_0$ are maps from steps ii, iii, and iv of Construction \ref{con: flip map} respectively.

    \item The second and the last row are identifications in Equation (\ref{eqn:cutting into tangles}). 
\end{itemize}

\begin{prop}
    The map described in Equation (\ref{eqn decompose kappa}) is chain homotopic to the flip map $\kappa_D$.
\end{prop}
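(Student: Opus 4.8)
The strategy is to expand both sides as composites of elementary cobordism maps and match them. The composite in Equation~(\ref{eqn decompose kappa}) begins with the same algebraic identification $\eta$ that appears in $\kappa_D=R_{D^*}\circ\eta_D$, so it suffices to prove that the remaining portion $\rho_0\circ\rho\circ\Omega$, transported through the gluing isomorphisms of Equation~(\ref{eqn:cutting into tangles}), is chain homotopic to $R_{D^*}\colon\CKh(D^*)\to\CKh(D)$. Since $R_{D^*}$ is defined, up to chain homotopy, as the cobordism map of \emph{any} movie presenting the flip cobordism (by the functoriality of Khovanov homology recalled in Section~\ref{sec:bg}; over $\FF_2$ there is no sign ambiguity), the proof has three parts: (a) Construction~\ref{con: flip map} is a legitimate movie from $D^*$ to $D$; (b) the cobordism it presents is isotopic rel boundary to the flip cobordism; (c) the chain map of this movie is, through the tangle gluing formalism, equal to $\rho_0\circ\rho\circ\Omega$.

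Part~(a) is immediate: step~ii of the construction is a sequence of Reidemeister~II moves creating the canceling pairs $\Delta_{2k}\circ\Delta_{2k}^{-1}$, and steps~iii and~iv are exactly the Reidemeister moves provided by Lemmas~\ref{lem flip tangles} and~\ref{lem flip k cups}; their composite carries $D^*=T_0^*\circ\cdots\circ T_{n+1}^*$ to $D=T_0\circ\cdots\circ T_{n+1}$. Part~(b) is the heart of the argument. Up to isotopy rel boundary the Reidemeister~II insertions of step~ii are product cobordisms; Lemmas~\ref{lem flip tangles} and~\ref{lem flip k cups} assert that the moves of steps~iii and~iv are realized by cobordisms rotating each local tangle about the vertical axis by $\pi$. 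Regarding $\Delta_{2k}$ as the trace of rotating the $2k$ strands by $\pi t$, the inserted half twists act as collars along which the $\pi$-rotation of one tangle piece agrees with the $\pi$-rotation of the adjacent piece; stacking the local rotation cobordisms of steps~iii and~iv therefore reassembles the trace of the global $\pi$-rotation of $D$, which is by definition the flip cobordism. The delicate point — and the step I expect to be the main obstacle — is precisely this compatibility of the local rotation cobordisms along the half-twist collars, together with checking that the movie introduces no births or deaths, so that the resulting cobordism is this product (isotopy) cobordism and not, say, one differing by a rotation by $2\pi$ as in Lemma~\ref{lem: rotating by 2pi}. I expect this to follow from the explicit isotopies indicated in Figures~\ref{fig:flip_one_crossing} and~\ref{fig:main contruction}.

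For part~(c), the gluing theorem (Lemma~\ref{lem:concatenation of tangles}) furnishes the identifications of Equation~(\ref{eqn:cutting into tangles}) compatibly with the tangle decompositions of $D$ and $D^*$, and the functoriality of the tangle invariants (Lemma~\ref{lem:functoriality of tangle invariants}) is compatible with gluing: a movie supported in disjoint vertical slabs induces the $H^k$-tensor product of the tangle-level cobordism maps of those slabs. Applying this to the movie of Construction~\ref{con: flip map}, the step-ii insertions contribute the homotopy equivalences $I_k\xrightarrow{\ \sim\ }\caf(\Delta_{2k})\otimes_{H^k}\caf(\Delta_{2k}^{-1})$, assembled into $\Omega$; step~iii contributes the cobordism maps $\caf(\Delta_{2k}^{-1})\otimes\caf(T_i^*)\otimes\caf(\Delta_{2k})\to\caf(T_i)$ of Lemma~\ref{lem flip tangles} for $1\le i\le n$, assembled into $\rho$; and step~iv contributes the cobordism maps $\caf(T_0^*)\otimes\caf(\Delta_{2k})\to\caf(T_0)$ and $\caf(\Delta_{2k}^{-1})\otimes\caf(T_{n+1}^*)\to\caf(T_{n+1})$ of Lemma~\ref{lem flip k cups}, assembled into $\rho_0$. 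Hence the chain map of the movie equals $\rho_0\circ\rho\circ\Omega$. Combining this with part~(b) and functoriality gives $\rho_0\circ\rho\circ\Omega\simeq R_{D^*}$, so precomposing with $\eta$ yields that the map of Equation~(\ref{eqn decompose kappa}) is chain homotopic to $R_{D^*}\circ\eta_D=\kappa_D$, as claimed.
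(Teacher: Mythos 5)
Your proposal is correct and follows essentially the same route as the paper, which proves the proposition by appealing to Lemmas \ref{lem flip tangles} and \ref{lem flip k cups} together with the naturality (functoriality) of Khovanov homology; your parts (a)--(c) simply spell out in detail how those lemmas and the gluing formalism of Lemmas \ref{lem:concatenation of tangles} and \ref{lem:functoriality of tangle invariants} identify the movie of Construction \ref{con: flip map} with $\rho_0\circ\rho\circ\Omega$ and its cobordism with the flip cobordism.
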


\begin{proof}
    This follows from Lemmas \ref{lem flip tangles} and \ref{lem flip k cups}, and naturality of Khovanov homology.
\end{proof}

The main technical ingredient for computing the map $\rho$ (up to homotopy) is the following lemma, which is inspired by the cup-sliding arguments in \cite{rozansky2010categorification,willis2021khovanov}. Roughly speaking, we replace the map $\rho$ in (\ref{eqn decompose kappa}) by a homotopic one such that the resulting map is formed by the flip map on resolutions plus some homotopy.
\begin{lem}\label{lem algebraic flipping of an elementary tangle}
    Let $T$ be a $(k,k)$-tangle representing a single braid word $\sigma_i$ (resp. $\sigma^{-1}_i$) and $T^*$ be its flipped tangle representing $\sigma_{2k-i}$ (resp. $\sigma^{-1}_{2k-i}$). There exists a map \[\varphi\colon \caf(\Delta_{2k}^{-1}\circ T^{*} \circ \Delta_{2k}) \to \caf(T)\]with the following properties:

    \begin{enumerate}[i.]
    \item $\varphi$ is chain homotopic to the map induced by a sequence of Reidemeister moves realizing the flip cobordism.
    \item $\varphi$ is filtered with respect to the $\left\{0,1\right\}$-filtration arising from resolving the unique crossings in $T$ and $T^*$.
    \item The filtered pieces of $\varphi$ (with respect to the aforementioned filtrations) are equal to the maps induced by sequences of Reidemeister moves from $\caf(\Delta_{2k}^{-1}\circ T^{*}_{j} \circ \Delta_{2k}) \to \caf(T_j)$ ($j=0,1$) realizing the flip cobordism, where $T_j$ is the $j$-th resolution of $T$.  
    \end{enumerate}
\end{lem}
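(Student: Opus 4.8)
The plan is to prove Lemma~\ref{lem algebraic flipping of an elementary tangle} by combining a rigidity argument (Lemma~\ref{lem:rigidity of tangle invariants}) with an explicit cup-sliding computation for a single crossing. I will only treat the case $T = \sigma_i$, the case $\sigma_i^{-1}$ being entirely parallel (mirror the braid words throughout), and I may further reduce to the ``local'' situation where all strands not involved in the crossing or the half twists are left untouched, since the tangle invariant is a tensor product over $H^k$ and the maps in question are identities on the untouched tensor factors. The essential point is that $\Delta_{2k}^{-1}\circ T^* \circ \Delta_{2k}$ and $T$ are isotopic rel boundary (Lemma~\ref{lem flip tangles}), and both are invertible tangles in the sense of Lemma~\ref{lem:rigidity of tangle invariants} (a braid composed with braids on $2k$ strands), so \emph{any} two degree-$0$ homotopy equivalences between $\caf(\Delta_{2k}^{-1}\circ T^*\circ\Delta_{2k})$ and $\caf(T)$ are chain homotopic; in particular the map $R$ induced by the flip cobordism is the \emph{unique} such map up to homotopy. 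This is what frees us to replace $R$ by any convenient model $\varphi$, as long as $\varphi$ is also a degree-$0$ homotopy equivalence.

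First I would construct $\varphi$ explicitly. Write $T = \sigma_i$ as the mapping cone of a saddle map between its two resolutions $T_0$ (the identity braid) and $T_1$ (the single cup-cap, i.e.\ the Temperley--Lieb generator $e_i$), and similarly $T^* = \sigma_{2k-i}$ as the cone between $T_0^* = $ identity and $T_1^* = e_{2k-i}$. Tensoring with $\caf(\Delta_{2k})$ on the right and $\caf(\Delta_{2k}^{-1})$ on the left is exact (Lemma~\ref{lem:sweet bimodules}), so $\caf(\Delta_{2k}^{-1}\circ T^*\circ\Delta_{2k})$ is the mapping cone of the induced saddle map between $\caf(\Delta_{2k}^{-1}\circ e_{2k-i}\circ\Delta_{2k})$ and $\caf(\Delta_{2k}^{-1}\circ\Delta_{2k}) = \caf(T_0)$ (after cancelling the half twists in the $T_0$-resolution via the Reidemeister-II homotopy equivalence of Lemma~\ref{lem flip k cups}, or rather its braid analog). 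To define $\varphi$ I would take the mapping cone of a chain map between the resolution complexes: on the $T_0$-part, the Reidemeister-II cancellation $\caf(\Delta_{2k}^{-1}\circ\Delta_{2k}) \xrightarrow{\sim} I_k = \caf(T_0)$; on the $T_1$-part, the map $\caf(\Delta_{2k}^{-1}\circ e_{2k-i}\circ\Delta_{2k}) \xrightarrow{\sim} \caf(e_i) = \caf(T_1)$ coming from sliding the caps and cups through the half twists (Lemma~\ref{lem flip k cups}, applied locally to the cup/cap of $e_{2k-i}$, whose flip and half-twist conjugate is isotopic to $e_i$). The subtlety is that a cone of a chain map between the two resolution complexes only gives a well-defined map on the cone complexes if the relevant square commutes up to a \emph{specified} homotopy; so $\varphi$ genuinely has three pieces, a diagonal $(0,0)\to(0,0)$ piece, a diagonal $(1)\to(1)$ piece, and an off-diagonal $(0)\to(1)$ homotopy correction term raising the cube filtration. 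This is precisely the content of items (ii) and (iii): $\varphi$ is filtered for the $\{0,1\}$-filtration, and its filtered (= diagonal) pieces are the flip-cobordism maps for the resolutions $T_0, T_1$, which are themselves forced to be the identity-type Reidemeister-II and cup-slide maps. I would verify that the diagonal pieces are realized by honest Reidemeister-move sequences realizing the flip cobordism on the resolved tangles — this is just Lemma~\ref{lem flip k cups} plus functoriality of the tangle invariants (Lemma~\ref{lem:functoriality of tangle invariants}).

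The remaining point, item (i), is then: $\varphi$ is chain homotopic to the genuine flip-cobordism map $R$. Here I invoke rigidity: since $\varphi$ is built out of Reidemeister-II equivalences and cup-slides it is a degree-$0$ homotopy equivalence $\caf(\Delta_{2k}^{-1}\circ T^*\circ\Delta_{2k})\to\caf(T)$, and $R$ is another one, and the tangle $a = \Delta_{2k}^{-1}\circ T^*\circ\Delta_{2k}$ together with $a' = T$ satisfy the hypotheses of Lemma~\ref{lem:rigidity of tangle invariants} (each is a $2k$-strand braid, hence invertible), so $\varphi \simeq R$. One must check $\varphi$ really is a homotopy equivalence and not merely a chain map — this follows because both diagonal pieces are homotopy equivalences and a filtered map between finite filtered complexes whose associated graded pieces are quasi-isomorphisms (over a field these are homotopy equivalences of complexes of projective bimodules) is itself a homotopy equivalence, by a standard filtration/spectral-sequence or inductive mapping-cone argument.

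I expect the main obstacle to be the honest verification that the off-diagonal correction term in $\varphi$ can be chosen to make $\varphi$ a chain map while keeping it filtered — i.e.\ producing the homotopy $H$ filling the square of resolution complexes and checking that including it as the off-diagonal entry of a cone-of-cones yields a genuine chain map with the stated filtered pieces. Concretely this means tracking the saddle/differential edge maps through the half-twist conjugation and cup-slide isotopies and confirming compatibility; the cup-sliding technology of \cite{rozansky2010categorification,willis2021khovanov} is exactly what makes this manageable, and rigidity is what lets us avoid pinning down an explicit sequence of Reidemeister moves. A secondary, more bookkeeping-type obstacle is reducing the global statement (on $2k$ strands, with the crossing at position $i$ vs.\ $2k-i$) to the local two-or-three-strand picture, which requires care about which strands the half twist $\Delta_{2k}$ actually permutes and interacts with near the crossing.
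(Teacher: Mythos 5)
Your proposal is correct and, in outline, follows the same strategy as the paper: both write $\caf(\Delta_{2k}^{-1}\circ T^*\circ \Delta_{2k})$ and $\caf(T)$ as mapping cones of the saddle maps $f$ and $g$ between the two resolutions, take the diagonal pieces to be maps $R_0,R_1$ induced by the obvious Reidemeister sequences (Lemmas \ref{lem flip tangles} and \ref{lem flip k cups}) realizing the flip cobordism on the resolutions, obtain $g\circ R_0\simeq R_1\circ f$ from functoriality (Lemma \ref{lem:functoriality of tangle invariants}), and deduce property (i) from rigidity (Lemma \ref{lem:rigidity of tangle invariants}) applied to the two braids over $\FF_2$. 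Where you genuinely differ is the mechanism that produces the filtered chain map: you take the standard cone-of-a-homotopy-commutative-square construction, inserting a chosen filling homotopy $h$ as the off-diagonal entry so as to land directly in $\Cone(g)=\caf(T)$, and then argue the result is a homotopy equivalence because its diagonal pieces are (five lemma in the homotopy category); the paper instead notes that $R_0,R_1$ can be realized by only crossing-decreasing R-I/R-II moves, hence are very strong deformation retracts, applies the homological perturbation lemma following Willis, and uses the fact that $\caf(T_0),\caf(T_1)$ sit in a single homological degree to upgrade $g\simeq R_1\circ f\circ R_0^{-1}$ to an equality, identifying the perturbed cone with $\caf(T)$ on the nose. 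Your route is slightly more elementary (no perturbation machinery) at the cost of the extra, but standard, homotopy-equivalence check; the paper's route gives explicit formulas and the equivalence for free. Also, the ``main obstacle'' you flag is not one: once any homotopy $h$ with $dh+hd=gR_0+R_1f$ is chosen (it exists by functoriality, the two composite cobordisms being isotopic rel boundary), the matrix map is automatically a chain map with exactly the stated filtered pieces. Two minor imprecisions: the proposed reduction to a local two-or-three-strand picture is not actually available, since $\Delta_{2k}$ involves all $2k$ strands (your global construction never uses it, so no harm done); and the aside about quasi-isomorphisms of complexes of projective bimodules is both unnecessary and slightly off (sweet bimodules are projective only as one-sided modules) --- your diagonal pieces are already honest homotopy equivalences, which is all the cone argument needs.
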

See Figure \ref{fig:algebraic replacement } for a schematic illustration of $\varphi$.

\begin{figure}[ht]
	\[{
		\fontsize{7pt}{9pt}\selectfont
		\def\svgscale{1}
		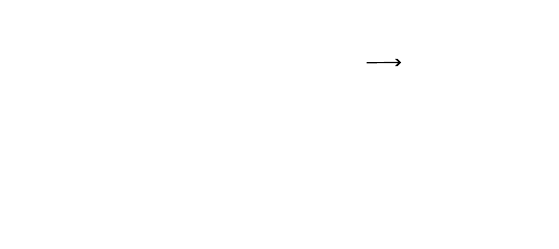
	}\]
	\caption{Schematic drawing of the map $\varphi\colon \caf(\Delta_{2k}^{-1}\circ T^{*} \circ \Delta_{2k}) \to \caf(T)$.}
	\label{fig:algebraic replacement }
\end{figure}

\begin{proof}
We prove the statement for $T=\sigma_i^{-1}$, and the other case that $T=\sigma_i$ follows the same proof by switching the $0$-and $1$-resolutions.

We first introduce some notations. Let $T_j$ and $T^*_j$ be the $j$-resolution ($j=0,1$) of $T$ and $T^*$ respectively. Let $f\colon\caf(T^*_0) \to \caf (T^*_1)$ be the map induced by the saddle cobordism between the $0$- and $1$-resolutions of $T^*$. By abuse of notation, we also denote the map
\[\idd\otimes f\otimes \idd\colon\caf(\Delta^{-1}_{2k}\circ T^*_0\circ \Delta_{2k}) \to \caf (\Delta^{-1}_{2k}\circ T^*_1\circ \Delta_{2k})\] by $f$. Then we have
\[\caf(\Delta^{-1}_{2k} \circ T^* \circ \Delta_{2k}) =\Cone\big(\caf(\Delta^{-1}_{2k}\circ T^*_0\circ \Delta_{2k}) \xrightarrow{f} \caf(\Delta^{-1}_{2k}\circ T^*_1\circ \Delta_{2k})\big).\]

Let $g\colon\caf(T_0) \to \caf(T_1)$ be the map induced by the saddle cobordism. Similarly, we have \[\caf(T) = \Cone\left( \caf(T_0)\xrightarrow{g}\caf(T_1)\right).\]
Let \[R_j\colon\caf(\Delta^{-1}_{2k}\circ T^*_j\circ \Delta_{2k}) \to \caf(T_j)\,(j=0,1)\] denotethe maps induced by sequences of Reidemeister moves that realize the flip cobordisms on $\Delta^{-1}_{2k}\circ T^*_j\circ \Delta_{2k}$ ($j=0,1$). Since the two cobordisms are isotopic relative to the boundary, by functoriality we have \[ g\circ R_0 \simeq R_1 \circ f.\] 
Hence, 
\[g \simeq R_1 \circ f \circ R_0^{-1},\]
where $R_0^{-1}$ is obtained by undoing the Reidemeister moves inducing $R_0$.

Note also that $R_0$ only involves Reidemeister moves I and II which decrease the number of crossings, and $R_1$ only involves Reidemeister moves II which decrease the number of crossings. Therefore, $R_j$ ($j=0,1$) are \textit{very strong deformation retracts} in the sense of \cite[Definition 2.9]{willis2021khovanov} by \cite[Lemma 2.13]{willis2021khovanov}. Hence, one can use the standard homological perturbation lemma (see, for example, \cite[Proposition 2.10]{willis2021khovanov}) to get another mapping cone 
\[\mathcal{C}(T)\coloneqq \Cone\left( \caf(T_0) \xrightarrow{R_1 \circ f \circ R_0^{-1}}  \caf(T_1)\right)\]
with an explicit chain homotopy equivalence \[H\colon\caf(\Delta^{-1}_{2k}\circ T^*\circ \Delta_{2k}) \to \mathcal{C}(T)\]  of the following form:
\begin{equation}
    \begin{tikzcd}
    \caf(\Delta^{-1}_{2k}\circ T^*\circ \Delta_{2k})\ar[d,"H"]&=& \caf(\Delta^{-1}_{2k}\circ T^*_0\circ \Delta_{2k}) [-1]\ar[r,"f"] \ar[d,"R_0"] \ar[dr, dashed, "h"']& \caf(\Delta^{-1}_{2k}\circ T^*_1\circ \Delta_{2k})\ar[d,"R_1"]\\
    \mathcal{C}(T)&=& \caf(T_0)[-1] \ar[r,"R_1\circ f\circ R^{-1}_0"']& \caf(T_1)
\end{tikzcd},
\end{equation}
where the dashed map $h$ is given by the homological perturbation lemma\footnote{We do not spell the explicit formula of $h$ out as we will not need it. Again, see \cite[Proposition 2.10]{willis2021khovanov} for details.}. 

Note that $T_0$ and $T_1$ are both crossingless tangles, so $\caf(T_0)$ and $\caf(T_1)$ are both chain complexes supported in a single homological grading. Therefore, if $g$ is chain homotopic to $R_1 \circ f \circ R_0^{-1},$ then in fact we have
\[g = R_1\circ f \circ R_0^{-1},\text{ and } \mathcal{C}(T) = \caf(T)\]equal as chain maps and chain complexes, not merely up to homotopy.

We claim that the map $\varphi\coloneqq H$ satisfies the three properties listed above. Properties ii and iii follow directly from the construction of $H$, and Property i follows from the rigidity statement in Lemma \ref{lem:rigidity of tangle invariants}, which implies all homotopy equivalences between $\caf(\Delta^{-1}_{2k}\circ T^*\circ \Delta_{2k}) $ and $\caf(T)$ are chain homotopic as we are working over $\FF=\FF_2$. Note that both $T$ and $\Delta_{2k}^{-1}\circ T^{*} \circ \Delta_{2k}$ are braids, so the concatenation conditions required in the lemma are satisfied.
\end{proof}

Before proving Theorem \ref{thm kappa=id}, recall from Example \ref{exmp unlink kappa} that if $D$ is a crossingless diagram of an unlink (with arbitrary relative position to the axis), then the flip map $\kappa_D\colon\CKh(D) \to \CKh(D)$ is (on the nose, not merely chain homotopic to) the identity map $\idd_{\CKh(D)}$. In other words, Theorem \ref{thm kappa=id} holds in a stronger form for unlinks with crossingless diagrams. The idea of the proof is then to reduce the computation to the case of crossingless diagrams using Lemma \ref{lem algebraic flipping of an elementary tangle}.

 \begin{proof}[Proof of Theorem \ref{thm kappa=id}]
Let $D$ be the plat closure of a braid representing $K$ with $2k$ strands and $n$ crossings. There is a natural $\left\{0,1\right\}^n$-grading given by the cube of resolutions on $\CKh(D)$. In each step of the map $\kappa_D = R_{D^*}\circ \eta_D$ as in Equation (\ref{eqn decompose kappa}), there is a similar $\left\{0,1\right\}^n$-grading defined by only considering the resolutions associated to the $n$ crossings coming from $D$. We call this the \textit{cubical grading}. Note that each of the maps $\eta$, $\Omega$ and $\rho_0$ preserves the cubical grading: it follows from definition for $\eta$ and the fact that Reidemeister moves involved do not affect the $n$ crossings coming from $D$ for $\Omega$ and $\rho_0$. 

It is a priori unclear how the map $\rho$ interacts with the cubical grading as it in general involves Reidemeister III moves that moving these $n$ crossings. Nevertheless, by applying Lemma \ref{lem algebraic flipping of an elementary tangle} to each tangle $T^*_i$ for $i=1,\dots,n$, we can replace $\rho$ by another map $\rho'$, such that 
\begin{enumerate}[i.]
    \item $\rho \simeq \rho'.$
    \item $\rho'$ is filtered with respect to the $\left\{0,1\right\}^n$-grading.
    \item The cubical grading-preserving piece of $\rho'$ is the one induced by a sequence of Reidemeister moves from $\Delta^{-1}_{2k}\circ (T^*_i)_0\circ \Delta_{2k} $ to $(T_i)_0$ realizing the flip cobordism, and from $\Delta^{-1}_{2k}\circ (T^*_i)_1\circ \Delta_{2k} $ to $(T_i)_1 $ respectively, where $(T_i)_{0}$ (resp. $(T_i)_1$) is the $0$(resp. $1$)-resolution of $T_i$, for $i=1,2,\dots,n$.
\end{enumerate}

We now replace $\rho$ by $\rho'$ and define 
\[\kappa' \coloneqq \rho_0\circ \rho' \circ \Omega \circ \eta\colon\CKh(D) \to \CKh(D).\]By properties i and ii above, $\kappa'$ is chain homotopic to $\kappa$ and is filtered with respect to the $\left\{0,1\right\}^n$-grading. Moreover, $\kappa$ preserves the homological grading on $\CKh(D)$, and so does $\kappa'$. Note that the homological grading on $\CKh(D)$ is just the sum of the $\{0,1\}^n$-grading (with some shift that only depends on $D$). We conclude that $\kappa'$ actually \textit{preserves} the cubical grading. In other words, it sends each summand $\CKh(D_v)$ in the resolution cube to the corresponding summand $\CKh(D_v)$ for all $v\in \left\{0,1\right\}^n$. 

By property iii above, the restriction of $\kappa'|_{D_v}\colon\CKh(D_v) \to \CKh(D_v)$ coincides with the flip map $\kappa_{D_v}$ on $D_v$, which is a crossingless diagram of an unlink. By Example \ref{exmp unlink kappa}, $\kappa'|_{D_v}$ is the identity map. Hence, it follows that $\kappa$ is chain homotopic to the identity map on $\CKh(D)$.

Finally, when $D$ is not a plat closure of a braid, an argument similar to the proof of Lemma \ref{lem: rotating by 2pi} (see also \cite[Lemma 3.2]{morrison2022invariants}) shows that $\kappa$ is still chain homotopic to the identity map, which concludes the proof.
 \end{proof}

 \begin{rem}
     The main ingredient of the proof, Lemma \ref{lem algebraic flipping of an elementary tangle}, also works at the level of Bar-Natan's tangle invariants in \cite{bar2005khovanov} by adapting the corresponding arguments as in \cite{willis2021khovanov}*{Corollary~2.14}. The rigidity result is stated using the term \textit{Kh-simple} in \cite{bar2005khovanov}*{Section~8.3}. In particular, one can obtain similar results on the Bar-Natan's deformation of Khovanov homology. See Section \ref{sec:Bar-Natan deformation} for further discussion.
 \end{rem}

As an unfortunate consequence of Theorem \ref{thm kappa=id}, the simplest form of the involutive Khovanov homology does not provide any more information than the ordinary Khovanov homology.

\begin{cor}\label{coro: KhI trivial}
	We have \[\KhI(K;\FF)\cong\Kh(K;\FF)\otimes_\FF \FF[Q]/Q^2\]for all $K$.
\end{cor}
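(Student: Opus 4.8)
The plan is to deduce Corollary \ref{coro: KhI trivial} as a formal consequence of Theorem \ref{thm kappa=id}, exactly parallel to the observation in involutive Heegaard Floer homology that a null-homotopic $\iota$ forces $\HFI$ to split. The point is that the only nontrivial ingredient in $\CKhI(D)$ is the twisting term $Q\cdot(\idd-\kappa_D)$, and Theorem \ref{thm kappa=id} makes this term null-homotopic; the mapping cone of a null-homotopic map splits off its two ends.

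Concretely, I would first invoke Theorem \ref{thm kappa=id} to fix a chain homotopy $h\colon\CKh(D)\to\CKh(D)$, of homological degree $-1$ and quantum degree $0$, with $\idd-\kappa_D = dh+hd$ (over $\FF=\FF_2$ this is the same as $\idd+\kappa_D=dh+hd$). Writing $\CKhI(D)=\CKh(D)\otimes_\FF\FF[Q]/Q^2$ with differential $d_I = d\otimes\idd + Q\cdot(\idd-\kappa_D)$ and its tautological $\FF[Q]/Q^2$-action, I would introduce the map $\Phi\coloneqq\idd+Q\cdot h$. Since $Q^2=0$ and we work over $\FF_2$, $\Phi$ is an $\FF[Q]/Q^2$-linear automorphism with $\Phi^{-1}=\Phi$, and a short computation --- in which the terms involving $Q\,dh$ cancel in pairs and all $Q^2$-terms vanish --- gives $\Phi\circ d_I\circ\Phi^{-1}=d\otimes\idd_{\FF[Q]/Q^2}$. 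Hence $\CKhI(D)$ is isomorphic, as a complex of $\FF[Q]/Q^2$-modules, to $\big(\CKh(D)\otimes_\FF\FF[Q]/Q^2,\,d\otimes\idd\big)$, whose homology is $\Kh(K)\otimes_\FF\FF[Q]/Q^2$ as an $\FF[Q]/Q^2$-module. Combined with the invariance established in Theorem \ref{prop:well-definedness of KhI}, this yields the corollary.

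I do not expect a genuine obstacle here. The only two points meriting care are bookkeeping the bigrading shift carried by the formal variable $Q$ and the $[1]$ in the cone, so that the stated isomorphism holds on the nose rather than merely up to an overall regrading, and confirming that $\Phi$ is genuinely $\FF[Q]/Q^2$-linear so that the splitting respects the module structure and not just the underlying complex.
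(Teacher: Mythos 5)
Your proposal is correct and is essentially the paper's argument: the corollary is stated as an immediate consequence of Theorem \ref{thm kappa=id}, the implicit point being exactly the standard homological algebra you spell out, namely that once $\idd+\kappa_D=dh+hd$ the twisted differential $d+Q(\idd+\kappa_D)$ is conjugate (via $\idd+Qh$, which is $\FF[Q]/Q^2$-linear and squares to the identity over $\FF_2$) to $d\otimes\idd$, so the cone splits. The only bookkeeping worth stating is that $h$ has bigrading $(-1,0)$ while $Q$ has bigrading $(1,0)$, so $\idd+Qh$ preserves the bigrading and the isomorphism holds as stated.
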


For comparison, in involutive Heegaard Floer homology, the similar relation \[\HFI(Y;\FF)\cong \HF(Y;\FF)\otimes \FF[Q]/Q^2\] holds when $Y$ is an L-space, i.e., for those with minimal possible rank of Heegaard Floer homology. On the other hand, for $Y=\Sigma(2,3,7)$, $\HFI(Y;\FF)$ has rank $4$ whereas $\HF(Y;\FF)$ has rank $3$.

Lipshitz and Sarkar \cite[Proposition 6.5]{lipshitz2014khovanov} used the flip map to prove that their Khovanov stable homotopy type is independent of the choice of (left or right) ladybug matchings. More specifically, they defined a homotopy equivalence \[\mathcal{X}^l(D)\xrightarrow{\eta}\mathcal{X}^r(D^*)\xrightarrow{R}\mathcal{X}^r(D).\]Here, $\mathcal{X}^l(D)$ (resp. $\mathcal{X}^r(D)$) is the Khovanov stable homotopy type constructed from the knot diagram $D$ using left (resp. right) ladybug matchings, $\eta$ is the obvious identification between $\mathcal{X}^l(D)$ and $\mathcal{X}^r(D^*)$, and $R$ is a map induced by a sequence of Reidemeister moves between $D$ and $D^*$, which has showed to be a homotopy equivalence. It is obvious from the construction that the $\eta$ above is a spectral lift of $\eta\colon \CKh(D)\to\CKh(D^*)$. We have the following corollary of Theorem \ref{thm kappa=id}.

\begin{cor}
    The homotopy equivalence $R\circ \eta \colon\mathcal{X}^l(D) \to \mathcal{X}^r(D)$ can be chosen canonically such that it induces the identity map on homology with $\FF_2$ coefficients, i.e., the following diagram commutes:
    \[
    \begin{tikzcd}[row sep = 10mm, column sep =  10mm]
        \mathcal{X}^l(D)\ar[r,"R\circ \eta"] \ar[d,"H_{*}"] & \mathcal{X}^r(D) \ar[d,"H_{*}"]\\
        \Kh(D) \ar[r,"\idd_{\Kh(D)}"] & \Kh(D)
    \end{tikzcd}.
    \]
    
\end{cor}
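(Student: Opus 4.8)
The plan is to pass to $\FF_2$-(co)homology, where all three spectra $\mathcal{X}^l(D)$, $\mathcal{X}^r(D^*)$, $\mathcal{X}^r(D)$ compute the Khovanov homology of the underlying link (the ladybug matching affects only the spectrum, not the homology), identify the map induced by $R\circ\eta$ with the flip map $\kappa_D$ on $\Kh(D)$, and then apply Theorem~\ref{thm kappa=id}. In one sentence: $R\circ\eta$ is a spectral lift of the chain-level flip map $\kappa_D=R_{D^*}\circ\eta_D$, and since $\kappa_D\simeq\idd_{\CKh(D)}$ it induces the identity on homology.

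I would carry this out in three steps. \textbf{Step 1.} As observed in the paragraph preceding the corollary, the spectral equivalence $\eta\colon\mathcal{X}^l(D)\to\mathcal{X}^r(D^*)$ is a lift of the algebraic identification $\eta_D\colon\CKh(D)\to\CKh(D^*)$ (it is the cellular identification of the two cube presentations obtained by flipping every resolution circle, using that the flip interchanges left and right ladybug matchings); hence $H_*(\eta)=(\eta_D)_*$. \textbf{Step 2.} The homotopy equivalence $R\colon\mathcal{X}^r(D^*)\to\mathcal{X}^r(D)$ attached in \cite{lipshitz2014khovanov} to a sequence of Reidemeister moves realizing the flip cobordism is a spectral lift of the corresponding Khovanov chain--homotopy equivalence, so the induced map on $\FF_2$-(co)homology lies in the chain homotopy class of $R_{D^*}$; by functoriality of the Khovanov homotopy type this is independent of the chosen Reidemeister sequence, and $H_*(R)=(R_{D^*})_*$. \textbf{Step 3.} Composing, $H_*(R\circ\eta)=(R_{D^*})_*\circ(\eta_D)_*=(\kappa_D)_*$; by Theorem~\ref{thm kappa=id}, $\kappa_D$ is chain homotopic to $\idd_{\CKh(D)}$, so $(\kappa_D)_*=\idd_{\Kh(D)}$, which is precisely the commutativity of the displayed square. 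For the word ``canonically'': take $R$ to be the spectral equivalence attached to the Reidemeister sequence of Construction~\ref{con: flip map} (or any other sequence realizing the flip cobordism); its homotopy class, hence that of $R\circ\eta$, is well-defined by functoriality, and by Step~2 the induced map on $\Kh$ is choice-independent and equals $\idd$.

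The only step that is not purely formal is Step~2: the compatibility of the Reidemeister-move equivalences of \cite{lipshitz2014khovanov} with the chain-level Khovanov maps on $\FF_2$-homology. This is essentially built into the construction of $\mathcal{X}(D)$ --- its reduced cellular cochain complex is $\CKh(D)$ and the move equivalences are designed to lift the standard chain--homotopy equivalences --- but it should be cited precisely (it is implicit in \cite{lipshitz2014khovanov}*{Proposition~6.5} and the surrounding functoriality discussion). The remaining subtlety, that $\eta$ and $R$ need only lift $\eta_D$ and $R_{D^*}$ up to homotopy rather than on the nose, is harmless since we compute only on homology. Everything else is a direct consequence of Theorem~\ref{thm kappa=id}.
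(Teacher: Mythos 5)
Your proposal is correct and follows the same route the paper intends: the spectral maps $\eta$ and $R$ lift the chain-level maps $\eta_D$ and $R_{D^*}$, so $R\circ\eta$ induces $(\kappa_D)_*$ on $\FF_2$-homology, which is the identity by Theorem \ref{thm kappa=id}. The paper leaves this as an immediate consequence of the preceding paragraph (where the lifting property of $\eta$ is noted) together with Theorem \ref{thm kappa=id}, so your more explicit three-step write-up adds nothing essentially different.
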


\section{The half sweep-around map}\label{sec: half sweep around}

In \cite{morrison2022invariants}, to establish the full functoriality of Khovanov homology for links in $S^3$ (instead of in $\mathbb{R}^3$), Morrison, Walker, and Wedrich proved that the sweep-around move induces the identity map on the Khovanov chain complex. Their proof proceeds by showing that the map induced by the front half sweep-around move agrees with that induced by the back half sweep-around move. In this section, using a variation of the previous result on the flip map for $1$-$1$ tangles, we prove that the map induced by the front half sweep-around move (and likewise the back one) is chain homotopic to the identity map after composing with the canonical identification, as described in Theorem \ref{thm alg=top 11 tangle}. Throughout the section, a $1$-$1$ tangle is a vertically presented tangle with $1$ endpoint on the top and bottom respectively.

We first write down the half sweep-around map precisely, and see how it is related to the flip map we have discussed in Section \ref{sec:flip map}. For simplicity, we assume all the $1$-$1$ tangles in this section are presented in the \textit{almost braid closure form} in the sense of \cite[Section 3]{morrison2022invariants}, which is obtained by taking the braid closure of the $k-1$ rightmost strands of a braid word in the braid group $\operatorname{Br}_{k}$. By \cite{morrison2022invariants}*{Lemma~3.1}, every $1$-$1$ tangle can be isotoped into an almost braid closure.

\begin{defn}
    Let $T$ be a $1$-$1$ tangle. Let $r(T)$ (resp. $l(T)$) denote the link obtained by taking the right-handed (resp. left-handed) closure of the $1$-$1$ tangle $T$ respectively. The \textit{half sweep-around map} \[\kappa^{sw}_{l(T)}\colon \CKh(l(T)) \to \CKh(l(T))\] is defined as the composition:
    \[\kappa^{sw}_{l(T)} = R^{sw}_{r(T)} \circ \eta^{sw}_{l(T)}, \]
     where 
     \[\eta^{sw}_{l(T)}\colon \CKh(l(T))\to \CKh(r(T))\] is the canonical identification of $\CKh(l(T))$ with $\CKh(r(T))$, sending each resolution of $l(T)$ to the corresponding one of $r(T)$, and 
     \[R^{sw}_{r(T)}\colon \CKh(r(T))\to \CKh(l(T)) \] is the map induced by a sequence of Reidemeister moves that realizes the isotopy that rotates the rightmost strand to the left from above the projection plane. See Figure \ref{fig:half sweep around} for illustration.
\end{defn}
    \begin{figure}[ht]
	\[{
		\fontsize{8pt}{10pt}\selectfont
		\def\svgscale{1.2}
		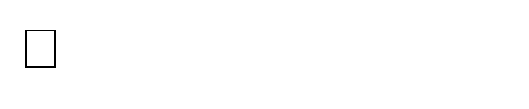
	}\]
	\caption{The half sweep-around map $\kappa^{sw}_{l(T)}$.}
	\label{fig:half sweep around}
\end{figure}
\begin{rem}
    Here we use the \textit{front sweep} to define the half sweep-around map, and one can also use the \textit{back sweep}. \cite[Theorem 3.3]{morrison2022invariants} asserts that these two sequences of Reidemeister moves induce the identical map when $T$ is in the almost braid closure form. Note that our definition of the half sweep-around map is slight different from the one in \cite{morrison2022invariants}: the latter is the $R^{sw}_{r(T)}$ map in our notation.
\end{rem}

Theorem \ref{thm alg=top 11 tangle} is then equivalent to the claim that the map $\kappa^{sw}_{l(T)}$ is chain homotopic to the identity map on $\CKh(l(T))$. The idea of proof is to express the half sweep-around map as the composition of the flip map and another \textit{rolling map} that we have better understanding, and prove the latter is also chain homotopic to the identity map. Roughly speaking, the rolling map for a $1$-$1$ tangle fixes the endpoints and rotates the interior of the tangle for $180$ degrees.

\begin{defn}
    Let $T$ be a $1$-$1$ tangle and $l(T)$ be its left-handed closure. Let $T^*$ denote the $1$-$1$ tangle obtained by rotating $T$ about the vertical axis in the plane passing through the two ends of the tangle $T$, and $l(T^*)$ denote the left-handed closure of $T^*$.  The \textit{rolling map}
    \[\kappa^{roll}_{l(T)}\colon \CKh(l(T)) \to \CKh(l(T))\]
    is defined as the composition 
    \[\kappa^{roll}_{l(T)} = R^{roll}_{l(T^*)} \circ \eta^{roll}_{l(T)},\]
    where \[\eta^{roll}_{l(T)}\colon \CKh(l(T)) \to \CKh(l(T^*))\] is the canonical identification, sending each resolution of $l(T)$ to the corresponding one in $l(T^*)$, and \[R^{roll}_{l(T^*)}\colon \CKh(l(T^*)) \to \CKh(l(T))\] is the map induced by a sequence of Reidemeister moves that realizes the trace of rotating $T^*$ by $\pi t \left(t\in \left[0,1\right]\right)$ about the vertical axis passing through the two ends of the tangle $T$.
 See Figure \ref{fig:rolling} for illustration.
\end{defn}
    \begin{figure}[ht]
	\[{
		\fontsize{8pt}{10pt}\selectfont
		\def\svgscale{1.2}
		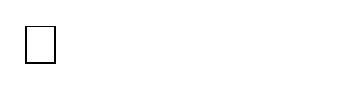
	}\]
	\caption{The rolling map $\kappa^{roll}_{l(T)}$. The dashed line represents the rotational axis.}
	\label{fig:rolling}
\end{figure}

\begin{lem}\label{lem: flip=sw+roll}
    Let $T$ be a $1$-$1$ tangle, and $l(T)$ be its left-handed closure. Let $\kappa_{l(T)}$, $\kappa^{sw}_{l(T)} $ $\kappa^{roll}_{l(T)}$ be the flip map, the half sweep-around map, and the rolling map respectively. Then,
    \[\kappa_{l(T)} \simeq \kappa^{sw}_{l(T)} \circ \kappa^{roll}_{l(T)}.\]
\end{lem}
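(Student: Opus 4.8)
The plan is to fix the flip axis of $l(T)$ to be the vertical line $\ell$ through the two endpoints of $T$; this is harmless since $(l(T))^*$ is well defined up to planar isotopy, independent of the axis. With this choice one reads off immediately that $(l(T))^* = r(T^*)$: the rigid $\pi$-rotation of $l(T) = T\cup A$ about $\ell$ carries the interior tangle $T$ to $T^*$ (exactly the rotation defining the rolling map) and swings the left closure arc $A$ to a right closure arc. Moreover, all of the canonical (algebraic) identifications appearing here are just relabellings of the circles in the resolution cube, and they are compatible with the decomposition of a diagram into its interior tangle and its closure arc. In particular, writing $\eta^{sw}_{l(T^*)}\colon\CKh(l(T^*))\to\CKh(r(T^*))$ for the closure-side-changing identification applied to $T^*$, we have, on the nose,
\[\eta_{l(T)} \;=\; \eta^{sw}_{l(T^*)}\circ\eta^{roll}_{l(T)}.\]

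Granting this, the proof reduces to a comparison of cobordism maps. Expanding definitions,
\[\kappa^{sw}_{l(T)}\circ\kappa^{roll}_{l(T)} \;=\; R^{sw}_{r(T)}\circ\eta^{sw}_{l(T)}\circ R^{roll}_{l(T^*)}\circ\eta^{roll}_{l(T)}.\]
The rolling isotopy of $T^*$ can be performed inside a small neighbourhood of $\ell$, disjoint from the closure arc, so the Reidemeister moves realizing $R^{roll}_{l(T^*)}$ may be taken identical to those realizing the analogous rolling map $R^{roll}_{r(T^*)}$ for the right-handed closure; since $\eta^{sw}$ acts as the identity away from the closure arc, the square
\[\eta^{sw}_{l(T)}\circ R^{roll}_{l(T^*)} \;=\; R^{roll}_{r(T^*)}\circ\eta^{sw}_{l(T^*)}\]
commutes at the chain level. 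Substituting this together with the identity $\eta^{sw}_{l(T^*)}\circ\eta^{roll}_{l(T)}=\eta_{l(T)}$ from the first paragraph, and recalling that $\kappa_{l(T)} = R_{(l(T))^*}\circ\eta_{l(T)} = R_{r(T^*)}\circ\eta_{l(T)}$, it remains to prove the purely topological identity
\[R^{sw}_{r(T)}\circ R^{roll}_{r(T^*)} \;\simeq\; R_{r(T^*)}.\]

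The heart of the argument — and the step I expect to be the main obstacle — is this last identity. The cobordism inducing $R_{r(T^*)}$ is the trace of the rigid $\pi$-rotation of $r(T^*) = T^*\cup A'$ about $\ell$. I would decompose this rotation into two stages: first rotate only the interior $T^*$ by $\pi$ about $\ell$ (keeping $A'$ fixed), then rotate only the arc $A'$ by $\pi$ about $\ell$. The trace of the first stage is precisely the rolling cobordism $r(T^*)\to r(T)$ realizing $R^{roll}_{r(T^*)}$. Performing the two stages in sequence yields a cobordism isotopic rel boundary to the rigid rotation — the passage from the sequential to the simultaneous rotation is a homotopy of paths of embeddings during which the moving piece never meets the static one, since the interior stays near $\ell$ while $A'$ stays at large radius. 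Finally, a direct check shows that the $\pi$-rotation of $A'$ about $\ell$ is isotopic, rel its endpoints and in the complement of $T$, to the over-the-top half sweep of $A'$ from the right to the left closure position: both isotopies carry the trivial arc $A'$ through the same half-space without ever linking $T$, so they are homotopic rel endpoints there. After fixing the direction of rotation so as to match the ``from above'' convention in the definition of $R^{sw}$, the trace of this isotopy is the half sweep-around cobordism realizing $R^{sw}_{r(T)}$. The entire isotopy is supported in a compact region, so the functoriality of Khovanov homology for cobordisms that are isotopic rel boundary (Section \ref{sec:bg}, cf. \cite{bar2005khovanov,morrison2022invariants}) gives $R^{sw}_{r(T)}\circ R^{roll}_{r(T^*)}\simeq R_{r(T^*)}$, completing the proof; only one of the two half sweeps is needed, and the equivalence of the front and back sweeps \cite{morrison2022invariants} is not required.
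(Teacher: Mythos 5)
Your proof is correct and takes essentially the same route as the paper's: identify $(l(T))^* = r(T^*)$, factor the algebraic identification as $\eta_{l(T)} = \eta^{sw}_{l(T^*)}\circ\eta^{roll}_{l(T)}$, commute $\eta^{sw}$ past the rolling cobordism map, and realize the flip cobordism as the interior rolling followed by the sweep of the closure arc, so that $R^{sw}_{r(T)}\circ R^{roll}_{r(T^*)}\simeq R_{(l(T))^*}$. The only difference is presentational: you spell out the sequential-versus-simultaneous rotation isotopy and the matching of the arc rotation with the front sweep, a step the paper justifies only by appealing to the preceding discussion and naturality.
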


\begin{proof}
The idea is to realize the Reidemeister moves $R_{l(T)}$ used in the definition of the flip map $\kappa_{l(T)}$ in two steps: first, rotate the $1$-$1$ tangle $T$ by $180$ degrees, and then rotate the closing strand. 

More precisely, note that 
\[(l(T))^* = r(T^*).\]By the discussion above and naturality, we have 
\[\eta^{sw}_{l(T^*)} \circ \eta^{roll}_{l(T)} = \eta_{l(T)}, \,\,\,R^{sw}_{r(T)} \circ R^{roll}_{r(T^*)} \simeq R_{r(T^*)} = R_{(l(T))^*}.\]
Additionally, we also have \[\eta^{sw}_{l(T)} \circ R^{roll}_{l(T^*)} \simeq R^{roll}_{r(T^*)} \circ \eta^{sw}_{l(T^*)}.\]
See Figure \ref{fig: roll and sw commute} below for an illustration, where the horizontal maps are induced by the Reidemeister moves realizing the rolling, and the vertical maps are the canonical identifications. To summarize, we have
\begin{align*}
    \kappa^{sw}_{l(T)} \circ \kappa^{roll}_{l(T)} & = R^{sw}_{r(T)} \circ \eta^{sw}_{l(T)} \circ R^{roll}_{l(T^*)} \circ \eta^{roll}_{l(T)} \\
    & \simeq R^{sw}_{r(T)} \circ R^{roll}_{r(T^*)} \circ \eta^{sw}_{l(T^*)} \circ \eta^{roll}_{l(T)}\\
    & \simeq R_{(l(T))^*} \circ \eta_{l(T)} = \kappa_{l(T)}. 
\end{align*}

\begin{figure}[ht] 
	\[{
		\fontsize{8pt}{10pt}\selectfont
		\def\svgscale{1.2}
		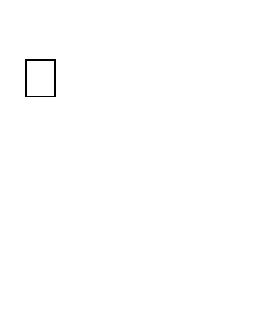
	}\]
	\caption{$\eta^{sw}_{l(T)} \circ R^{roll}_{l(T^*)} \simeq R^{roll}_{r(T^*)} \circ \eta^{sw}_{l(T^*)}.$}
    \label{fig: roll and sw commute}
\end{figure}
\end{proof}

Similar to Theorem \ref{thm kappa=id}, we can prove that the rolling map is also chain homotopic to the identity map.
\begin{prop}
\label{prop:rolling is trivial}
     Let $T$ be a $1$-$1$ tangle, and $l(T)$ be its left-handed closure. Then the rolling map is chain homotopic to the identity map:
     \[\kappa^{roll}_{l(T)} \simeq \idd_{\CKh(l(T))}.\]
\end{prop}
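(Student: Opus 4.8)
The plan is to run the argument of Theorem \ref{thm kappa=id} for the $1$-$1$ tangle $T$ in place of the plat‑closed braid $D$. Since every $1$-$1$ tangle can be isotoped into an almost braid closure (\cite{morrison2022invariants}*{Lemma~3.1}), I would assume $T$ comes from a braid word $\beta$ with $n$ crossings and decompose $T=T_1\circ\cdots\circ T_n$ into elementary tangles, each a single crossing $\sigma_j^{\pm1}$. Exactly as in Construction \ref{con: flip map}, I would realize the Reidemeister moves defining $R^{roll}_{l(T^*)}$ by inserting a canceling pair $\Delta\circ\Delta^{-1}$ of half twists between each $T_i^*$ and $T_{i+1}^*$, then using Lemma \ref{lem flip tangles} to isotope $\Delta^{-1}\circ T_i^*\circ\Delta$ to $T_i$ and Lemma \ref{lem flip k cups} (applied near the cups, caps, and closure arcs) to absorb the remaining half twists. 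Applying $\caf$ as in Equation (\ref{eqn decompose kappa}), this exhibits $\kappa^{roll}_{l(T)}$ as chain homotopic to a composite $\rho_0\circ\rho\circ\Omega\circ\eta^{roll}_{l(T)}$ in which $\eta^{roll}_{l(T)}$, $\Omega$, and $\rho_0$ preserve the $\{0,1\}^n$ cubical grading coming from the $n$ crossings of $T$.

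The key step is then, as before, to apply Lemma \ref{lem algebraic flipping of an elementary tangle} to each $T_i^*$: this replaces $\rho$ by a chain homotopic map $\rho'$ that is filtered for the cubical grading and whose grading‑preserving part on each resolution vertex is the cobordism map for the flip of the corresponding crossingless tangle. Setting $(\kappa^{roll})'\coloneqq\rho_0\circ\rho'\circ\Omega\circ\eta^{roll}_{l(T)}$, we obtain a map chain homotopic to $\kappa^{roll}_{l(T)}$ that is filtered with respect to the cubical grading; since $\kappa^{roll}_{l(T)}$ preserves the homological grading, which is the cubical grading up to a global shift, $(\kappa^{roll})'$ must in fact preserve the cubical grading, hence restricts on each vertex $v$ to a self-map of $\CKh(l(T_v))$. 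By property iii of Lemma \ref{lem algebraic flipping of an elementary tangle}, this restriction is the rolling map $\kappa^{roll}_{l(T_v)}$ of the crossingless $1$-$1$ tangle $T_v$. Finally, repeating the argument of Example \ref{exmp unlink kappa} — the Khovanov complex of a crossingless diagram is concentrated in a single homological degree, so the chain homotopy equivalence $\kappa^{roll}_{l(T_v)}$ is a genuine isomorphism and, by naturality, is conjugate to the corresponding map for a standard crossingless presentation, where it is manifestly the identity — shows $(\kappa^{roll})'|_v=\idd$ for every $v$, hence $\kappa^{roll}_{l(T)}\simeq\idd_{\CKh(l(T))}$. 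The case of a $1$-$1$ tangle not in almost braid closure form follows by naturality, as in the last paragraph of the proof of Theorem \ref{thm kappa=id}.

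The step I expect to be most delicate is the bookkeeping needed to fit the almost–braid–closure presentation into the tangle-invariant framework of Section \ref{sec:tangle invariants background}: the rightmost strands of $\beta$ are closed by arcs rather than by local cups and caps, so one must choose a compatible plat-type presentation of $l(T)$, check that the half-twist insertions and the application of Lemma \ref{lem algebraic flipping of an elementary tangle} respect it, and confirm that the invertibility/rigidity hypothesis of Lemma \ref{lem:rigidity of tangle invariants} — used for property i of Lemma \ref{lem algebraic flipping of an elementary tangle} — persists after closing up, which it does since $T$ is built from braids and the tensor products over $H^k$ are exact by Lemma \ref{lem:sweet bimodules}. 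Beyond this, everything is a direct transcription of the proof of Theorem \ref{thm kappa=id}.
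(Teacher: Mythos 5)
Your overall skeleton is the same as the paper's: put $T$ in almost braid closure form, realize the rolling cobordism by inserting canceling pairs of half twists between the elementary pieces, invoke Lemma \ref{lem algebraic flipping of an elementary tangle} to perturb the composite to a map filtered with respect to the cubical grading, use that the homological grading is the cubical grading up to a global shift to conclude the perturbed map preserves the cube, and finish with the crossingless base case as in Examples \ref{exmp unknot kappa} and \ref{exmp unlink kappa}.

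The gap is in the phrase ``exactly as in Construction \ref{con: flip map}.'' If the canceling half twists are inserted on \emph{all} strands of a plat-type presentation of $l(T)$, the resulting sequence of Reidemeister moves realizes the flip cobordism of $l(T)$ (it rotates the left closure arc along with everything else), and the intermediate diagrams are built from the flipped crossings $\sigma_{2k-j}^{\pm1}$ rather than from the rolled tangle $T^*$; the composite you would compute is $\kappa_{l(T)}$, not $\kappa^{roll}_{l(T)}$. That would merely reprove Theorem \ref{thm kappa=id}, and since the point of this proposition, via Lemma \ref{lem: flip=sw+roll}, is to separate $\kappa^{roll}_{l(T)}$ from $\kappa_{l(T)}$ in order to deduce Theorem \ref{thm alg=top 11 tangle}, the argument as written proves the wrong statement. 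The paper's proof makes exactly one modification to Construction \ref{con: flip map}, and it is the crux: write $l(T)=T_0\circ T_1\circ\cdots\circ T_n\circ T_{n+1}$ as a plat-type diagram on $2k$ strands in which strand $1$ is the left closure arc, strands $2,\dots,k+1$ carry the braid word, and the remaining strands are the closure arcs of the almost braid closure, and then insert the half twists only on the rightmost $2k-1$ strands, leaving strand $1$ untouched throughout the entire process; the caps and cups $T_0,T_{n+1}$ (pairing strand $1$ with strand $2$, and strand $j$ with strand $2k+3-j$) absorb these partial half twists as in Lemma \ref{lem flip k cups}, and the conjugated crossings are handled by the evident analogs of Lemma \ref{lem flip tangles} and Lemma \ref{lem algebraic flipping of an elementary tangle}, whose rigidity input still applies because every local piece is a braid. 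You flag the ``compatible plat-type presentation'' as the delicate step, so you have located the issue, but you do not resolve it, and this resolution is the only genuinely new content of the proof beyond Theorem \ref{thm kappa=id}. A smaller point: in the base case the rolling map of the standard crossingless $1$-$1$ tangle is not ``manifestly the identity''; as in Example \ref{exmp unknot kappa}, one still has to write down the Reidemeister moves rolling the strand past the fixed left closure arc and check the induced map, which is the explicit computation the paper performs for the tangle of Figure \ref{fig:a simple tangle}.
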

As we have proved that the flip map of $l(T)$ is chain homotopic to the identity map, it follows from Lemma \ref{lem: flip=sw+roll} that the half sweep-around map is also chain homotopic to the identity map, which is another formulation of Theorem \ref{thm alg=top 11 tangle}. 
\setcounter{section}{1}
\setcounter{thm}{6}
\begin{thm}
      Let $T$ be a $1$-$1$ tangle, and $l(T)$ be its left-handed closure. 
      Then, the algebraic identification
      \[\eta^{sw}_{l(T)}\colon \CKh(l(T)) \to \CKh(r(T))\] and the topological identification
      \[R^{sw}_{l(T)}\colon \CKh(l(T)) \to \CKh(r(T)) \]
      are chain homotopic. Equivalently, the composition $ \kappa^{sw}_{l(T)} = R^{sw}_{r(T)}\circ \eta^{sw}_{l(T)}$ is chain homotopic to the identity map $\idd_{\CKh(l(T))}$, where $R^{sw}_{r(T)}$ is the map induced by reversing the Reidemeister moves in the definition of $R^{sw}_{l(T)}$.
\end{thm}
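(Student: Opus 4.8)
The plan is to obtain the statement as an immediate formal consequence of the three results already in hand, and then to indicate how I would supply the one genuinely new ingredient, Proposition~\ref{prop:rolling is trivial}.

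First the deduction. By Lemma~\ref{lem: flip=sw+roll} we have $\kappa_{l(T)}\simeq \kappa^{sw}_{l(T)}\circ \kappa^{roll}_{l(T)}$; by Theorem~\ref{thm kappa=id} the left-hand side is chain homotopic to $\idd_{\CKh(l(T))}$; and by Proposition~\ref{prop:rolling is trivial} we may replace $\kappa^{roll}_{l(T)}$ by $\idd_{\CKh(l(T))}$ up to homotopy. Hence $\idd_{\CKh(l(T))}\simeq \kappa^{sw}_{l(T)}\circ \kappa^{roll}_{l(T)}\simeq \kappa^{sw}_{l(T)}$, which is the second formulation. For the equivalence with the first formulation, recall that $R^{sw}_{l(T)}$ and $R^{sw}_{r(T)}$ are induced by a sequence of Reidemeister moves and its reverse; the composite cobordism (sweep the closing strand across and back) is isotopic rel boundary to the product cobordism, so functoriality gives $R^{sw}_{r(T)}\circ R^{sw}_{l(T)}\simeq \idd$. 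Composing $\kappa^{sw}_{l(T)}=R^{sw}_{r(T)}\circ \eta^{sw}_{l(T)}\simeq \idd$ on the left with $R^{sw}_{l(T)}$ then yields $\eta^{sw}_{l(T)}\simeq R^{sw}_{l(T)}$.

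It remains to prove Proposition~\ref{prop:rolling is trivial}, which I would do by copying the proof of Theorem~\ref{thm kappa=id} almost verbatim in the $1$-$1$ tangle setting. Present $T$ in almost braid closure form and write the underlying braid word as a composition of elementary one-crossing tangles $T_1,\dots,T_n$, together with the closure arcs; between consecutive pieces introduce canceling pairs $\Delta_{2k}\circ\Delta_{2k}^{-1}$ by Reidemeister~II moves, and flip each sandwiched elementary piece using Lemmas~\ref{lem flip tangles} and~\ref{lem flip k cups}. Applying Lemma~\ref{lem algebraic flipping of an elementary tangle} to each piece, perturb the resulting composite to a map $(\kappa^{roll}_{l(T)})'$ that is chain homotopic to $\kappa^{roll}_{l(T)}$, is filtered with respect to the $\{0,1\}^n$ cube-of-resolutions grading coming from the $n$ crossings of $T$, and whose grading-preserving part restricts on each resolution to the rolling map there. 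Since $\kappa^{roll}_{l(T)}$ has homological degree zero and the homological grading equals the cube grading up to a global shift, $(\kappa^{roll}_{l(T)})'$ must in fact preserve the cube grading, and hence restricts on each vertex $v$ to the rolling map of the crossingless diagram $l(T_v)$. But $l(T_v)$ is a crossingless diagram of an unlink, so by the same argument as in Example~\ref{exmp unlink kappa} the rolling map there is the identity on the nose. Therefore $(\kappa^{roll}_{l(T)})'=\idd$ and $\kappa^{roll}_{l(T)}\simeq\idd$; the case of a general $1$-$1$ tangle diagram follows as at the end of the proof of Theorem~\ref{thm kappa=id}.

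The main obstacle is not the formal deduction but the verification that Lemma~\ref{lem algebraic flipping of an elementary tangle} applies here: one must check that each local tangle $\Delta_{2k}^{-1}\circ T^*_i\circ \Delta_{2k}$ remains invertible/Kh-simple so that the rigidity of Lemma~\ref{lem:rigidity of tangle invariants} can be invoked, and that the homological perturbation producing $(\kappa^{roll}_{l(T)})'$ goes through as stated. This is fine because those local tangles are again braids, so the hypotheses of Lemma~\ref{lem:rigidity of tangle invariants} hold verbatim; the only change from the plat-closure situation of Theorem~\ref{thm kappa=id} is the presence of the closure arcs, which do not participate in the local analysis on the tangle pieces. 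The one point of bookkeeping I would be careful about is the handedness of the closures throughout Lemma~\ref{lem: flip=sw+roll}, namely the identity $(l(T))^*=r(T^*)$ and the compatibility squares relating $\eta^{sw}$ and $R^{roll}$ recorded there.
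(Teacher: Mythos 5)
Your overall reduction is exactly the paper's: Lemma \ref{lem: flip=sw+roll} together with Theorem \ref{thm kappa=id} and Proposition \ref{prop:rolling is trivial} gives $\kappa^{sw}_{l(T)}\simeq \idd$, and your derivation of the equivalence between the two formulations (doing and then undoing the sweep moves induces a map homotopic to the identity) is fine. The genuine gap is in your sketch of Proposition \ref{prop:rolling is trivial}, which is the one new ingredient. You transplant Construction \ref{con: flip map} verbatim and insert canceling pairs of the \emph{full} half twist $\Delta_{2k}$ on all $2k$ strands of $l(T)$. That sequence of moves realizes the flip cobordism of the entire diagram: its source is the flip diagram $(l(T))^*=r(T^*)$, not $l(T^*)$, and the composite you build is therefore chain homotopic to the flip map $\kappa_{l(T)}$, not to $\kappa^{roll}_{l(T)}$. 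Asserting that it is homotopic to $\kappa^{roll}_{l(T)}$ begs the question: since $\kappa_{l(T)}\simeq\kappa^{sw}_{l(T)}\circ\kappa^{roll}_{l(T)}$ and $\kappa_{l(T)}\simeq\idd$, the claim $\kappa^{roll}_{l(T)}\simeq\kappa_{l(T)}$ is equivalent to the theorem being proved. Your closing remark that ``the closure arcs do not participate'' reflects the same confusion: in the correct construction the right closure arcs \emph{are} swept by the half twists, since they are part of the tangle being rolled; it is only the leftmost, left-closure strand that must stay fixed.

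The repair is the paper's: insert the canceling half twists only on the rightmost $2k-1$ strands (the strands of $T$, including its right closure arcs), leaving the leftmost strand untouched throughout, so that the moves realize the rolling cobordism from $l(T^*)$ to $l(T)$. With that change your filtration and rigidity argument does go through --- the conjugated one-crossing pieces are still braids, so Lemma \ref{lem:rigidity of tangle invariants} applies, and degree-zero upgrades ``filtered'' to ``cube-grading preserving'' --- but the base case changes as well: one needs that the rolling map of the closure of the trivial $1$-$1$ tangle is the identity, which is a separate explicit check in the style of Example \ref{exmp unknot kappa} (combined with the conjugation trick of Example \ref{exmp unlink kappa} for arbitrary crossingless positions), not literally the unlink flip computation you cite.
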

  \setcounter{section}{5}
\setcounter{thm}{6}

\begin{proof}[Proof of Proposition \ref{prop:rolling is trivial}]

Recall that the $1$-$1$ tangle $T$ is assumed to be in almost braid closure form, which is obtained by taking the braid closure of the $k-1$ rightmost strands of a braid word $w$ of length $n$ in the braid group $\Br_{k}$. Then, the link diagram $l(T)$ is of the form
\[l(T) = T_0\circ T_1\circ \cdots \circ T_n\circ T_{n+1},\]
Here, $T_0$ (resp. $T_{n+1}$ ) is the $(0,k)$(resp. $(k,0)$)-tangles of $k$ caps (resp. $k$ cups), closing up the first and second strand, and the $j$-th and the $(2k+3-j)$-th strands, for $j=3,\dots,k+1$, counted from left to right. For each $i=1,\dots,n$,  $T_i$ is the one-crossing tangle representing the $i$-th word in $w$ viewed as an element in $\Br_{2k}$ by including $\Br_{k}$ as a subgroup of $\Br_{2k}$ acting on the second, third, \dots, $(k+1)$-th strands from left to right. See Figure \ref{fig:rolling proof} for an example.

\begin{figure}[ht]
	\[{
		\fontsize{8pt}{10pt}\selectfont
		\def\svgscale{0.9}
\begingroup%
  \makeatletter%
  \providecommand\color[2][]{%
    \errmessage{(Inkscape) Color is used for the text in Inkscape, but the package 'color.sty' is not loaded}%
    \renewcommand\color[2][]{}%
  }%
  \providecommand\transparent[1]{%
    \errmessage{(Inkscape) Transparency is used (non-zero) for the text in Inkscape, but the package 'transparent.sty' is not loaded}%
    \renewcommand\transparent[1]{}%
  }%
  \providecommand\rotatebox[2]{#2}%
  \newcommand*\fsize{\dimexpr\f@size pt\relax}%
  \newcommand*\lineheight[1]{\fontsize{\fsize}{#1\fsize}\selectfont}%
  \ifx\svgwidth\undefined%
    \setlength{\unitlength}{245.16793514bp}%
    \ifx\svgscale\undefined%
      \relax%
    \else%
      \setlength{\unitlength}{\unitlength * \real{\svgscale}}%
    \fi%
  \else%
    \setlength{\unitlength}{\svgwidth}%
  \fi%
  \global\let\svgwidth\undefined%
  \global\let\svgscale\undefined%
  \makeatother%
  \begin{picture}(1,0.49521435)%
    \lineheight{1}%
    \setlength\tabcolsep{0pt}%
    \put(0.07975836,0.00688027){\color[rgb]{0,0,0}\makebox(0,0)[lt]{\lineheight{1.25}\smash{\begin{tabular}[t]{l}$T$\end{tabular}}}}%
    \put(0.645266,0.00300602){\color[rgb]{0,0,0}\makebox(0,0)[lt]{\lineheight{1.25}\smash{\begin{tabular}[t]{l}$l(T)$\end{tabular}}}}%
    \put(0,0){\includegraphics[width=\unitlength,page=1]{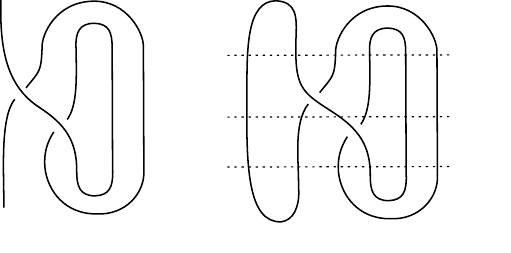}}%
    \put(0.90817428,0.43358556){\color[rgb]{0,0,0}\makebox(0,0)[lt]{\lineheight{1.25}\smash{\begin{tabular}[t]{l}$T_0$\end{tabular}}}}%
    \put(0.90817428,0.32014558){\color[rgb]{0,0,0}\makebox(0,0)[lt]{\lineheight{1.25}\smash{\begin{tabular}[t]{l}$T_1=\sigma_2^{-1}$\end{tabular}}}}%
    \put(0.90817428,0.21350176){\color[rgb]{0,0,0}\makebox(0,0)[lt]{\lineheight{1.25}\smash{\begin{tabular}[t]{l}$T_2=\sigma_3^{-1}$\end{tabular}}}}%
    \put(0.90817428,0.12197016){\color[rgb]{0,0,0}\makebox(0,0)[lt]{\lineheight{1.25}\smash{\begin{tabular}[t]{l}$T_3$\end{tabular}}}}%
  \end{picture}%
\endgroup%

	}\]
	\caption{A $1$-$1$ tangle $T$ in almost braid closure form, and the decomposition of its left closure $l(T)$.}
	\label{fig:rolling proof}
\end{figure}

We can apply a similar construction as in Construction \ref{con: flip map} to realize the map $\kappa^{roll}_{l(T)}$. The only difference is that we now insert positive and negative half twists only on the right $2k-1$ strands, leaving the leftmost strand untouched in the whole process. With an argument similar to the proof of Theorem \ref{thm kappa=id}, we can perturb $\kappa^{sw}_{l(T)}$ to be filtered with respect to the cubical grading, which further deduces that the map preserves the cubical grading and reduces the computation to the case where $T$ is a crossingless $1$-$1$ tangle. Then, as in Example \ref{exmp unlink kappa}, we can further reduce to the computation when $T$ is a $1$-$1$ tangle in Figure \ref{fig:a simple tangle}. An explicit computation similar to Example \ref{exmp unknot kappa} proves that the map $\kappa^{roll}_{l(T)}$ is the identity map in this case.
\begin{figure}[ht]
	\[{
		\fontsize{8pt}{10pt}\selectfont
		\def\svgscale{1.2}
\begingroup%
  \makeatletter%
  \providecommand\color[2][]{%
    \errmessage{(Inkscape) Color is used for the text in Inkscape, but the package 'color.sty' is not loaded}%
    \renewcommand\color[2][]{}%
  }%
  \providecommand\transparent[1]{%
    \errmessage{(Inkscape) Transparency is used (non-zero) for the text in Inkscape, but the package 'transparent.sty' is not loaded}%
    \renewcommand\transparent[1]{}%
  }%
  \providecommand\rotatebox[2]{#2}%
  \newcommand*\fsize{\dimexpr\f@size pt\relax}%
  \newcommand*\lineheight[1]{\fontsize{\fsize}{#1\fsize}\selectfont}%
  \ifx\svgwidth\undefined%
    \setlength{\unitlength}{42.290016bp}%
    \ifx\svgscale\undefined%
      \relax%
    \else%
      \setlength{\unitlength}{\unitlength * \real{\svgscale}}%
    \fi%
  \else%
    \setlength{\unitlength}{\svgwidth}%
  \fi%
  \global\let\svgwidth\undefined%
  \global\let\svgscale\undefined%
  \makeatother%
  \begin{picture}(1,0.86593997)%
    \lineheight{1}%
    \setlength\tabcolsep{0pt}%
    \put(0.00169876,0.40189121){\color[rgb]{0,0,0}\makebox(0,0)[lt]{\lineheight{1.25}\smash{\begin{tabular}[t]{l}T =\end{tabular}}}}%
    \put(0,0){\includegraphics[width=\unitlength,page=1]{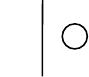}}%
  \end{picture}%
\endgroup%

	}\]
	\caption{The base case of the $1$-$1$ tangle $T$ to check for the rolling map.}
	\label{fig:a simple tangle}
\end{figure}
\end{proof}

One corollary of Theorem \ref{thm alg=top 11 tangle} is an alternative proof of the functoriality of Khovanov homology (with $\FF_2$ coefficients) for links in $S^3$. In fact, as in \cite{morrison2022invariants}, it suffices to check the front sweep and the back sweep induce the same map on Khovanov homology, which follows from Theorem \ref{thm alg=top 11 tangle} as they are both equal to the same algebraic identification $\eta^{sw}_{l(T)}$.

\begin{cor}\label{cor: functoriality in S3}
    The Khovanov homology with $\FF_2$ coefficients associates the identity map to link cobordism represented by a sequence of Reidemeister moves of the form \cite[Equation 1.1]{morrison2022invariants}, and hence, is functorial for links in $S^3$.
\end{cor}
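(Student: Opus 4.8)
The plan is to deduce the statement from the classical functoriality of Khovanov homology in $\RR^3$ together with the reduction of Morrison--Walker--Wedrich. Recall that Khovanov homology with $\FF_2$ coefficients is functorial for link cobordisms in $\RR^3\times I$ (with no residual sign, since we work over $\FF_2$), by \cite{jacobsson2004invariant,khovanov2002functor,bar2005khovanov}. By the analysis in \cite{morrison2022invariants}, upgrading this to functoriality in $S^3\times I$ amounts to a single additional check: that the movie consisting of the Reidemeister moves displayed in \cite[Equation 1.1]{morrison2022invariants} --- the sweep-around move of a strand around the point at infinity of $S^2=\RR^2\cup\{\infty\}$ --- induces the identity chain map on $\CKh$. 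So the whole task is to verify this one relation.

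First I would isotope the relevant strand into almost braid closure form via \cite[Lemma 3.1]{morrison2022invariants}, so that the ambient diagram becomes the left closure $l(T)$ of a $1$-$1$ tangle $T$, and observe that the sweep-around movie is, in the notation of this section, the composite of the front half sweep-around (taking $l(T)$ to $r(T)$) with the back half sweep-around (taking $r(T)$ back to $l(T)$). Write $A\colon\CKh(l(T))\to\CKh(r(T))$ and $B\colon\CKh(r(T))\to\CKh(l(T))$ for the two chain maps induced by these half sweeps; then the sweep-around move realizes the self-map $B\circ A$ of $\CKh(l(T))$.

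Next, by Theorem \ref{thm alg=top 11 tangle} the front half sweep map $A$ is chain homotopic to the canonical algebraic identification $\eta^{sw}_{l(T)}$. The proof of Theorem \ref{thm alg=top 11 tangle} factors the half sweep-around map through the flip map and the rolling map (Lemma \ref{lem: flip=sw+roll}) and uses that each of these is chain homotopic to the identity (Theorem \ref{thm kappa=id} and Proposition \ref{prop:rolling is trivial}); none of these inputs depends on the choice of front versus back sweep, so the same argument gives that the back half sweep map $B$ is chain homotopic to the canonical algebraic identification $\eta^{sw}_{r(T)}$ as well. Since $\eta^{sw}_{l(T)}$ and $\eta^{sw}_{r(T)}$ are mutually inverse isomorphisms of chain complexes (not merely up to homotopy), we conclude $B\circ A\simeq \eta^{sw}_{r(T)}\circ\eta^{sw}_{l(T)}=\idd_{\CKh(l(T))}$, i.e.\ the sweep-around move induces the identity. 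Combined with the reduction above, this proves functoriality in $S^3$.

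The main obstacle is not analytic but bookkeeping: one must match our conventions for $l(T)$, $r(T)$, and the two half sweep directions with the precise movie written in \cite[Equation 1.1]{morrison2022invariants}, and confirm that the reduction ``functoriality in $S^3\times I$ is equivalent to functoriality in $\RR^3\times I$ plus the sweep-around relation'' from \cite{morrison2022invariants} is genuinely a topological input that does not already presuppose the sweep-around move is trivial, so that the argument is not circular. A secondary point is to check honestly that the back-sweep analog of Theorem \ref{thm alg=top 11 tangle} follows from the identical proof; this reduces to noting that Construction \ref{con: flip map} (inserting canceling half twists), Lemma \ref{lem algebraic flipping of an elementary tangle}, and the rigidity input Lemma \ref{lem:rigidity of tangle invariants} are all symmetric under interchanging ``front'' and ``back'', and that the global sign ambiguity in Khovanov's $2$-functor is irrelevant over $\FF_2$.
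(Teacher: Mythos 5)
Your proposal is correct and follows essentially the same route as the paper: invoke the Morrison--Walker--Wedrich reduction of $S^3$-functoriality to the sweep-around relation, then apply Theorem \ref{thm alg=top 11 tangle} (whose proof is insensitive to the front/back choice) to identify both half sweep maps with the canonical algebraic identification, so their composite is homotopic to the identity. The only difference is presentational: the paper phrases the conclusion as ``front sweep and back sweep induce the same map,'' while you compose them directly and use that the two algebraic identifications are inverse chain isomorphisms; these are equivalent.
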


Theorem \ref{thm alg=top braid} now follows from Theorem \ref{thm kappa=id} and Theorem \ref{thm alg=top 11 tangle}. 

\begin{proof}[Proof of Theorem \ref{thm alg=top braid}]
    The braid rolling map can be viewed as a composition of the flip map and $k$ half sweep-around maps, where $k$ is the number of strands. The theorem then follows from the results in Theorems \ref{thm kappa=id} and \ref{thm alg=top 11 tangle}. Alternatively, one can also prove this directly using a filtration argument similar to the proof of Theorem \ref{thm kappa=id}, and we leave the details to the reader.
\end{proof}

\section{Strongly invertible knots}\label{sec: si knots}

In this section, we restrict our attention to strongly invertible knots and prove Theorem \ref{thm strg invertible same}. We also discuss how our result might help compare the invariants from \cite{lobb2021refinement} and \cite{lipshitz2024khovanov}.

Let $(K,\tau)$ be a strongly invertible knot. Let $D_t$ and $D_i$ be a transvergent diagram and an intravergent diagram of $(K,\tau)$, respectively (See Figure \ref{fig two diagrams for si knots} for an example). The involution $\tau$ induces two involutions \[\tau_{\bullet}\colon\CKh(D_\bullet)\to\CKh(D_\bullet)\,(\bullet\in\{t,i\})\] by directly applying the symmetry on each resolution. It has remained open whether these two maps are the same on homology. To compare them, we need to fix an isomorphism between these two ``models'' of the Khovanov homology of $K$, or more precisely, fix a sequence of Reidemeister moves between $D_t$ and $D_i$. We now give the precise statement of Theorem \ref{thm strg invertible same}.

\begin{thm}\label{thm si knots explicit ver}
Let $\varphi\colon \CKh(D_t)\to\CKh(D_i)$ be a chain homotopy equivalence induced by a fixed sequence of Reidemeister moves from $D_t$ to $D_i$. Then the following diagram commutes up to homotopy:

\begin{equation*}
    \begin{tikzcd}
     \CKh(D_t)\ar[r,"\tau_t"] \ar[d,"\varphi"]& \CKh(D_t)\ar[d,"\varphi"] \\
     \CKh(D_i) \ar[r,"\tau_i"]& \CKh(D_i)
\end{tikzcd}.
\end{equation*}
\end{thm}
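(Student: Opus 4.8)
The plan is to factor both algebraic involutions $\tau_t$ and $\tau_i$ through the cobordism map of a \emph{single} cobordism in $S^3\times I$ --- the trace of the rotation isotopy about the symmetry axis --- and then conclude by naturality. Write $A\subset S^3$ for the axis of $\tau$ (its fixed circle), let $\{\tau_s\}_{s\in[0,1]}$ be the isotopy of $S^3$ rotating by angle $\pi s$ about $A$, so that $\tau_0=\idd$ and $\tau_1=\tau$, and let $\Sigma\subset S^3\times I$ be the trace of $\{\tau_s(K)\}$, a cobordism from $K$ to itself. By functoriality of Khovanov homology for cobordisms in $S^3\times I$ (cf. \cite{morrison2022invariants}), the induced map is well defined up to chain homotopy and natural under Reidemeister moves; concretely, if $R^\Sigma_{D_t}\colon\CKh(D_t)\to\CKh(D_t)$ and $R^\Sigma_{D_i}\colon\CKh(D_i)\to\CKh(D_i)$ denote the maps computing $\CKh(\Sigma)$ from the two diagrams, then $\varphi\circ R^\Sigma_{D_t}\simeq R^\Sigma_{D_i}\circ\varphi$. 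It therefore suffices to prove $\tau_t\simeq R^\Sigma_{D_t}$ and $\tau_i\simeq R^\Sigma_{D_i}$, after which the desired square commutes by $\varphi\circ\tau_t\simeq\varphi\circ R^\Sigma_{D_t}\simeq R^\Sigma_{D_i}\circ\varphi\simeq\tau_i\circ\varphi$.

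\textbf{The transvergent side.} Here $A$ lies in the projection plane $P$, and $\tau$ acts on $D_t$ by reflecting across $A\cap P$ and reversing every crossing; thus $\tau(D_t)=D_t^*$, and the symmetry of $D_t$ forces $D_t^*=D_t$. Unwinding definitions, the algebraic involution $\tau_t$ ($\tau$ acting on the cube of resolutions) is, under this identification, exactly the flip identification $\eta_{D_t}$ of Section \ref{sec:KhI}: both reverse every circle and relabel resolutions by the reflection, and over $\FF_2$ there is no sign to track. Reading $\Sigma$ off from $D_t$ presents it as the flip cobordism, so $R^\Sigma_{D_t}\simeq R_{D_t^*}$ --- the two directions of $\pi$-rotation inducing homotopic maps because they differ by a $2\pi$-rotation, which is the identity by Lemma \ref{lem: rotating by 2pi}. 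Now Theorem \ref{thm kappa=id} gives $\kappa_{D_t}=R_{D_t^*}\circ\eta_{D_t}\simeq\idd$, and since $\eta_{D_t}^2=\idd$ on the nose we obtain $R^\Sigma_{D_t}\simeq R_{D_t^*}\simeq\eta_{D_t}=\tau_t$.

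\textbf{The intravergent side.} Here $A$ is perpendicular to $P$; after deleting the point of $A$ at infinity, each $\tau_s$ is a rotation of $\RR^3$ about a line, which commutes with the projection to $P$ and acts there as the planar rotation by angle $\pi s$. Hence the family $\{\tau_s(K)\}$ projects to a \emph{rigid planar isotopy} of $D_i$ --- no crossings are created and no crossing types change --- returning to $D_i$ at $s=1$ precisely because $D_i$ is invariant under planar rotation by $\pi$. The cobordism map of a planar isotopy of a diagram onto itself is the tautological identification of cubes of resolutions obtained by tracking where crossings and circles are carried; here that identification is exactly the action of $\tau$ on the cube, so $R^\Sigma_{D_i}=\tau_i$. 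Chaining the three homotopies completes the proof.

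I expect the main obstacle to be the last identification $R^\Sigma_{D_i}=\tau_i$: one must check carefully that rotating about the perpendicular axis is genuinely realized by a planar isotopy of the \emph{chosen} diagram (rather than requiring auxiliary Reidemeister moves), and that the cobordism map of a planar self-isotopy is the combinatorial permutation map $\tau_i$ and not the identity --- that is, one must keep explicit track of the crossing-relabelling permutation the rotation induces. Nailing down the identification $\tau_t=\eta_{D_t}$ in the transvergent case (the conventions for the $0$- and $1$-smoothings under crossing reversal) is a secondary point, but routine over $\FF_2$.
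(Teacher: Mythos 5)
Your argument is correct and follows essentially the same route as the paper: identify $\tau_t$ with the flip identification $\eta_{D_t}$ and hence, via Theorem \ref{thm kappa=id}, with the $\pi$-rotation cobordism map; identify $\tau_i$ with the same rotation cobordism read as a planar isotopy of $D_i$; and conclude by functoriality, since the rotation trace and the cobordism $W$ realized by the fixed Reidemeister sequence commute up to isotopy rel boundary. The caveats you flag (direction of rotation via Lemma \ref{lem: rotating by 2pi}, and tracking the crossing-relabelling in the intravergent case) are exactly the points the paper treats implicitly, so no gap remains.
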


The idea of the proof is to replace the algebraic involutions with their topological counterparts, which have better functoriality available. 

\begin{proof}
    Notice that the transvergent diagram $D_t$ can be identified (setwise) with its flip diagram $D_t^*$. Therefore, the algebraic involution $\tau_t$ is the same as the $\eta$ map for general knot diagrams, defined in Section \ref{sec:KhI}. Therefore, by Theorem \ref{thm kappa=id}, $\tau_t$ is chain homotopic to a map $R_t\colon\CKh(D_t)\to\CKh(D_t)$ induced by a sequence of Reidemeister moves that realizes the trace cobordism of rotating $K$ about the axis by $\pi$. 

    On the other hand, for the intravergent diagram $D_i$, $\tau_i$ is actually the same as $R_i$, the map induced by the planar isotopy that rotates $D_i$ through the axis by $\pi$. Denote the link cobordism corresponding to the fixed sequence of Reidemeister moves from $D_t$ to $D_i$ by $W$. The compositions $W\circ S_K$ and $S_K\circ W$ are isotopic rel boundary. By the functoriality of Khovanov homology, the following diagram commutes up to homotopy: \begin{equation*}
    \begin{tikzcd}
     \CKh(D_t)\ar[r,"R_t"] \ar[d,"\varphi"]& \CKh(D_t)\ar[d,"\varphi"] \\
     \CKh(D_i) \ar[r,"R_i"]& \CKh(D_i)
\end{tikzcd},
\end{equation*}which completes the proof.
\end{proof}

 We expect Theorem \ref{thm si knots explicit ver} can be strengthened to a comparison result between the equivariant Khovanov homology from two types of diagrams. In general, an equivariant version of the invariant is desirable when the topological objects carry additional symmetries. In the context of Khovanov homology, Lipshitz and Sarkar \cite{lipshitz2024khovanov} constructed a $(\ZZ)$-equivariant Khovanov stable homotopy type for strongly invertible knots with intravergent diagrams using the machinery of Stoffregen--Zhang \cite{stoffregen2024localization}. This allows us to discuss equivariant versions of Khovanov homology for strongly invertible knots with intravergent diagrams.

For transvergent diagrams, while an equivariant stable homotopy lift of Khovanov homology seems intractable at the moment, one can still consider the Borel complex \begin{equation}\label{eqn: Borel for transvergent}
    \CKh^*_{\text{Borel}}(D_t)\coloneqq \left(\CKh(D_t)\otimes\FF[Q],d_{\Kh}+Q(1+\sigma_t)\right),
\end{equation}where $Q$ is a formal variable of bigrading $(1,0)$. The cohomology of this complex should be thought of as a (Borel) equivariant Khovanov homology\footnote{A general chain model for $(\ZZ)$-equivariant Borel cohomology would have components from higher homotopies in the differential; in our case, they all vanish as $\sigma_t$ is an honest (not just homotopy) involution.} of the strongly invertible knot. This Borel construction has recently been studied (in particular, proved to be an invariant of the strongly invertible knot type) in \cite{borodzik2025khovanov}. It specifies to Lobb--Watson's refinement of Khovanov homology \cite{lobb2021refinement} and Sano's involutive Khovanov homology for strongly invertible knots \cite{sano2024involutive} when setting $Q=1$ and $Q^2=0$ in (\ref{eqn: Borel for transvergent}) respectively.

We conjecture that two versions of equivariant Khovanov homology for strongly invertible knots from \cite{lipshitz2024khovanov} and \cite{borodzik2025khovanov} coincide.

\begin{conj}\label{prop: equivariant si same}
    The Borel equivariant Khovanov chain complexes from the intravergent diagram and the transvergent diagram of a strongly invertible knot are quasi-isomorphic. 
\end{conj}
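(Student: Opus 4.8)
The plan is to promote the chain homotopy equivalence $\varphi\colon\CKh(D_t)\to\CKh(D_i)$ produced in Theorem \ref{thm si knots explicit ver} to an honest $\FF[Q]$-linear quasi-isomorphism of the two Borel complexes. Since $\tau_t$ and $\tau_i$ come from genuine $\ZZ$-actions on the cubes of resolutions, they are honest (not merely homotopy) involutions, so each Borel complex is a bona fide dg module over $\FF[Q]$ — a finite complex of $\FF[\ZZ]$-modules whose Borel cohomology computes $\ZZ$-equivariant (group) cohomology — and $(d_{\Kh}+Q(1+\tau_t))^2=Q^2(1+\tau_t)^2=0$ over $\FF_2$, and likewise for $D_i$. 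By a standard $Q$-adic spectral sequence argument (the filtration by powers of $Q$ is exhaustive, degreewise finite-dimensional, and the associated graded complex carries only $d_{\Kh}$), it suffices to build an $\FF[Q]$-linear chain map
\[
\Phi\;=\;\sum_{j\ge 0}Q^j\gamma_j\colon\ \CKh^*_{\text{Borel}}(D_t)\longrightarrow\CKh^*_{\text{Borel}}(D_i),\qquad\gamma_0=\varphi,
\]
with $\gamma_j\colon\CKh(D_t)\to\CKh(D_i)$ of homological degree $-j$: such a $\Phi$ reduces to $\varphi$ modulo $Q$, hence induces an isomorphism on associated graded homology, hence is a quasi-isomorphism.

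Expanding $(d_{\Kh}+Q(1+\tau_i))\Phi=\Phi(d_{\Kh}+Q(1+\tau_t))$ in powers of $Q$ gives the obstruction recursion
\[
d_{\Kh}\gamma_j+\gamma_j d_{\Kh}\;=\;\gamma_{j-1}(1+\tau_t)+(1+\tau_i)\gamma_{j-1}\qquad(j\ge 0,\ \gamma_{-1}=0).
\]
For $j=0$ this just says $\varphi$ is a chain map; for $j=1$ the right-hand side is $\varphi\tau_t+\tau_i\varphi$, which is a boundary in $\Homm(\CKh(D_t),\CKh(D_i))$ exactly by the proof of Theorem \ref{thm si knots explicit ver}, so $\gamma_1$ exists. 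For $j\ge 2$, using $(1+\tau_t)^2=(1+\tau_i)^2=0$ over $\FF_2$ together with the previous stage of the recursion, one checks that $\omega_j\coloneqq\gamma_{j-1}(1+\tau_t)+(1+\tau_i)\gamma_{j-1}$ is automatically a cycle; its class lies in $\Homm_{\FF}(\Kh(K),\Kh(K))$ in homological degree $-(j-1)$, and the content of the conjecture is precisely that, with consistent choices of the $\gamma_j$, all these higher obstruction classes vanish. (This holds trivially whenever every self-chain-map of $\CKh(K)$ of negative homological degree is null-homotopic, e.g.\ when $K$ is the unknot, where $\Phi=\varphi$ already works.)

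There are two natural routes to kill the $\omega_j$, and seeing why neither is automatic pinpoints the difficulty. The first is geometric: $\Phi$ ought to be induced by the equivariant isotopy carrying $D_t$ to $D_i$, i.e.\ it would follow from a sufficiently homotopy-coherent $\ZZ$-equivariant refinement of the functoriality of Khovanov homology — but no such refinement is known for transvergent diagrams (this is the gap flagged in the discussion of \eqref{eqn: Borel for transvergent}), and any equivariant isotopy from a transvergent to an intravergent diagram passes through symmetric three-dimensional configurations that project to non-symmetric diagrams, so the cube-of-resolutions model of the $\ZZ$-action degenerates along the way. The route I would actually pursue is to adapt the tangle-decomposition and rigidity machinery of Section \ref{sec:flip map}: cut $D_t$ and $D_i$ into elementary tangles, assemble a filtered model of $\Phi$ out of $\varphi$ and homotopies by repeated application of Lemma \ref{lem:rigidity of tangle invariants}, and force the filtered pieces; then use that $\tau_t\simeq R_t$ and $\tau_i=R_i$ both realize the single cobordism $S_K$, with $S_K\circ S_K$ inducing the identity by Lemma \ref{lem: rotating by 2pi}, and combine this with an equivariant version of the bordered-type argument of Alishahi--Truong--Zhang \cite{alishahi2023khovanov}. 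The main obstacle is that Khovanov's rigidity applies to the tangle invariants $\caf(a)$ of invertible tangles, whereas what is needed here is an $\FF[Q]$-linear (``Borel'') strengthening of that rigidity for the complexes $\caf(a)\otimes\FF[Q]$ equipped with the equivariant perturbation — it is exactly the absence of this strengthening that leaves the $\omega_j$ uncontrolled, and establishing such an equivariant rigidity statement for braids (where $\caf(\beta)$ is Kh-simple) is the step I expect to be the essential difficulty.
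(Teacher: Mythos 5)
Be aware that the statement you are addressing is stated in the paper as a \emph{conjecture}: the paper gives no proof, only an explanation of why the obvious strategy fails, and your proposal -- as you candidly admit in its last sentences -- does not close that gap either. The parts you do carry out are correct and essentially reproduce the paper's own discussion in a cleaner obstruction-theoretic form: since $\tau_t$ and $\tau_i$ are honest involutions commuting with $d_{\Kh}$, the Borel complexes are genuine $\FF[Q]$-dg modules; the $Q$-adic filtration is degreewise bounded (because $\CKh$ is bounded), so it suffices to build $\Phi=\sum_j Q^j\gamma_j$ with $\gamma_0=\varphi$; the recursion you write down is the right one; the $j=1$ step is exactly the homotopy $H$ supplied by Theorem \ref{thm si knots explicit ver} (this is the map $\varphi+H$ the paper considers); and your check that each $\omega_j$ is a cycle is valid over $\FF_2$.

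The genuine gap is that the conjecture \emph{is} the assertion that the higher obstruction classes $[\omega_j]$, $j\ge 2$, can be killed by consistent choices of the $\gamma_j$, and neither of your proposed routes establishes this. The geometric route is precisely what the paper identifies as missing: making $\gamma_1=H$ compatible with Reidemeister-move maps already requires the (unknown) second-order naturality of Khovanov homology, and controlling the full tower amounts to lifting $\CKh$ to an $(\infty,1)$-functor. Your alternative route -- an $\FF[Q]$-linear ``equivariant rigidity'' strengthening of Lemma \ref{lem:rigidity of tangle invariants} -- is only named, not proved, and it is not a formal adaptation of Section \ref{sec:flip map}: that lemma controls degree-zero homotopy equivalences between invertible tangle invariants, whereas the classes $[\omega_j]$ live in negative-degree Hom-homology of the glued complexes $\Homm(\CKh(D_t),\CKh(D_i))$, which is generally nonzero and which the filtered/rigidity arguments of the paper never need to control (there, preservation of the cubical grading plus the homological grading constraint did all the work, a mechanism with no analogue here because the $\gamma_j$ have strictly negative degree). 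So your write-up is a correct reduction of the conjecture to an obstruction-vanishing problem, consistent with the paper's discussion, but it is not a proof of the statement.
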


The missing ingredient in proving Conjecture \ref{prop: equivariant si same} is higher order naturality of Khovanov homology, as we now explain. After fixing a chain homotopy equivalence $\varphi\colon \CKh(D_t)\to\CKh(D_i)$ induced by a sequence of Reidemeister moves, we can add a chain homotopy \[H\colon \CKh(D_t)\to Q\cdot\CKh(D_i)\]to make the homotopy commutative diagram in Theorem \ref{thm si knots explicit ver} commute on the nose. However, the map $\Phi\colon \CKh^*_{\text{Borel}}(D_t)\to\CKh^*_{\text{Borel}}(D_i)$, given by the sum of the vertical map $\varphi$ and the homotopy $H$ in the following diagram, is not necessarily a chain map:\begin{equation*}
    \begin{tikzcd}[row sep=3em, column sep = 5em]
     \CKh(D_t)\ar[r,"Q(1+\tau_t)"] \ar[d,"\varphi"] \ar[rd,"H"]& Q\CKh(D_t)\ar[d,"\varphi"]\ar[r,"Q(1+\tau_t)"] \ar[rd,"H"]& Q^2\CKh(D_t)\ar[d,"\varphi"]\ar[r] & \dots\\
     \CKh(D_i) \ar[r,"Q(1+\tau_i)"]& Q\CKh(D_i) \ar[r,"Q(1+\tau_t)"]& Q^2\CKh(D_i) \ar[r]&\dots
\end{tikzcd}.
\end{equation*}The issue is the homotopy $H$ may not commute with maps induced by Reidemeister moves. In fact, it is more reasonable to expect they should commute up to homotopy, which already requires the (currently unavailable) second order naturality of Khovanov homology. Even under this assumption, one still needs to worry about whether the new homotopy that witnesses the commutativity of the previous diagram commutes (up to homotopy) with the differentials, and so on. In summary, to construct the desired quasi-isomorphism, it seems to require a lift of Khovanov homology as an $(\infty,1)$-functor. 

One can also consider the Tate complex \[ \CKh^*_{\text{Tate}}(D_t)\coloneqq \left(\CKh(D_t)\otimes\FF[Q,Q^{-1}],d_{\Kh}+Q(1+\sigma_t)\right).\]As a double complex with the horizontal differential $Q(1+\sigma_t)$ and vertical differential $d_{\Kh}$, it gives rise to two spectral sequences. Following the notation in \cite{lipshitz2016noncommutative}, we denote the one which first computes the homology with respect to the differential $d_{\Kh}$ by $^{vh}E$, and the other one by $^{hv}E$. When setting $Q=1$, they should specify to the $\caf$ and $\cag$ spectral sequences of Lobb--Watson \cite[Theorem 1.1]{lobb2021refinement}. 

It would be interesting to compare the Tate complexes from transvergent and intravergent diagrams. Conjecture \ref{prop: equivariant si same} would imply that two $^{vh}E$ spectral sequences coincide. However, the conjectural quasi-isomorphism there is not filtered with respect to the vertical filtration, so it would not give the desired quasi-isomorphism between the $E^3$ pages. See Figure \ref{fig: tate ss} for illustration.

\begin{figure}[ht]
\centering
\begin{tikzcd}[row sep=1em, column sep=1em]
  &  Q^{l}\CKh^{k+1}(D_i) \arrow[rr] \arrow[from=dd] & & Q^{l+1}\CKh^{k+1}(D_i)   \\
 Q^{l}\CKh^{k+1}(D_t)\arrow[rr, crossing over]\arrow[ur,"\varphi"] \arrow[drrr, dashed, crossing over] & & Q^{l+1}\CKh^{k+1}(D_t) \arrow[ur,"\varphi"']& \\
  & Q^{l}\CKh^{k}(D_i) \arrow[rr]  & & Q^{l+1}\CKh^{k}(D_i) \arrow[uu] \\
 Q^{l}\CKh^{k}(D_t) \arrow[uu]\arrow[rr]\arrow[ur,"\varphi"] & & Q^{l+1}\CKh^{k}(D_t)\arrow[ur,"\varphi"']\arrow[uu, crossing over]
\end{tikzcd}
\caption{Part of the comparison map between two Tate complexes. The vertical arrows are ordinary Khovanov differentials, and the horizontal arrows are Tate differentials $Q(1+\tau_t)$ (or $Q(1+\tau_i)$). The dashed arrow is the homotopy $H$ defined above, which respects the horizontal filtration but not the vertical filtration.}
\label{fig: tate ss}
\end{figure}
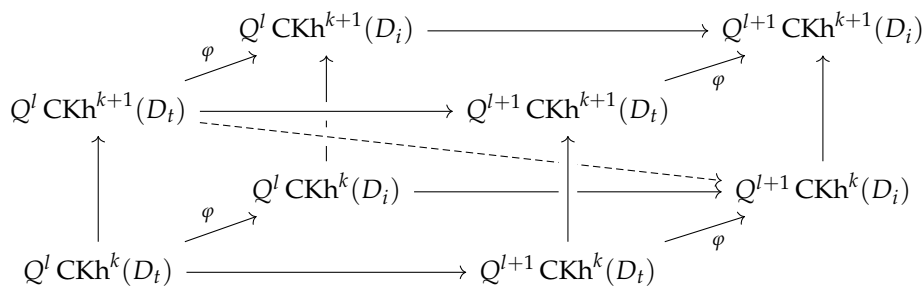

\begin{quest}
    Do the $^{hv}E$ spectral sequences from the Tate complexes of transvergent and intravergent diagrams coincide with each other?
\end{quest}

A positive answer to this question would provide an approach to \cite[Question 6.5]{lobb2021refinement} that if the $\cag$ spectral sequence collapses at the $E^3$ page, and hence prove a Smith-type inequality for Khovanov homology of the transvergent diagram. Again, this seems to require progress on higher order naturality of Khovanov homology.

\section{Discussion}\label{sec: discussions}

\subsection{Comparison to involutive Heegaard Floer homology}

As previously alluded, many constructions appearing in Section \ref{sec:flip map} turn out to be formally similar to the ones in the work of Alishahi--Truong--Zhang \cite{alishahi2023khovanov}, where they used bordered Floer techniques to establish a spectral sequence relating (a truncated version of) the reduced Bar-Natan homology of a knot and the involutive Heegaard Floer homology of its branched double cover. In this subsection, we discuss the similarities and relationship between these two theories in further detail.

One of the key observations in the proof of Theorem \ref{thm kappa=id} is to realize the flip cobordism by half twists. This should be compared to the presence of the Auroux--Zarev piece \cite{auroux2010fukaya,zarev2010joining} in the involutive Heegaard Floer picture. It serves as an interpolating piece that swaps $\alpha$ and $\beta$ curves, which is exactly what the involutive map $\iota$ on Heegaard Floer chain complexes does. It is therefore reasonable to believe the branched double cover of a half twist is related to the corresponding Auroux--Zarev piece, cf. \cite[Proposition 4.2]{lipshitz2011heegaard}. One issue is that the branched double cover of a half twist is $\alpha$-$\alpha$-bordered, while the Auroux--Zarev piece is an $\alpha$-$\beta$-bordered.

We can also compare the proofs of Theorem \ref{thm kappa=id} and \cite[Theorem 4.1]{alishahi2023khovanov}. The maps in (\ref{eqn decompose kappa}) are formally identical to \cite[Equation 3]{alishahi2023khovanov}. We are intentionally using similar letters to denote them: $\eta$ is the canonical (algebraic) identification between Khovanov tangle (resp. bordered Floer) invariants, $\Omega$ is the chain homotopic equivalence that introduces modules of pairs of half twists (resp. Auroux--Zarev pieces), and $\rho$ (resp. $\Psi$) is the map induced by Reidemeister moves (resp. Heegaard moves). Lemma \ref{lem algebraic flipping of an elementary tangle} can be thought as a Khovanov analog of \cite[Lemma 7.2]{hendricks2019involutive}, and the rigidity result used therein should be compared to \cite[Section 4]{hendricks2019involutive},  albeit the latter handles substantially more technical challenges. Finally, while both $\kappa$ and $\iota$ are chain homotopic to maps that are filtered with respect to the cubical grading, the homological grading is related to the cubical grading in Khovanov side, which differs from the Heegaard Floer setting and makes the triviality argument only valid in our setting.

It is still natural to ask if involutive Khovanov homology is related to Heegaard Floer homology through the Ozsv\'ath--Szab\'o's branched double cover spectral sequence \cite{ozsvath2005heegaard,lipshitz2014bordered,lipshitz2016bordered}. 

\begin{quest}
    How does the flip map on Khovanov homology interact with the Ozsv\'ath--Szab\'o spectral sequence?
\end{quest}

\subsection{Variants of Khovanov homology}
\label{sec:Bar-Natan deformation}
It would be interesting to generalize the results in this paper to other variants of Khovanov homology. In particular, we expect the key technical argument, Lemma \ref{lem algebraic flipping of an elementary tangle}, remains valid with some mild modifications. In this case, the flip map would again be completely determined by its behavior on unlinks. However, the flip map is generally not completely trivial on unlinks, as the following example suggests.

Consider the (graded) Bar-Natan homology with $\FF_2$ coefficients \cite{bar2005khovanov}. It assigns an $\FF[H]$-module $\BN(K)$ for a knot $K$ from the Frobenius algebra \[A_{\text{BN}}=\FF[H][X]/(X^2=HX).\] Denote $X + H\cdot1$ by $Y$. Let $U$ be the unknot. Using the sequence of Reidemeister moves illustrated in Figure \ref{fig unknot kappa}, one can compute that $\kappa_*\colon\BN(U)\to\BN(U)$ is given by $\kappa_*(1)=1,\,\kappa_*(X)=Y$. It is easy to check the $\FF[H]$-linear map \[\sigma\colon A_{\text{BN}}\to A_{\text{BN}},\,\sigma(1)=1,\,\sigma(X)=Y \text{ (and hence } \sigma(Y)=X)\]is an automorphism of Frobenius algebra, and it gives an involution on Bar-Natan chain complexes by applying $\sigma$ to all components of generators. In the spirit of Theorem \ref{thm kappa=id}, we have the following proposition. 

\begin{prop}\label{prop: flip for BN}
     Let $D$ be a diagram for a knot $K\subset S^3$. The flip map $\kappa\colon\CBN(D)\to\CBN(D)$ is  chain homotopic to the involution induced by the automorphism $\sigma$.
\end{prop}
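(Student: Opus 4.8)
The plan is to run the same filtration-and-rigidity argument used in the proof of Theorem \ref{thm kappa=id}, but now over the Bar-Natan tangle invariants $\caf_{\text{BN}}(T)$ of \cite{bar2005khovanov}, and to identify the resulting filtered-diagonal map with the automorphism-induced involution by computing on crossingless tangles. First I would record the Bar-Natan analogs of the topological lemmas: Lemmas \ref{lem flip tangles} and \ref{lem flip k cups} are purely diagrammatic statements about Reidemeister moves and hence hold verbatim, so Construction \ref{con: flip map} produces, for any diagram $D$ that is a plat closure of a $2k$-strand braid with $n$ crossings, a sequence of Reidemeister moves from $D^*$ to $D$ realizing the flip cobordism, and the decomposition analogous to Equation (\ref{eqn decompose kappa}) with $\caf$ replaced by $\caf_{\text{BN}}$, the tensor products taken over the Bar-Natan arc algebra $H^k_{\text{BN}}$. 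The maps $\eta$, $\Omega$, $\rho_0$ all preserve the cubical $\{0,1\}^n$-grading for the same reason as before.

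Next I would establish the Bar-Natan version of Lemma \ref{lem algebraic flipping of an elementary tangle}. As the remark after Corollary \ref{coro: KhI trivial} (and \cite{willis2021khovanov}*{Corollary~2.14}) indicates, the key inputs survive: the Reidemeister-I and II maps appearing in $R_0$ and $R_1$ are still very strong deformation retracts, so the homological perturbation lemma applies to replace $\rho$ with a filtered map $\rho'$; and the rigidity statement, Lemma \ref{lem:rigidity of tangle invariants}, has its Bar-Natan counterpart phrased via \textit{Kh-simple} tangles in \cite{bar2005khovanov}*{Section~8.3}, so (working over $\FF_2$, where there is no sign ambiguity) any two homotopy equivalences $\caf_{\text{BN}}(\Delta_{2k}^{-1}\circ T^*\circ\Delta_{2k})\to\caf_{\text{BN}}(T)$ are chain homotopic. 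Since braids are invertible, the concatenation hypotheses hold. Thus $\kappa$ on $\CBN(D)$ is chain homotopic to a map $\kappa'$ that preserves the cubical grading (the homological degree of $\kappa$ is zero, and homological degree is the cubical degree up to a fixed shift), whose diagonal piece on each vertex $v$ is the flip map $\kappa_{D_v}$ of the crossingless unlink diagram $D_v$, computed in the Bar-Natan theory.

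It then remains to compute $\kappa$ for crossingless unlink diagrams in the Bar-Natan setting. For the unknot with the diagram of Figure \ref{fig unknot kappa}, I would carry out the explicit computation over $A_{\text{BN}}=\FF[H][X]/(X^2=HX)$: the Reidemeister-II map, two Reidemeister-I maps, and planar isotopy compose to the stated map $\kappa_*(1)=1$, $\kappa_*(X)=X+H\cdot 1$, i.e. the Frobenius-algebra automorphism $\sigma$. For a general crossingless unlink diagram $\mathfrak D$ with an arbitrary position relative to the axis, the naturality argument of Example \ref{exmp unlink kappa} applies unchanged: $\CBN(D)$ and $\CBN(\mathfrak D)$ are concentrated in homological degree $0$, so the homotopy $H$ in the naturality square vanishes and $\varphi$ is an honest isomorphism, giving $\kappa_{\mathfrak D}=\varphi\circ\kappa_D\circ\varphi^{-1}$; since $\varphi$ is built from Reidemeister moves between crossingless diagrams it intertwines the componentwise $\sigma$, so $\kappa_{\mathfrak D}$ is again the $\sigma$-induced involution (and on $n$-component unlinks it is $\sigma^{\otimes n}$ up to the canonical identification of components). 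Feeding this into the filtered argument, each diagonal piece $\kappa'|_{D_v}$ equals the $\sigma$-induced involution on $\CBN(D_v)$, hence $\kappa\simeq\sigma^{\otimes(\cdot)}$ on all of $\CBN(D)$, which is the statement. Finally, for $D$ not a plat closure of a braid one reduces as in the last paragraph of the proof of Theorem \ref{thm kappa=id} (cf. \cite[Lemma 3.2]{morrison2022invariants}). The main obstacle I anticipate is verifying carefully that the homological perturbation / rigidity machinery genuinely goes through over $\FF[H]$ rather than a field — in particular that $\caf_{\text{BN}}(a)$ for an invertible tangle $a$ is Kh-simple in the precise sense needed and that the deformation retracts are ``strong'' in the $\FF[H]$-linear sense — since $\CBN$ is a module, not a vector space, so one cannot invoke dimension counts; but \cite{bar2005khovanov}*{Section~8.3} and \cite{willis2021khovanov} already do the relevant bookkeeping, and the rest is a routine transcription of Section \ref{sec:flip map}.
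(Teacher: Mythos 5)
Your proposal is correct and follows essentially the same route as the paper: adapt Lemma \ref{lem algebraic flipping of an elementary tangle} to Bar-Natan's formalism (via Kh-simplicity and the very strong deformation retract argument), run the cubical-filtration argument so the perturbed flip map preserves the $\{0,1\}^n$-grading, and identify its diagonal pieces with $\sigma$ via the explicit unknot/crossingless-unlink computation. The only point the paper spells out more explicitly is that both your intertwining claim for $\varphi$ and the final reduction from plat closures to arbitrary diagrams rest on the fact that $\sigma$, being a Frobenius-algebra automorphism, commutes on the nose with all elementary cobordism maps and hence with every map induced by Reidemeister moves.
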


\begin{proof}
    Lemma \ref{lem algebraic flipping of an elementary tangle} remains valid with Bar-Natan's tangle invariants formalism. The proof then proceeds similarly to Theorem \ref{thm kappa=id} with a few minor modifications. First, while the perturbed map $\kappa'$ preserves the $\{0,1\}^n$-grading, it is not the identity map but rather the map induced by $\sigma$, as this is the case for unlinks. Second, after establishing the claim for plat closures of braids, the general case follows the fact that the following diagram commutes on the nose:
    \begin{equation*}
    \begin{tikzcd}
     \CBN(D)\ar[r,"\sigma"] \ar[d,"\varphi"]& \CBN(D)\ar[d,"\varphi"] \\
     \CBN(D') \ar[r,"\sigma"]& \CBN(D')
    \end{tikzcd},
    \end{equation*}where $D$ and $D'$ are diagrams of $K$, and $\varphi$ is induced by a sequence of Reidemeister moves between $D$ and $D'$. In the proof of Theorem \ref{thm kappa=id}, a similar claim was implicit in the last paragraph of the proof, where the identity map obviously commutes with $\varphi$. In the Bar-Natan homology setting, $\sigma$ commutes with all elementary cobordisms, as it is an automorphism of Frobenius algebra. Therefore, it commutes with all cobordisms involved when performing Reidemeister moves (cf. \cite[Section 4]{bar2005khovanov}), which completes the proof.
\end{proof}

While Proposition \ref{prop: flip for BN} asserts that $\kappa$ is governed by the intrinsic structure of the Frobenius algebra, one can still define \textit{involutive Bar-Natan complex} $ \CBNI(D)$ of a knot diagram $D$ as
\[\CBNI(D) := \Cone\left(\CBN(D) \xrightarrow{Q(\idd+\sigma)}Q\CBN(D)\right),\]
where $Q$ is a formal variable with quantum grading $0$ and homological grading $1$. Unfortunately, it again turns out that involutive Bar-Natan homology carries no further information, as indicated by the following proposition, which was kindly communicated to us by Taketo Sano.

\begin{prop}[Sano]
    For a knot diagram $D$, we have the following graded chain homotopy equivalence as a chain complex of $\FF[H]$-modules:
    \[\CBNI(D) \simeq \widetilde{\CBN}(D) \oplus Q \widetilde{\CBN}(D) \oplus Q\widetilde{CKh}(D),\] where $\widetilde{\CBN}(D) $ is the reduced Bar-Natan complex, and $\widetilde{CKh}(D)$ is the reduced Khovanov complex.
    \label{prop:BNI trivial}
\end{prop}
\begin{proof}
    Choose a base point on $D$, and let $C_X$ (resp. $C_Y$) be the subcomplex of $\CBN(D)$ such that the component of the state on the marked circle in each resolution equal $X$ (resp. $Y$). The base point also endows $\CBN(D)$ an $A_{\text{BN}}$-module structure, where the action is given by merging an unknot to $D$ at the base point.  Define the map $\upsilon\colon\CBN(D) \to \CBN(D)$ by 
\[\upsilon\coloneqq H^{-1}(\idd+\sigma),\]
(in particular, $\idd + \sigma$ is divisible by $H$ cf. \cite[Lemma 2.2]{khovanov2025symmetries}). By \cite[Proposition 2.9, Remark 2.10]{khovanov2025symmetries}, we have two splitting short exact sequences of chain complexes of $\FF[H]$-modules:
\[
\begin{tikzcd}
     0\ar[r] & C_X\ar[r] & \CBN(D) \ar[r, "\cdot Y"] & C_Y \ar[r] \ar[l,bend left=30, end anchor = south east, "\upsilon"]&0,  \\
      0\ar[r] & C_Y\ar[r] & \CBN(D) \ar[r, "\cdot X"] & C_X \ar[r] \ar[l,bend left, end anchor = south east,"\upsilon"]&0 ,
    \end{tikzcd}
\]
where $\cdot X$ and $\cdot Y$ are multiplication by $X$ and $Y$ at the base point respectively. Hence, we have direct sum splittings\begin{equation}\label{eqn:CBN splits}
    \CBN(D) \cong C_X \oplus \upsilon (C_Y )\cong C_Y \oplus \upsilon (C_X).
\end{equation}
Note the following relations between the maps:
\[\idd +\sigma = H\upsilon, \quad \upsilon^2=0.\]
Then we can rewrite the involutive Bar-Natan chain complex $\CBNI(D)$ using the splittings in (\ref{eqn:CBN splits}) as\[\CBNI(D) \cong \Cone\left(C_X \oplus \upsilon(C_Y) \xrightarrow{\left(\begin{smallmatrix} 0 & 0\\ QHv & 0 \end{smallmatrix}\right)} QC_Y \oplus Q\upsilon(C_X) \right),\]
Hence, as $\FF[H]$-modules, we have
\[\CBNI(D) \cong \upsilon(C_Y) \oplus QC_Y \oplus \Cone \left(C_X \xrightarrow{QHv}Qv(C_X)\right),\]
where $\upsilon(C_Y)$ and $C_Y$ are both isomorphic to the reduced Bar-Natan chain complex \cite{WigdersonBarNatan}, and the rest direct summand is chain homotopic to the reduced Khovanov complex, as we mod out the image of multiplication by $H$.
\end{proof}

Another direction is to extend Theorem \ref{thm kappa=id} to $\Z$-coefficients. It is observed that the flip map is also not the identity map on unlinks for $\Z$-coefficients (cf. \cite[Remark 3.8]{lipshitz2022mixed}). However, this observation seems to be a bug instead of a feature of the ordinary Khovanov cobordism maps. To correct the signs, it would be better to switch to the language of $\mathfrak{gl}_2$ webs and foams \cite{blanchet2010oriented}, which naturally leads to a similar question to Khovanov--Rozansky $\mathfrak{sl}_N$ homology \cite{khovanov2008matrix,ehrig2018functoriality}. We expect the flip map is always identity in this setting, and we plan to explore this in future work. 
	
\bibliographystyle{amsalpha}
\bibliography{ref}

\end{document}